\newlength{\fixboxwidth}
\newcommand{\re}{\mathbb{R}}\newcommand{\N}{\mathbb{N}}
\newcommand{\zz}{\mathbb{Z}}
\newcommand{\com}{\mathbb{C}}
\newcommand{\R}{{\re}^d}
\newcommand{\cs}{{\mathcal S}}
\newcommand{\cl}{{\mathcal L}}
\newcommand{\cf}{{\mathcal F}}
\newcommand{\cfi}{{\cf}^{-1}}
\newcommand{\supp}{{\rm supp \, }}
\newcommand{\dist}{{\rm dist \, }}
\newcommand{\be}{\begin{equation}}
\newcommand{\ee}{\end{equation}}
\newcommand{\beq}{\begin{eqnarray}}
\newcommand{\beqq}{\begin{eqnarray*}}
\newcommand{\eeq}{\end{eqnarray}}
\newcommand{\eeqq}{\end{eqnarray*}}
\newtheorem{satz}{Theorem}
\newtheorem{rem}{Remark}
\newtheorem{defi}{Definition}
\newtheorem{lem}{Lemma}
\newtheorem{prop}{Proposition}
\begin{document}

%%%%%%%%%%%%%%%%%%%%%%%%%%%%%%%%%%%%%%%%%%%%%%%%%%%%%%%%%%%%%%%%%%%%%%%%%%%%%%%%%%%%%
%%%%%%%%%%%%%%%%%%%%%%%%%%%%%%%%%%%%%%%%%%%%%%%%%%%%%%%%%%%%%%%%%%%%%%%%%%%%%%%%%%%%%

\title{Besov-Morrey spaces and differences (extended version)}
\author{Marc Hovemann}
\date{\today}
\maketitle

\begin{center}
{\scriptsize  Institute of Mathematics, Friedrich-Schiller-University Jena, Ernst-Abbe-Platz 2, 07743 Jena, Germany }
\end{center}

\vspace{0.3 cm }

\textbf{Key words.} Besov space; Morrey space; Besov-Morrey space; Higher-order differences

\vspace{0.3 cm }

\textbf{Mathematics Subject Classification (2010) } 46E35

\vspace{0.3 cm }

\textbf{Abstract.} We study the Besov-Morrey spaces $ \mathcal{N}^{s}_{u,p,q}(\mathbb{R}^{d}) $ and show that under certain conditions on the parameters these spaces can be characterized in terms of higher-order differences. Furthermore we prove that some of the mentioned conditions are also necessary.

%&&&&&&&&&&&&&&&&&&&&&&&&&&&&&&&&&&&&&&&&&&&&&&
%&&&&&&&&&&&&&&&&&&&&&&&&&&&&&&&&&&&&&&&&&&&&&&

\section{Introduction and main results}

%&&&&&&&&&&&&&&&&&&&&&&&&&&&&&&&&&&&&&&&&&&&&&&&&&
%&&&&&&&&&&&&&&&&&&&&&&&&&&&&&&&&&&&&&&&&&&&&&&&&&

Nowadays the Besov spaces $B^s_{p,q} (\R)$ are a well-established tool to describe the regularity of functions and distributions. These function spaces have been introduced by Nikol'skij and Besov between 1951 and 1961, see \cite{Ni2}, \cite{Be1959} and \cite{Be1961}. Later the spaces $  B^s_{p,q} (\R) $ have been investigated in detail in the famous books of Triebel, see \cite{Tr83}, \cite{Tr92} and \cite{Tr06}. In the recent years a growing number of authors worked with a generalization of the Besov spaces where the $ L_{p} $ quasi-norm was replaced by a Morrey quasi-norm. These function spaces are called Besov-Morrey spaces and have the symbol $  \mathcal{N}^{s}_{u,p,q}(\R) $ with $ 0 < p \leq u < \infty$, $ 0 < q \leq \infty $ and $ s \in \mathbb{R} $. The Besov-Morrey spaces have been introduced by Kozono and Yamazaki in 1994, see \cite{KoYa}. Later they were studied by Mazzucato, see \cite{Maz2003}. Here the spaces $  \mathcal{N}^{s}_{u,p,q}(\R) $ appeared in connection with Navier-Stokes equations. More information concerning the Besov-Morrey spaces and other smoothness Morrey spaces that are related to them can be found in \cite{ysy}. This paper has two main goals. The first one is to prove an equivalent characterization in terms of higher-order differences for the spaces $ \mathcal{N}^{s}_{u,p,q}(\R) $. More exactly we will answer the question under which restrictions on the parameters $s,u,p,q$ and $d$ the spaces $\mathcal{N}^{s}_{u,p,q}(\R) $ can be described by using only $\Delta_h^N f(x)$. When we solve this problem we will obtain some sufficient conditions concerning the parameter $ s $. Because of this our second main goal is to investigate whether these conditions are also necessary. For the original Besov spaces $B^s_{p,q} (\R)$ characterizations in terms of differences are known since many years. So for example the following result can be found in a famous book of Triebel from 1983, see chapter 2.5.12. in \cite{Tr83}.

\begin{satz}\label{B_hist_dif}
Let $ 0 < p \leq \infty $, $ 0 < q \leq \infty $  and $ s > d \max  ( 0, \frac{1}{p} - 1  ) $. Let $ N \in \mathbb{N} $ such that $ N > s $. Then 
\begin{align*} 
\Vert f \vert L_{p}(\R) \Vert + \Big ( \int_{\R} \vert h \vert^{-sq} \Vert    \Delta^{N}_{h}f(x)   \vert L_{p}(\R) \Vert^{q} \frac{dh}{\vert h \vert^{d}} \Big )^{\frac{1}{q}} 
\end{align*}
is an equivalent quasi-norm in $B^s_{p,q} (\R)$. In the case $ q = \infty $ the usual modifications have to be made. 
\end{satz}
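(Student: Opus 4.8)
The plan is to pass to the Fourier-analytic (Littlewood--Paley) definition of $B^s_{p,q}(\R)$ and to establish a two-sided estimate between the difference expression and the quasi-norm
\[
\Vert f \vert B^s_{p,q}(\R)\Vert = \Big( \sum_{j=0}^{\infty} 2^{jsq}\, \Vert \cfi[\varphi_j\, \cf f] \vert L_{p}(\R)\Vert^{q} \Big)^{1/q},
\]
where $(\varphi_j)_{j\ge 0}$ is a smooth dyadic resolution of unity with $\varphi_j$ supported in $\{2^{j-1}\le |\xi|\le 2^{j+1}\}$ for $j\ge 1$. Writing $f_j := \cfi[\varphi_j\, \cf f]$, I would keep track of the quasi-Banach effects throughout: for $p<1$ or $q<1$ the ordinary triangle inequality must be replaced by its $\min(1,p)$- and $\min(1,q)$-powered versions, and in the delicate estimates by the Peetre maximal function together with the vector-valued maximal inequality.

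\textbf{Easy direction ($\ls$).} To bound the difference expression by $\Vert f\vert B^s_{p,q}(\R)\Vert$, I would decompose $\Delta^N_h f = \sum_{j\ge 0}\Delta^N_h f_j$ and estimate each band-limited piece in two regimes. A Taylor expansion combined with the Bernstein inequality $\Vert D^N f_j\vert L_p(\R)\Vert \ls 2^{jN}\Vert f_j\vert L_p(\R)\Vert$ (valid for all $0<p\le\infty$) gives $\Vert \Delta^N_h f_j\vert L_p(\R)\Vert \ls (2^j|h|)^N \Vert f_j\vert L_p(\R)\Vert$ for small $|h|$, while the trivial bound $\Vert \Delta^N_h f_j\vert L_p(\R)\Vert \ls \Vert f_j\vert L_p(\R)\Vert$ serves for large $|h|$. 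Splitting the $h$-integral at $|h|\sim 2^{-j}$ and inserting the factor $\min(1,(2^j|h|)^N)$, the resulting geometric sums converge exactly because $0<s<N$: the small-$h$ part requires $N>s$ and the large-$h$ part requires $s>0$. Summing over $j$ in $\ell^q$ then yields the claimed bound.

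\textbf{Hard direction ($\gsim$).} Here I must recover $2^{js}\Vert f_j\vert L_p(\R)\Vert$ from the differences. The key is a reproducing identity: since $\varphi_j$ is supported away from the origin, the symbol $(e^{ih\cdot\xi}-1)^N$ is invertible there for suitable $h$, so one can construct a kernel $K_j$, concentrated at scale $2^{-j}$, with $f_j = \int_{\R} K_j(h)\,\Delta^N_h f\, dh$ in a Fourier-multiplier sense. Applying a multiplier theorem for band-limited functions (the Michlin--H\"ormander estimate valid in the full range $0<p\le\infty$, in its maximal-function form when $p<1$) converts this into $\Vert f_j\vert L_p(\R)\Vert \ls \int_{|h|\le c\,2^{-j}} \ldots$, and after multiplying by $2^{js}$ and taking $\ell^q$-norms one obtains control by the integral term. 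The low-frequency block $f_0$, together with the term $\Vert f\vert L_p(\R)\Vert$, is handled separately; it is precisely at this stage that the hypothesis $s > d\max(0,\tfrac1p-1)$ is used, guaranteeing that the elements of $B^s_{p,q}(\R)$ are regular distributions (so that $\Delta^N_h f$ is well defined) and that the maximal-function estimates close in the quasi-Banach range.

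\textbf{Main obstacle.} I expect the reproducing/multiplier step in the hard direction to be the crux: constructing $K_j$ and verifying the multiplier hypotheses \emph{uniformly in} $j$, and then pushing the estimates through for $p<1$, where only maximal-function and vector-valued techniques are available. The exact role of the restriction $s>d\max(0,\tfrac1p-1)$, both in ensuring the embedding into the space of regular distributions and in the convergence of the reconstruction, is the subtle point that must be handled with care.
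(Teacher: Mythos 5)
The paper itself contains no proof of Theorem \ref{B_hist_dif}: it is quoted as background with a pointer to Section 2.5.12 of \cite{Tr83}, so there is no in-paper argument to compare against. Your sketch is a correct outline of precisely the standard Littlewood--Paley proof from that source --- the splitting $\min\bigl(1,(2^{j}|h|)^{N}\bigr)$ with Bernstein/Nikol'skij estimates for band-limited pieces in the easy direction (convergence forced by $0<s<N$), and an averaged reproducing formula plus the embedding guaranteed by $s>\sigma_{p}$ in the hard direction --- the only point I would flag is that $(e^{ih\cdot\xi}-1)^{N}$ vanishes on hyperplanes that do meet the annulus $\supp\varphi_{j}$, so the kernel $K_{j}$ must be obtained by averaging over $|h|\lesssim 2^{-j}$ (or over several directions) rather than from a single ``suitable'' $h$.
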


In this paper we want to prove related results for the more general Besov-Morrey spaces. The following theorem is one of our main results and shows how the spaces $ \mathcal{N}^{s}_{u,p,q}(\R)  $ can be characterized in terms of differences.

\begin{satz}\label{MR1}
Let $ 0 < p \leq u < \infty $ and $ 0 < q \leq \infty $. Let $ 0 < v \leq \infty $ and 
\begin{align*}
 s ~ > ~ d \max \Big  ( 0, \frac{1}{p} - 1, \frac{1}{p} - \frac{1}{v}   \Big ). 
\end{align*}
Let $ N \in \mathbb{N} $ with $ N > s $. Then a function $ f~ \in ~ L_{p}^{loc}(\R)$ belongs to $ \mathcal{N}^{s}_{u,p,q}(\R)   $ if and only if $ f \in L_{v}^{loc}(\R)$ and (modifications if $ q = \infty $ and/or $ v = \infty $) 
\begin{align*}
& \underbrace{\Vert f \vert \mathcal{M}^{u}_{p}( \mathbb{R}^d)   \Vert + \Big ( \int_{0}^{ \infty } t^{-sq-d \frac{q}{v}} \Big \Vert  \Big ( \int_{B(0,t)}\vert \Delta^{N}_{h}f(x) \vert^{v} dh \Big )^{\frac{1}{v}} \Big \vert  \mathcal{M}^{u}_{p}( \mathbb{R}^d) \Big \Vert^{q}  \frac{dt}{t} \Big )^{\frac{1}{q}}  } < \infty.  \\
& \hspace{4,5 cm} \Vert f \vert \mathcal{N}^{s}_{u,p,q}(\R) \Vert^{(v, \infty)} :=  
\end{align*}
The quasi-norms $ \Vert f \vert \mathcal{N}^{s}_{u,p,q}(\R) \Vert  $ and $ \Vert f \vert \mathcal{N}^{s}_{u,p,q}(\R) \Vert^{(v, \infty )} $  are equivalent for $ f \in L_{p}^{loc}(\R)$.
\end{satz}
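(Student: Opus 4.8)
The plan is to prove the asserted equivalence by establishing the two one-sided estimates separately: the absolute value and the $v$-th power inside the ball mean destroy the Fourier structure, so the two inclusions must be handled by genuinely different tools. Throughout I fix a smooth dyadic resolution of unity $(\varphi_j)_{j\in\mathbb{N}_0}$, write $f_j=\varphi_j(D)f$, and rely on three facts that should be available beforehand: the characterization of $\mathcal{N}^s_{u,p,q}(\R)$ through the Peetre maximal functions of the pieces $f_j$; its characterization through local means $k(t,D)f$ whose kernel symbol vanishes to order $>s$ at the origin and obeys a Tauberian condition; and the Hardy--Littlewood maximal inequality on $\mathcal{M}^u_p(\R)$ (in vector-valued form) together with the Plancherel--P\'olya--Nikol'skii inequality for functions of bounded spectrum.

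For the implication $f\in\mathcal{N}^s_{u,p,q}(\R)\Rightarrow\|f|\mathcal{N}^s_{u,p,q}(\R)\|^{(v,\infty)}<\infty$, I decompose $\Delta^N_hf=\sum_j\Delta^N_hf_j$ and estimate on $B(0,t)$ after splitting the sum at $2^jt\sim1$. For $2^jt\le1$ the moment estimate $|\Delta^N_hf_j(x)|\ls|h|^N\sup_{|z|\le N|h|}|\nabla^Nf_j(x+z)|$ together with the Bernstein inequality yields $|\Delta^N_hf_j(x)|\ls(2^jt)^N(f_j)^\ast(x)$, where $(f_j)^\ast$ denotes a Peetre maximal function. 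For $2^jt>1$ I bound $|\Delta^N_hf_j|$ by a finite sum of translates $|f_j(x+kh)|$, pass to a local $L_v$ average over a ball of radius $\sim kt\gtrsim2^{-j}$, and trade it for an $L_p$ average by the Nikol'skii inequality at the cost of a factor $(2^jt)^{d(1/p-1/v)_+}$. Taking $\mathcal{M}^u_p$ quasi-norms and applying the maximal inequality with a Peetre exponent $r<\min(p,1)$ (where the hypothesis $s>d(1/p-1)$ is used when $p<1$) turns each piece into $\|f_j|\mathcal{M}^u_p(\R)\|\sim2^{-js}b_j$; a discrete Young inequality in $L_q(\frac{dt}{t})$ then completes the estimate. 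The resulting kernel $\min\big((2^jt)^{N-s},(2^jt)^{-(s-d(1/p-1/v)_+)}\big)$ is summable precisely because $N>s$, $s>0$ and $s>d(1/p-1/v)$, which is where those hypotheses are consumed.

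For the converse I exploit that the plain ball mean of differences is a local mean. Writing $\sum_{k=0}^N(-1)^{N-k}\binom{N}{k}\int_{B(0,t)}f(x+kh)\,dh=f*K_t(x)$ with $K_t(y)=K_1(y/t)$, one checks that $\widehat{K_1}$ vanishes to order $N>s$ at the origin and is non-degenerate on an annulus, so $t^{-d}f*K_t$ is an admissible local mean $k(t,D)f$. H\"older's inequality in $h$ gives the pointwise bound $t^{-d}|f*K_t(x)|\ls t^{-d/v}\big(\int_{B(0,t)}|\Delta^N_hf(x)|^v\,dh\big)^{1/v}$; taking $\mathcal{M}^u_p$ quasi-norms and the $L_q(t^{-sq}\frac{dt}{t})$ integral shows that the local-means quasi-norm is dominated by $\|f|\mathcal{N}^s_{u,p,q}(\R)\|^{(v,\infty)}$, whence the local means characterization yields $f\in\mathcal{N}^s_{u,p,q}(\R)$ with the reverse bound on the quasi-norms. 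Along the way I verify $f\in L^{loc}_v(\R)$ and that the distributions in play are regular functions, again using $s>d(1/p-1)$.

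The main obstacle is twofold. In the first direction the delicate regime is $v>p$ at high frequencies: the Nikol'skii inequality must be localized so that only the factor $(2^jt)^{d(1/p-1/v)}$ is lost and the tails are absorbed by the Morrey maximal function, since a crude Peetre/$L_\infty$ bound over all of $B(x,kt)$ would cost $(2^jt)^a$ and ruin the summation. In the second direction the subtle point is that $K_1$ is only bounded with compact support rather than smooth, so I must invoke a version of the local means theorem valid for such rough kernels (or approximate $K_1$ by smooth kernels) and confirm the Tauberian non-degeneracy of $\widehat{K_1}$; this is the step I expect to demand the most care.
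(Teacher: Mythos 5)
Your route is genuinely different from the paper's. The paper does essentially no direct Fourier analysis for this theorem: it verifies that the sequence space $l^{s}_{q}(\mathcal{M}^{u}_{p}(\R))$ fits the axiomatic framework of Hedberg and Netrusov, identifies $Y(l^{s}_{q}(\mathcal{M}^{u}_{p}(\R)))$ with $\mathcal{N}^{s}_{u,p,q}(\R)$, imports their Proposition 1.1.12 and Theorem 1.1.14 to obtain the dyadic ball-means characterization of Proposition \ref{pro_HNres1}, and then passes to the continuous quasi-norm with $a=\infty$ by monotonicity, covering arguments and translation invariance of $\mathcal{M}^{u}_{p}(\R)$ (Theorem \ref{thm_MR_va}). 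Your first direction (Littlewood--Paley decomposition of $\Delta^{N}_{h}f$, splitting at $2^{j}t\sim 1$, Peetre maximal functions with an exponent $r<p$ so that the scalar Chiarenza--Frasca maximal inequality of Lemma \ref{l_ineq1} applies, discrete Young in $L_{q}(dt/t)$) is sound and is in effect what is hidden inside the Hedberg--Netrusov machinery; note that on the Besov--Morrey scale only the scalar maximal inequality is needed, since the $\ell_{q}$-norm sits outside the Morrey quasi-norm.

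The converse direction, however, has a genuine gap for $0<v<1$, a case the theorem explicitly includes (there the hypothesis degenerates to $s>d\max(0,\frac1p-1)$ because $\frac1p-\frac1v<\frac1p-1$). Your key step
\[
t^{-d}\Big|\int_{B(0,t)}\Delta^{N}_{h}f(x)\,dh\Big| \;\ls\; t^{-d/v}\Big(\int_{B(0,t)}\vert\Delta^{N}_{h}f(x)\vert^{v}\,dh\Big)^{\frac1v}
\]
is H\"older's inequality only when $v\ge 1$; for $v<1$ the comparison between the normalized $L_{1}$ and $L_{v}$ means over $B(0,t)$ goes in the opposite direction, so the local mean $f*K_{t}$ is \emph{not} controlled by the quantity you are given and the whole reduction to a local-means characterization collapses. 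Handling $v<1$ requires a different idea (iterated difference/reconstruction identities combined with the fractional maximal operators $\hat{M}_{r,t}$ for small $r$, which is exactly what the Hedberg--Netrusov approach supplies). Two further points to repair even for $v\ge 1$: the kernel $K_{1}$ is not a bounded compactly supported function but a compactly supported measure with an atom at the origin coming from the $k=0$ term of \eqref{diff_form}, so the local-means theorem you invoke must admit such kernels; and the Tauberian non-degeneracy of $\widehat{K_{1}}(\xi)=c\int_{B(0,1)}(e^{ih\cdot\xi}-1)^{N}\,dh$ on an annulus is not automatic (for odd $N$ the leading term at the origin even cancels by symmetry) and does need the verification you flag.
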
  
As already mentioned the second main goal of this paper is to investigate whether the conditions concerning the parameter $ s $ that you can find in theorem \ref{MR1} are also necessary. For that purpose by $ {\bf N}^{s, N, \infty }_{u, p, q, v}(\R) $ we define the collection of all $ f \in L_{\max(p,v)}^{loc}(\R)$ such that 
$ \Vert f \vert \mathcal{N}^{s}_{u,p,q}(\R)  \Vert^{(v, \infty )} $ is finite. Using this notation we can formulate the following theorem which is our second main result.  

\begin{satz}\label{MR2}

Let $ s \in \mathbb{R}  $, $ 0 < p \leq u < \infty $, $ 0 < q \leq \infty $, $ 0 < v \leq \infty $ and $ N \in \mathbb{N} $. Then we have $  \mathcal{N}^{s}_{u,p,q}(\R) \not =  {\bf N}^{s, N, \infty}_{u, p, q, v}(\R)    $ if we are in one of the following cases. 

\begin{itemize}
\item[(i)] We have $ s \leq 0  $.

\item[(ii)] We have $ 0 < p < 1 $ and
\begin{itemize}
\item[(a)]
either $ s < d \frac{p}{u} \left ( \frac{1}{p} - 1   \right ) $
\item[(b)]
or $ s = d \frac{p}{u} \left ( \frac{1}{p} - 1   \right ) $ and $ q > 1 $.
\end{itemize}

\item[(iii)] We have $ s < d  \frac{p}{u} \left ( \frac{1}{p} - \frac{1}{v}  \right )   $ with $ 0 < p < v < \infty  $.

\item[(iv)] We have 
\begin{itemize}
\item[(a)]
either $ N < s $ and $ 0 < q \leq \infty $
\item[(b)]
or $ N = s $ and $ 0 < q < \infty $
\item[(c)]
or $ N = s $ with $ q = \infty  $ and $ u = p  $ and $  v \geq 1 $.
\end{itemize}
\end{itemize}
\end{satz}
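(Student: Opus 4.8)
The plan is to establish each of the four cases by exhibiting an explicit counterexample, that is, a single function lying in exactly one of the two spaces $\mathcal{N}^{s}_{u,p,q}(\R)$ and $\mathbf{N}^{s,N,\infty}_{u,p,q,v}(\R)$. Since membership in $\mathbf{N}^{s,N,\infty}_{u,p,q,v}(\R)$ means by definition the finiteness of $\Vert f\vert\mathcal{N}^{s}_{u,p,q}(\R)\Vert^{(v,\infty)}$ together with local integrability, in each case I would produce either a function $f\in\mathcal{N}^{s}_{u,p,q}(\R)$ with $\Vert f\vert\mathcal{N}^{s}_{u,p,q}(\R)\Vert^{(v,\infty)}=\infty$, or a locally integrable $f$ with finite difference quasi-norm that fails to belong to $\mathcal{N}^{s}_{u,p,q}(\R)$. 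The central computational input throughout is the Morrey scaling of a single dilated bump: if $\psi$ is smooth and compactly supported, then $\Vert\psi(M\cdot)\vert\mathcal{M}^{u}_{p}(\R)\Vert\sim M^{-d/u}$, while a configuration of $\sim M^{d}$ such bumps filling the unit cube has Morrey norm $\sim 1$. It is precisely the discrepancy between the exponent $d/u$ and the Lebesgue exponent $d/p$ that will produce the factor $\frac{p}{u}$ in the thresholds of (ii) and (iii).

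The structurally cleanest cases are (i) and (iv). For (iv)(a),(b) I would use a fixed smooth bump $\phi$: it belongs to $\mathcal{N}^{s}_{u,p,q}(\R)$ for every $s$, whereas a direct computation (for small $t$ one has $\Delta^{N}_{h}\phi(x)\approx h^{N}\partial^{N}\phi(x)$) shows that the integrand defining $\Vert\phi\vert\mathcal{N}^{s}_{u,p,q}(\R)\Vert^{(v,\infty)}$ behaves like $t^{(N-s)q-1}$ near $t=0$, which diverges exactly when $N<s$, or when $N=s$ and $q<\infty$. This is the saturation of the $N$-th order modulus of smoothness. For (iv)(c) the bump has finite difference quasi-norm, so at the endpoint $N=s$, $q=\infty$ I would instead take a lacunary Weierstrass-type series $\sum_{j}2^{-jN}\psi(2^{j}x)$, which lies in $\mathcal{N}^{N}_{p,p,\infty}(\R)$ but whose $N$-th differences violate the $\sup_{t}t^{-N-d/v}$ bound; the hypotheses $u=p$ and $v\geq 1$ are exactly what make this example computable, reducing the Morrey norm to an $L_p$ norm and keeping the inner $L_v$-average well behaved. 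For (i) with $s<0$ I would invoke the fact that $\mathcal{N}^{s}_{u,p,q}(\R)$ contains genuine distributions that are not regular functions and hence cannot lie in the purely function-theoretic space $\mathbf{N}^{s,N,\infty}_{u,p,q,v}(\R)$; the endpoint $s=0$ needs a tailored (logarithmically modified) example, since then the weight $t^{-sq}$ imposes no penalty as $t\to 0$.

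Cases (ii) and (iii) are the heart of the matter and mirror the two independent directions of the equivalence in Theorem \ref{MR1}. For (ii) the reconstruction direction $\Vert f\vert\mathcal{N}^{s}_{u,p,q}(\R)\Vert\lesssim\Vert f\vert\mathcal{N}^{s}_{u,p,q}(\R)\Vert^{(v,\infty)}$ is the one that breaks when $0<p<1$, so I would construct a locally integrable $f$ whose difference quasi-norm is finite but which is not in $\mathcal{N}^{s}_{u,p,q}(\R)$. The construction is a scale-lacunary superposition of dilated bumps with amplitudes tuned to the threshold; the subadditivity $\Vert f+g\vert L_{p}\Vert^{p}\leq\Vert f\vert L_{p}\Vert^{p}+\Vert g\vert L_{p}\Vert^{p}$ valid for $p<1$ keeps $\int_{B(0,t)}|\Delta^{N}_{h}f|^{v}\,dh$ small, whereas the $\ell_{q}$-summation of dyadic Morrey norms defining $\mathcal{N}^{s}_{u,p,q}(\R)$ diverges, the crossover occurring exactly at $s=d\frac{p}{u}(\frac1p-1)$ and the subcase (b) requiring $q>1$ for the borderline series to diverge. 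For (iii) it is the converse direction that fails: since $p<v$ the $L_{v}$-average of differences is a stronger object, and I would exhibit an $f\in\mathcal{N}^{s}_{u,p,q}(\R)$, namely a tuned singular function, whose $v$-difference quasi-norm is infinite, with the same Morrey exponent $d/u$ shifting the classical threshold $d(\frac1p-\frac1v)$ down by the factor $\frac{p}{u}$.

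The main obstacle will be twofold. First, verifying membership in $\mathcal{N}^{s}_{u,p,q}(\R)$ for the constructed functions, which is most cleanly done through the wavelet or Littlewood--Paley characterization of Besov--Morrey spaces, and matching these decompositions with the genuinely pointwise difference quantities; the interaction of the Morrey supremum with the inner $L_{v}$-integration in $h$ is delicate and is where the exponent bookkeeping must be carried out carefully. Second, and hardest, are the threshold-attaining endpoints, namely the equality $s=d\frac{p}{u}(\frac1p-1)$ with $q>1$ in (ii)(b) and the endpoint $N=s$, $q=\infty$ in (iv)(c): here order-of-magnitude estimates no longer suffice, and one must track constants and choose the amplitude sequence so that the relevant series diverges by an arbitrarily small logarithmic margin, which is the part of the argument most sensitive to the precise values of $q$, $u$, $p$ and $v$.
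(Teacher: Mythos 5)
Your overall strategy (explicit counterexamples in each case) is reasonable for parts of the theorem, and for case (iv) it is essentially the paper's route: (iv)(a),(b) are handled there with a cut-off exponential $e^{x_1+\dots+x_d}$ (chosen so that the $N$-th directional differences admit a genuine \emph{lower} bound $\gtrsim |h|^N$ for $h$ in the positive orthant, the point your sketch glosses over), and (iv)(c) with a lacunary sum of disjointly supported atoms $\sum_k 2^{n_k(d/u-N)}\phi(2^{n_k}x-x_k)$, close in spirit to your Weierstrass-type series. For case (i) you are right about $s<0$ (singular distributions), but at $s=0$ you are looking at the wrong end of the $t$-integral: since $a=\infty$, \emph{any} nonzero $f\in C_0^\infty(\R)$ already works, because for $|h|$ large all shifted terms in $\Delta^N_h f(x)=\sum_k(-1)^{N-k}\binom{N}{k}f(x+kh)$ except $k=0$ vanish on a fixed ball, so $\int_{B(0,t)}|\Delta^N_hf(x)|^v\,dh\gtrsim t^d$ and $\int^\infty t^{-dq/v}\cdot t^{dq/v}\,\frac{dt}{t}$ diverges; no logarithmic modification near $t=0$ is needed or relevant.

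The genuine gaps are in (ii) and (iii). For (ii) the thresholds $s<d\frac{p}{u}(\frac1p-1)$, resp.\ $s=d\frac{p}{u}(\frac1p-1)$ with $q>1$, are exactly the known thresholds for $\mathcal{N}^{s}_{u,p,q}(\R)\not\subset L_1^{loc}(\R)$ (Lemma \ref{l_bp2}); since ${\bf N}^{s,N,\infty}_{u,p,q,v}(\R)\subset L_{\max(p,v)}^{loc}(\R)$ by definition, the non-equality is immediate in the direction $\mathcal{N}\not\subset{\bf N}$. Your plan to instead exhibit ${\bf N}\not\subset\mathcal{N}$ via a lacunary superposition is unverified, substantially harder, and would have to reproduce these exact endpoints (including the $q>1$ borderline) by an unrelated mechanism; nothing in your sketch guarantees this is even possible. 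For (iii) the problem is sharper: a "tuned singular function" of radial power type $\rho(x)|x|^{\alpha}$ lies in $\mathcal{N}^{s}_{u,p,q}(\R)$ only for $s<\frac{d}{u}+\alpha$ and fails to be locally $L_v$ only for $\alpha\le -\frac dv$, so this family reaches only $s<d(\frac1u-\frac1v)$, which is strictly below the claimed threshold $d\frac{p}{u}(\frac1p-\frac1v)=\frac du-\frac{dp}{uv}$ whenever $p<u$. Capturing the factor $\frac pu$ requires Morrey-specific extremal configurations (sparse multi-bump families), and the paper avoids constructing them by a soft argument: assuming equality of the sets, a Closed Graph argument upgrades it to continuity of the identity on functions supported in a small ball, which forces $\mathcal{N}^{s}_{u,p,q}(\Omega)\hookrightarrow L_v(\Omega)$ and hence $s\ge d\frac{p}{u}(\frac1p-\frac1v)$ by the sharp non-embedding result of Haroske and Skrzypczak (Lemma \ref{l_bmD1}), a contradiction. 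As written, your proposal for (iii) would not prove the statement in the full claimed range.
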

If you compare this result with theorem \ref{MR1} it turns out that there are still some open questions at the moment. Let us look at the special case $ v = 1 $ and $ 0 < p < 1 $. Then for $  d  \frac{p}{u}  ( \frac{1}{p} - 1   ) < s \leq d  ( \frac{1}{p} - 1   )   $ it is not clear whether it is possible to describe the spaces $ \mathcal{N}^{s}_{u,p,q}(\R)  $ in terms of differences or not. Notice that this gap disappears in the case of the original Besov spaces $ B^{s}_{p,q}(\R) $. It is possible to illustrate our results with a $  ( \frac{1}{p} , s  ) $ - diagram. For convenience in the following diagram we assume $ u = 1 $ if $ 0 < p < 1 $. The influence of the parameter $ q $ is hidden. In the area A we have 
$ \mathcal{N}^{s}_{u,p,q}(\R)  =  {\bf N}^{s, N, \infty}_{u, p, q, 1}(\R) $ and in B we find $ \mathcal{N}^{s}_{u,p,q}(\R) \not =  {\bf N}^{s, N, \infty}_{u, p, q, 1}(\R) $. In area C at the moment it is not clear whether we have $ \mathcal{N}^{s}_{u,p,q}(\R)  =  {\bf N}^{s, N, \infty}_{u, p, q, 1}(\R) $ or $ \mathcal{N}^{s}_{u,p,q}(\R) \not =  {\bf N}^{s, N, \infty}_{u, p, q, 1}(\R) $.

%\begin{figure}[h]

\begin{minipage}[b]{0.2\textwidth}
\begin{picture}(0,0)%
\includegraphics{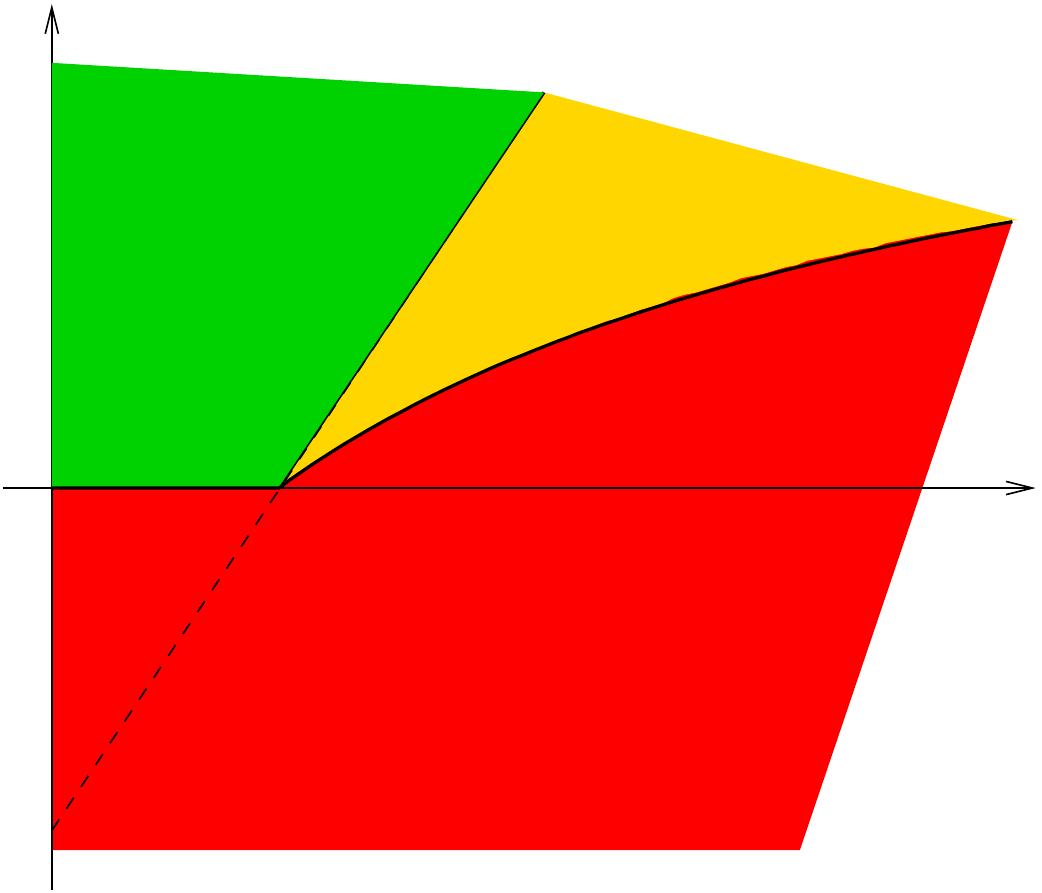}%
\end{picture}%
\setlength{\unitlength}{4144sp}%
\begingroup\makeatletter\ifx\SetFigFont\undefined%
\gdef\SetFigFont#1#2#3#4#5{%
  \reset@font\fontsize{#1}{#2pt}%
  \fontfamily{#3}\fontseries{#4}\fontshape{#5}%
  \selectfont}%
\fi\endgroup%
\begin{picture}(4749,4074)(2014,-4123)
\put(2836,-1231){\makebox(0,0)[b]{\smash{{\SetFigFont{10}{12.0}{\familydefault}{\mddefault}{\updefault}{\color[rgb]{0,0,0}$A$}%
}}}}
\put(4726,-1096){\makebox(0,0)[b]{\smash{{\SetFigFont{10}{12.0}{\familydefault}{\mddefault}{\updefault}{\color[rgb]{0,0,0}$C$}%
}}}}
\put(4141,-3121){\makebox(0,0)[b]{\smash{{\SetFigFont{10}{12.0}{\familydefault}{\mddefault}{\updefault}{\color[rgb]{0,0,0}$B$}%
}}}}
\put(6571,-2491){\makebox(0,0)[b]{\smash{{\SetFigFont{10}{12.0}{\familydefault}{\mddefault}{\updefault}{\color[rgb]{0,0,0}$\frac1p$}%
}}}}
\put(2206,-2491){\makebox(0,0)[rb]{\smash{{\SetFigFont{10}{12.0}{\familydefault}{\mddefault}{\updefault}{\color[rgb]{0,0,0}$0$}%
}}}}
\put(3286,-2491){\makebox(0,0)[lb]{\smash{{\SetFigFont{10}{12.0}{\familydefault}{\mddefault}{\updefault}{\color[rgb]{0,0,0}$1$}%
}}}}
\put(2206,-331){\makebox(0,0)[rb]{\smash{{\SetFigFont{10}{12.0}{\familydefault}{\mddefault}{\updefault}{\color[rgb]{0,0,0}$s$}%
}}}}
\put(2206,-3886){\makebox(0,0)[rb]{\smash{{\SetFigFont{10}{12.0}{\familydefault}{\mddefault}{\updefault}{\color[rgb]{0,0,0}$-d$}%
}}}}
\put(4186,-871){\makebox(0,0)[rb]{\smash{{\SetFigFont{10}{12.0}{\familydefault}{\mddefault}{\updefault}{\color[rgb]{0,0,0}$s=\sigma_p$}%
}}}}
\put(4366,-1816){\makebox(0,0)[lb]{\smash{{\SetFigFont{10}{12.0}{\familydefault}{\mddefault}{\updefault}{\color[rgb]{0,0,0}$s=\max(0,\frac{d}{u}(1-p))$}%
}}}}
\end{picture}%
\end{minipage}\hfill
\begin{minipage}[b]{0.29\textwidth}

\textbf{Figure 1.} Characterizations in terms of differences for $ \mathcal{N}^{s}_{u,p,q}(\R) $. 

\vspace{1 cm}

\end{minipage}

%\end{figure}

This paper is organized in the following way. In chapter 2 we want to define the Besov-Morrey spaces $ \mathcal{N}^{s}_{u,p,q}(\R) $ precisely. In addition we want to collect some useful properties concerning these spaces. In chapter 3 of this article we want to prove main theorem \ref{MR1}. To do so we use some methods developed by Hedberg and Netrusov, see \cite{HN}. In chapter 4 of this paper we want to prove the results concerning necessity including main theorem \ref{MR2}. But first of all we want to fix some notation.

\section*{Notation}

As usual $\N$ denotes the natural numbers, $\N_0$ the natural numbers including $0$, $\zz$ the integers and $\re$ the real numbers. $\R$ denotes the $d$-dimensional  Euclidean space. We put
\[
 B(x,t) := \{y\in \R: \quad |x-y|< t\}\, , \qquad x \in \R\, , \quad t>0.
\]
All functions are assumed to be complex-valued, i.\,e. we consider functions $f:~ \R \to \com$. Let $\mathcal{S}(\R)$ be the collection of all Schwartz functions on $\R$ endowed with the usual topology and denote by $\mathcal{S}'(\R)$ its topological dual, namely the space of all bounded linear functionals on $\mathcal{S}(\R)$ endowed with the weak $\ast$-topology. The symbol $\cf$ refers to  the Fourier transform,
$\cfi$ to its inverse transform, both defined on $\cs'(\R)$. Almost all function spaces which we consider in this paper are subspaces of $\cs'(\R)$, i.\,e. spaces of equivalence classes with respect to almost everywhere equality. However, if such an equivalence class contains a continuous representative, then usually we work with this representative and call also the equivalence class a continuous function. 
By $C^\infty_0(\R)$ we mean the set of all infinitely often differentiable functions on $\R$ with compact support. Given a quasi-Banach space $X$ the operator norm of a linear operator $T:\, X\to X$ is denoted by $\|T|\cl (X)\|$. For two quasi-Banach spaces $ X $ and $ Y $ we write $ X \hookrightarrow Y $ if $ X \subset Y $ and the natural embedding of $ X $ into $ Y $ is continuous. For all $p\in(0,\infty )$ and $ q \in (0 , \infty ] $ we write 
\[
\sigma_p:= d\,  \max \Big(0, \frac 1p - 1\Big) \qquad \mbox{and}\qquad 
 \sigma_{p,q}:= d\,  \max \Big(0, \frac 1p -1 , \frac 1q - 1 \Big) \, .
\]
The symbols  $C, C_1, c, c_{1} \ldots $ denote positive constants that depend only on the fixed parameters $d,s,u,p,q$ and probably on auxiliary functions. Unless otherwise stated their values may vary from line to line. In this paper one important tool will be differences of higher order. Let $ f : \mathbb{R}^d \rightarrow \mathbb{C}$ be a function. Then for $ x, h \in \mathbb{R}^d $ we define the difference of the first order by $ \Delta_{h}^{1}f (x) := f ( x + h ) - f (x) $. Let $ N \in \mathbb{N}$. Then we define the difference of order $ N $ by 
\[
\Delta_{h}^{N}f (x) := \left  (\Delta_{h} ^1 \left ( \Delta_{h} ^{N-1}f \right  )\right ) (x) \, , \qquad x \in \R\, . 
\]

\section{Definition and basic properties of Besov-Morrey spaces}

The Besov-Morrey spaces $ \mathcal{N}^{s}_{u,p,q}(\R)   $ are function spaces that are built upon Morrey spaces. Because of this at first we want to recall the definition of the Morrey spaces $ \mathcal{M}^{u}_{p}(\R)  $. 

\begin{defi}\label{def_mor}

Let $ 0 < p \leq u < \infty$. Then the  Morrey space $ \mathcal{M}^{u}_{p}(\R)  $ is defined to be the set of all functions $ f \in L_{p}^{loc}(\R) $ such that 
\begin{align*}
\Vert f \vert \mathcal{M}^{u}_{p}(\R) \Vert := \sup_{y \in \R, r > 0} \vert B(y,r) \vert^{\frac{1}{u}-\frac{1}{p}} \Big ( \int_{B(y,r)} \vert f(x) \vert^{p} dx      \Big )^{\frac{1}{p}} < \infty.
\end{align*} 

\end{defi}

The Morrey spaces $ \mathcal{M}^{u}_{p}(\R)  $ are quasi-Banach spaces and Banach spaces for $ p \geq 1$. They have many connections to the Lebesgue spaces $ L_{p}(\R)$. So for $ p \in (0,\infty) $ we have $ \mathcal{M}^{p}_{p}(\R) = L_{p}(\R)$. Moreover for $ 0 < p_{2} \leq p_{1} \leq u < \infty $ we have

\begin{center}
$ L_{u}(\R) = \mathcal{M}^{u}_{u}(\R) \hookrightarrow   \mathcal{M}^{u}_{p_{1}}(\R)  \hookrightarrow  \mathcal{M}^{u}_{p_{2}}(\R)  $.
\end{center} 

To define the spaces $ \mathcal{N}^{s}_{u,p,q}(\R)   $ we need a so-called smooth dyadic decomposition of the unity. Let $\varphi_0 \in C_0^{\infty}({\R})$ be a non-negative function such that $\varphi_0(x) = 1$ if $|x|\leq 1$ and $ \varphi_0 (x) = 0$ if $|x|\geq \frac{3}{2}$. 
For $k\in \N$ we define $  \varphi_k(x) := \varphi_0(2^{-k}x)-\varphi_0(2^{-k+1}x)  $. Because of $  \sum_{k=0}^\infty \varphi_k(x) = 1 $ and $ \supp \varphi_k \subset \big\{x\in \R: \: 2^{k-1}\le |x|\le 3 \cdot 2^{k-1}\big\}  $ for every $ k \in \mathbb{N}  $ we  call the system $(\varphi_k)_{k\in \N_0 }$ a smooth dyadic decomposition of the unity on $\R$. For $k \in \N_0$ because of the Paley-Wiener-Schwarz theorem $\cfi[\varphi_{k}\, \cf f]$ is a smooth function for all $f\in \cs'(\R)$. If we use the system $(\varphi_k)_{k\in \N_0 }$ we are able to define the Besov-Morrey spaces $ \mathcal{N}^{s}_{u,p,q}(\R) $.

\begin{defi}\label{def_bms}

Let $ 0 < p \leq u < \infty $, $ 0 < q \leq \infty $ and $ s \in \mathbb{R} $. $ (\varphi_{k})_{k\in \N_0 }$ is a smooth dyadic decomposition of the unity. Then the Besov-Morrey space $  \mathcal{N}^{s}_{u,p,q}(\mathbb{R}^{d}) $ is defined to be the set of all distributions $ f \in \mathcal{S}'(\mathbb{R}^{d})  $ such that
\begin{align*} 
\Vert f \vert \mathcal{N}^{s}_{u,p,q}(\mathbb{R}^{d})  \Vert :=  \Big ( \sum_{k = 0}^{\infty} 2^{ksq}   \Vert \mathcal{F}^{-1}[\varphi_{k} \mathcal{F}f]  \vert \mathcal{M}^{u}_{p}(\R)   \Vert  ^{q} \Big   )^{\frac{1}{q}} < \infty .
\end{align*}

In the case $ q = \infty $ the usual modifications are made.

\end{defi}

In what follows we want to collect some basic properties of the Besov-Morrey spaces. Most of them will be used later.

\begin{lem}\label{l_bp1}
Let $ 0 < p \leq u < \infty $, $ 0 < q \leq \infty $ and $ s \in \mathbb{R} $. Then the following assertions are true.

\begin{itemize}
\item[(i)]

The spaces $  \mathcal{N}^{s}_{u,p,q}(\mathbb{R}^{d}) $ are independent of the chosen smooth dyadic decomposition of the unity in the sense of equivalent quasi-norms. 

\item[(ii)]

The spaces $  \mathcal{N}^{s}_{u,p,q}(\mathbb{R}^{d}) $ are quasi-Banach spaces. For $ p \geq 1 $ and $ q \geq 1 $ they are Banach spaces.

\item[(iii)]

Let $ \tau = \min(1,p,q) $. Then we have

$  \Vert f + g \vert \mathcal{N}^{s}_{u,p,q}(\mathbb{R}^{d})  \Vert^{\tau} \leq \Vert f \vert \mathcal{N}^{s}_{u,p,q}(\mathbb{R}^{d})  \Vert^{\tau} + \Vert g \vert \mathcal{N}^{s}_{u,p,q}(\mathbb{R}^{d})  \Vert^{\tau} $ for all $ f,g \in \mathcal{N}^{s}_{u,p,q}(\mathbb{R}^{d})  $.

\item[(iv)]

It holds $\mathcal{S}(\mathbb{R}^{d}) \hookrightarrow    \mathcal{N}^{s}_{u,p,q}(\mathbb{R}^{d}) \hookrightarrow   \mathcal{S}'(\mathbb{R}^{d})$.

\item[(v)]

We have $ \mathcal{N}^{s}_{p,p,q}(\mathbb{R}^{d}) = B^{s}_{p,q}(\R)  $.

\end{itemize}

\end{lem}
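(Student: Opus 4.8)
The plan is to treat the five assertions in an order that lets the later parts rely on the earlier ones. Since (iii) supplies the $\tau$-triangle inequality on which the quasi-norm property claimed in (ii) rests, and (v) is essentially a restatement of the identity $\mathcal{M}^p_p(\R)=L_p(\R)$ already recorded in the text, I would prove (iii) and (v) first, then the decomposition-independence (i) and the embeddings (iv), and finally deduce completeness (ii).

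For (iii) I would separate the two ingredients of the quasi-norm. First, the Morrey functional satisfies a $\min(1,p)$-power triangle inequality: for $0<p\le 1$ one has $\int_{B(y,r)}|f+g|^p\,dx\le \int_{B(y,r)}|f|^p\,dx+\int_{B(y,r)}|g|^p\,dx$, and multiplying by $|B(y,r)|^{(1/u-1/p)p}$ and taking the supremum gives $\|f+g|\mathcal{M}^u_p(\R)\|^{p}\le\|f|\mathcal{M}^u_p(\R)\|^{p}+\|g|\mathcal{M}^u_p(\R)\|^{p}$, while for $p\ge1$ Minkowski's inequality yields the ordinary triangle inequality. Writing $a_k,b_k,c_k$ for the Morrey quasi-norms of the $k$-th pieces $\cfi[\varphi_k\cf f]$, $\cfi[\varphi_k\cf g]$ and $\cfi[\varphi_k\cf(f+g)]$, linearity of $\cfi[\varphi_k\cf\,\cdot\,]$ combined with this gives $c_k^{\tau}\le a_k^{\tau}+b_k^{\tau}$ for $\tau=\min(1,p,q)$, using $(x+y)^{\theta}\le x^{\theta}+y^{\theta}$ for $0<\theta\le1$. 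Since $q/\tau\ge1$, the triangle inequality in the weighted space $\ell_{q/\tau}$ applied to $(2^{ks\tau}a_k^{\tau})_k$, $(2^{ks\tau}b_k^{\tau})_k$ and $(2^{ks\tau}c_k^{\tau})_k$ then produces $\|f+g|\mathcal{N}^s_{u,p,q}(\R)\|^{\tau}\le\|f|\mathcal{N}^s_{u,p,q}(\R)\|^{\tau}+\|g|\mathcal{N}^s_{u,p,q}(\R)\|^{\tau}$, which is (iii). Assertion (v) is then immediate: letting $r\to\infty$ shows $\|f|\mathcal{M}^p_p(\R)\|=\|f|L_p(\R)\|$, so Definition \ref{def_bms} with $u=p$ reduces verbatim to the Fourier-analytic definition of $B^s_{p,q}(\R)$.

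For the decomposition-independence (i) I would exploit the support conditions: if $(\psi_j)_{j\in\N_0}$ is a second smooth dyadic decomposition, then $\supp\varphi_k$ and $\supp\psi_j$ meet only when $|j-k|\le1$, so $\cfi[\varphi_k\cf f]=\sum_{|j-k|\le1}\cfi[\varphi_k]\ast\cfi[\psi_j\cf f]$. The key tool is the convolution estimate $\|g\ast h|\mathcal{M}^u_p(\R)\|\le\|g|L_1(\R)\|\,\|h|\mathcal{M}^u_p(\R)\|$ together with the scaling relation $\varphi_k(\cdot)=\varphi_1(2^{-(k-1)}\cdot)$ for $k\ge1$, which makes $\|\cfi[\varphi_k]|L_1(\R)\|$ bounded uniformly in $k$. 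This yields $a_k\ls\sum_{|j-k|\le1}b_j$ for the building blocks of the two systems, and summing in weighted $\ell_q$ gives both directions of the equivalence. For the embeddings (iv), the inclusion $\mathcal{N}^s_{u,p,q}(\R)\hookrightarrow\cs'(\R)$ follows by estimating $\langle\cfi[\varphi_k\cf f],\phi\rangle$ for $\phi\in\cs(\R)$: band-limitedness of each piece lets me bound this by $2^{k\kappa}\|\cfi[\varphi_k\cf f]|\mathcal{M}^u_p(\R)\|$ times a Schwartz seminorm of $\phi$, with a fixed exponent $\kappa$, and the rapid decay supplied by $\phi$ makes the $k$-sum converge. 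Conversely, for $\phi\in\cs(\R)$ the localized pieces $\cfi[\varphi_k\cf\phi]$ have Morrey quasi-norms decaying faster than any power of $2^{-k}$, so $\|\phi|\mathcal{N}^s_{u,p,q}(\R)\|$ is dominated by finitely many Schwartz seminorms, giving $\cs(\R)\hookrightarrow\mathcal{N}^s_{u,p,q}(\R)$.

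Finally, (ii) follows from (iii) and (iv) by a Fatou-type completeness argument: a Cauchy sequence $(f_j)$ in $\mathcal{N}^s_{u,p,q}(\R)$ is Cauchy in $\cs'(\R)$ by (iv), hence converges there to some $f$; since each Littlewood–Paley piece is band-limited, convergence in $\cs'(\R)$ upgrades to pointwise convergence of the pieces, and Fatou's lemma applied inside the Morrey integrals and to the $\ell_q$-sum shows $f\in\mathcal{N}^s_{u,p,q}(\R)$ and $f_j\to f$ in quasi-norm. For $p\ge1$ and $q\ge1$ one has $\tau=1$, so by (iii) the quasi-norm is an honest norm and the space is Banach. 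I expect the main obstacle to be the convolution/multiplier inequality in the Morrey setting underlying (i) and (iv)—in particular justifying $\|g\ast h|\mathcal{M}^u_p(\R)\|\ls\|g|L_1(\R)\|\,\|h|\mathcal{M}^u_p(\R)\|$ in the quasi-Banach range $p<1$, where Minkowski's integral inequality is unavailable—and the Fatou step in (ii), which requires care in passing the $\cs'$-limit inside the supremum defining the Morrey quasi-norm.
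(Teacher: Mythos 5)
The paper does not actually prove this lemma: it simply cites Tang--Xu (for (i)), Kozono--Yamazaki (for (ii), (iii)) and Sawano--Tanaka (for (iv), (v)). Your sketch reconstructs the standard arguments from those references, and parts (iii), (v), (iv) and (ii) are correct as outlined: the $\tau$-triangle inequality via the pointwise $p$-inequality plus $\ell_{q/\tau}$, the identification $\mathcal{M}^p_p(\R)=L_p(\R)$, the pairing estimate for band-limited pieces, and the Fatou-type completeness argument are all sound and are essentially what the cited sources do.

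The one step that, as written, would fail is the convolution inequality $\|g\ast h\,|\,\mathcal{M}^u_p(\R)\|\le\|g\,|\,L_1(\R)\|\,\|h\,|\,\mathcal{M}^u_p(\R)\|$ on which you base (i) (and implicitly parts of (iv)). You correctly flag it as the main obstacle, but it is not merely delicate in the range $0<p<1$ --- it is false there, already for $\mathcal{M}^p_p=L_p$, since Young's inequality with $L_1$ breaks down below $p=1$. The repair is not a refinement of the $L_1$ estimate but a different tool: the Fourier multiplier inequality for band-limited functions, which in this paper is exactly Lemma \ref{l_ineq2}. Applied with $h=\varphi_k$ and $R\sim 2^{k}$ to the pieces $\cfi[\psi_j\cf f]$ (whose spectra sit in $B(0,2^{j+1})$), it gives
\begin{equation*}
\big\|\cfi[\varphi_k\psi_j\cf f]\,\big|\,\mathcal{M}^u_p(\R)\big\|\le C\,\|\varphi_k(2^{k}\cdot)\,|\,H_2^{\nu}(\R)\|\cdot\big\|\cfi[\psi_j\cf f]\,\big|\,\mathcal{M}^u_p(\R)\big\|,
\end{equation*}
and the dilation structure $\varphi_k=\varphi_1(2^{-(k-1)}\cdot)$ makes the multiplier norms uniformly bounded in $k$, after which your summation over $|j-k|\le1$ in weighted $\ell_q$ goes through unchanged. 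With that substitution your proof of (i) is complete; the same band-limited estimate (or the Nikol'skij-type inequality $\|g\|_{L_\infty}\lesssim R^{d/u}\|g\,|\,\mathcal{M}^u_p(\R)\|$ for $\supp\cf g\subset B(0,R)$) is also what justifies the exponent $\kappa$ in your proof of the embedding into $\mathcal{S}'(\R)$ when $p<1$.
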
 

\begin{proof}

(i) was proved in \cite{TangXu}, see theorem 2.8. The proofs of (ii) and (iii) are standard, see corollary 2.6. in \cite{KoYa}. (iv) was proved in \cite{SawTan}, see theorem 3.2.  (v) is obvious, see proposition 3.6. in \cite{SawTan}. 
\end{proof}

The Besov-Morrey spaces have many connections to other function spaces. For example they are embedded into some Besov or Triebel-Lizorkin-Morrey spaces. Results concerning this topic can be found in \cite{HaSkCE} or in \cite{HaSk}. For us it is interesting to know that in some cases the spaces $  \mathcal{N}^{s}_{u,p,q}(\mathbb{R}^{d}) $ also contain singular distributions. The following result is a combination of theorem 3.3 and theorem 3.4. from \cite{HaMoSk} and theorem 3.4. from \cite{HaMoSkarXiv}.

\begin{lem}\label{l_bp2}

Let $ s \in \mathbb{R}  $,  $ 0 < p \leq u < \infty $ and $ 0 < q \leq \infty$. Then we have $ \qquad \qquad $ $\mathcal{N}^{s}_{u,p,q}(\R) \not \subset L_{1}^{loc}(\R) $ if and only if we are in one of the following cases.

\begin{itemize}
\item[(i)]

We have $ s < \frac{p}{u} \sigma_p  $.

\item[(ii)]

We have $ s = \frac{p}{u} \sigma_p $ with $ 0 < p < 1 $ and $ q > 1 $.

\item[(iii)]

We have $ s = 0 $ with $ p \geq 2 $ and $ q > 2 $.

\item[(iv)]

We have $ s = 0 $ with $ 1 \leq p < 2 $ and $ q > p $.

\end{itemize}

\end{lem}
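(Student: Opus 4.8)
The statement is an \emph{if and only if}, so the plan is to prove both implications, and in each case the efficient framework is the dyadic decomposition $ f = \sum_{k} f_{k} $ with $ f_{k} := \mathcal{F}^{-1}[\varphi_{k}\mathcal{F}f] $ (equivalently, a wavelet/atomic decomposition) together with the sequence-space description of $ \Vert f \vert \mathcal{N}^{s}_{u,p,q}(\R)\Vert $. Since $ \mathcal{N}^{s}_{p,p,q}(\R) = B^{s}_{p,q}(\R) $ by Lemma \ref{l_bp1}(v), the case $ u = p $ is the classical Besov assertion, and the genuinely new content is the dependence of the threshold on the ratio $ p/u $; note that for $ p < 1 $ one has $ \frac{p}{u}\sigma_{p} = \frac{d}{u}(1-p) $, which is exactly the critical line appearing in Figure 1.

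For the necessity of being outside cases (i)--(iv) (that is, to show containment $ \mathcal{N}^{s}_{u,p,q}(\R)\subset L_{1}^{loc}(\R) $), I would argue directly on balls. When $ p \geq 1 $, H\"older's inequality and the very definition of the Morrey norm give, for any fixed ball $ B = B(y,r) $, the estimate $ \int_{B}\vert f_{k}\vert \, dx \lesssim \vert B\vert^{1-1/u}\Vert f_{k}\vert \mathcal{M}^{u}_{p}(\R)\Vert $, so that $ \int_{B}\vert f\vert\,dx \lesssim \vert B\vert^{1-1/u}\sum_{k}\Vert f_{k}\vert \mathcal{M}^{u}_{p}(\R)\Vert $. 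Writing $ a_{k} := 2^{ks}\Vert f_{k}\vert \mathcal{M}^{u}_{p}(\R)\Vert \in \ell_{q} $, the remaining sum $ \sum_{k}2^{-ks}a_{k} $ converges for every $ q $ as soon as $ s > 0 $ (by $ \ell_{q}\hookrightarrow\ell_{1} $ if $ q\leq 1 $, and by H\"older in $ k $ if $ q>1 $), which forces the obstruction to sit at $ s \leq 0 $, matching cases (i), (iii), (iv) for $ p\geq 1 $. For $ p < 1 $ one first applies a Nikol'skii-type inequality for frequency-localized functions in Morrey spaces to raise the integrability from $ p $ to an index $ \geq 1 $; the price is a factor $ 2^{k\gamma} $, and the sharp Sobolev/Nikol'skii embedding for Besov--Morrey spaces (of the type in \cite{HaSk}) produces precisely $ \gamma = \frac{p}{u}\sigma_{p} $, shifting the effective summability threshold to $ s > \frac{p}{u}\sigma_{p} $.

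For the sufficiency of (i)--(iv) (constructing $ f\in\mathcal{N}^{s}_{u,p,q}(\R)\setminus L_{1}^{loc}(\R) $), I would build extremal distributions from frequency-localized building blocks. A single bump of width $ 2^{-j} $ has Morrey norm insensitive to $ p $, so a purely radial homogeneous example such as $ \vert x\vert^{-d} $ only reaches the weaker threshold $ \frac{d}{u}(1-u) $; to attain the sharp line $ \frac{p}{u}\sigma_{p} $ one must superimpose, at each scale $ 2^{j} $, many bumps $ \psi_{j,m} $ spread over the unit ball with an extremal coefficient sequence, arranged so that the Morrey norm (which counts the activated bumps through the exponent $ p $) is balanced against the accumulating non-integrable mass. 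Membership in $ \mathcal{N}^{s}_{u,p,q}(\R) $ is then checked through the sequence-space norm, while $ f\notin L_{1}^{loc}(\R) $ is verified by testing against a suitable function or by analyzing the local growth; in the borderline cases (ii), (iii), (iv) the admissible decay of the coefficients is governed by $ \ell_{q} $--$ \ell_{q'} $ duality, which is exactly what yields the thresholds $ q>1 $, $ q>2 $ and $ q>p $.

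The main obstacle is the borderline analysis and the sharp dependence on $ q $: for $ s>\frac{p}{u}\sigma_{p} $ (and $ s>0 $) the elementary estimates above already settle containment, but on the critical lines $ s=\frac{p}{u}\sigma_{p} $ and $ s=0 $ the crude summation fails, and one must invoke refined Fourier-analytic (Hardy--Littlewood / Hausdorff--Young type) (non-)embeddings together with a construction whose extremal coefficient sequence is tuned to the exact summability threshold, and which genuinely fails local integrability rather than merely failing absolute convergence. This delicacy is precisely why the statement is assembled from the specialized results of \cite{HaMoSk} and \cite{HaMoSkarXiv} rather than established by the direct estimates sketched here.
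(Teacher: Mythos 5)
The paper does not prove this lemma at all: it is imported as a combination of Theorems 3.3 and 3.4 of \cite{HaMoSk} and Theorem 3.4 of \cite{HaMoSkarXiv}, so there is no in-paper argument to compare yours against. Judged on its own terms, your proposal is a plausible roadmap for the easier half but not a proof. The containment direction for $p\geq 1$, $s>0$ via H\"older on balls and $\ell_q\hookrightarrow\ell_1$ (or H\"older in $k$) is fine, and your heuristic for $p<1$ is essentially right: interpolating $\int_B|f_k|\,dx\leq \|f_k\|_{L_\infty}^{1-p}\int_B|f_k|^p\,dx$ with the Morrey--Nikol'skii bound $\|f_k\|_{L_\infty}\lesssim 2^{kd/u}\|f_k\,|\,\mathcal{M}^u_p(\R)\|$ for band-limited $f_k$ does produce the factor $2^{kd(1-p)/u}=2^{k\frac{p}{u}\sigma_p}$. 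But you state this as a black box rather than carrying it out, and, more importantly, such a summation argument only yields containment for $s$ \emph{strictly} above the critical lines; the lemma also asserts containment on the lines themselves ($s=\frac{p}{u}\sigma_p$ with $q\leq 1$ for $p<1$, and $s=0$ with $q\leq 2$ resp.\ $q\leq p$ for $p\geq 1$), which your crude summation cannot reach and for which you supply nothing concrete.

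The second, harder half --- exhibiting $f\in\mathcal{N}^s_{u,p,q}(\R)\setminus L_{1}^{loc}(\R)$ in each of (i)--(iv) --- is where the real gap lies. ``Superimpose many bumps with an extremal coefficient sequence'' is not a construction: the entire content of the sharp result is the precise placement and normalization of the atoms so that the Morrey quasi-norm (a supremum over all balls and all radii) stays finite while the local $L_1$ mass diverges, and this is exactly where the ratio $p/u$ enters. Moreover, your claimed mechanism for the borderline exponents is off in cases (iii) and (iv): the thresholds $q>2$ for $p\geq 2$ and $q>p$ for $1\leq p<2$ do not come from $\ell_q$--$\ell_{q'}$ duality but from Littlewood--Paley theory, specifically the embedding $B^0_{p,\min(p,2)}(\R)\hookrightarrow L_p(\R)$ and its sharpness. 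You concede at the end that the statement is really assembled from \cite{HaMoSk} and \cite{HaMoSkarXiv}; that concession is accurate, and it is also what the paper itself does, so the honest options are either to cite those results (as the author does) or to carry out the constructions in full --- the sketch as written does not substitute for either.
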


The following inequality is a very important tool. Let $ M $ denote the Hardy-Littlewood maximal operator. Then there is the following result.   

\begin{lem}\label{l_ineq1}

Let $ 1 < p \leq u < \infty    $. $ f $ is a locally Lebesgue-integrable function on $ \R $. Then there is a constant $ C > 0 $ independent of $ f  $ such that  
\begin{align*}
\Vert Mf \vert \mathcal{M}^{u}_{p}( \mathbb{R}^d) \Vert \leq C \Vert f \vert \mathcal{M}^{u}_{p}( \mathbb{R}^d) \Vert. 
\end{align*}
\end{lem}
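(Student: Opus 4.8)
The plan is to use the classical decomposition argument of Chiarenza and Frasca. Since the quantity $\Vert Mf \vert \mathcal{M}^{u}_{p}(\R)\Vert$ is a supremum over all balls, I would fix a ball $B(y,r)$ and split $f = f_1 + f_2$ with $f_1 := f\,\chi_{B(y,2r)}$ and $f_2 := f - f_1$. Exploiting the sublinearity of the maximal operator, $Mf \leq Mf_1 + Mf_2$ pointwise, together with Minkowski's inequality (available because $p \geq 1$), I would estimate the contributions of $f_1$ and $f_2$ to $\big(\int_{B(y,r)} |Mf|^{p}\big)^{1/p}$ separately and then recombine.

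For the near part the hypothesis $p>1$ is decisive: here I would invoke the boundedness of $M$ on $L_{p}(\R)$ to obtain $\int_{B(y,r)} |Mf_1|^{p} \leq \int_{\R} |Mf_1|^{p} \lesssim \int_{B(y,2r)} |f|^{p}$. Multiplying by $|B(y,r)|^{1/u - 1/p}$ and using that $|B(y,r)|$ and $|B(y,2r)|$ are comparable (so the nonpositive power $1/u - 1/p$ may be transferred from one radius to the other), this term is dominated by $\Vert f \vert \mathcal{M}^{u}_{p}(\R)\Vert$ straight from Definition \ref{def_mor}.

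For the far part the key observation is a uniform pointwise bound on $B(y,r)$. For $x \in B(y,r)$, any ball $B(x,\rho)$ with $\rho \leq r$ is contained in $B(y,2r)$, where $f_2$ vanishes; hence only radii $\rho > r$ contribute and $Mf_2(x) = \sup_{\rho > r} |B(x,\rho)|^{-1}\int_{B(x,\rho)} |f|$. Applying H\"older's inequality and the Morrey bound $\big(\int_{B(x,\rho)} |f|^{p}\big)^{1/p} \leq |B(x,\rho)|^{1/p - 1/u}\,\Vert f \vert \mathcal{M}^{u}_{p}(\R)\Vert$, each average is controlled by a constant times $\rho^{-d/u}\,\Vert f \vert \mathcal{M}^{u}_{p}(\R)\Vert$; since $\rho > r$, this gives $Mf_2(x) \lesssim r^{-d/u}\,\Vert f \vert \mathcal{M}^{u}_{p}(\R)\Vert$ on all of $B(y,r)$. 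Integrating over $B(y,r)$ and multiplying by $|B(y,r)|^{1/u - 1/p}$, the powers of $r$ cancel and this contribution is likewise bounded by $\Vert f \vert \mathcal{M}^{u}_{p}(\R)\Vert$.

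Taking the supremum over all balls $B(y,r)$ then yields the claim. I do not expect a genuine obstacle here, as the result is classical; the only delicate point, and the sole place where $p>1$ is used, is the near part, which rests on the $L_{p}$-boundedness of $M$ (this fails at $p=1$). The remainder of the argument is just scaling and H\"older's inequality.
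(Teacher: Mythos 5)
Your argument is correct and is precisely the classical Chiarenza--Frasca proof: the paper itself gives no proof of this lemma but simply cites Chiarenza and Frasca (and Theorem 6.19 in Sawano's book), and your near/far decomposition $f=f\chi_{B(y,2r)}+f_2$, with the $L_p$-boundedness of $M$ for the near part and the pointwise bound $Mf_2(x)\lesssim r^{-d/u}\Vert f\vert\mathcal{M}^{u}_{p}(\R)\Vert$ for the far part, is exactly the argument in that reference. All steps check out, including the transfer of the nonpositive exponent $\frac1u-\frac1p$ between the comparable balls $B(y,r)$ and $B(y,2r)$ and the cancellation of the powers of $r$ in the far part.
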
 

\begin{proof}
This result can be found in \cite{ChiFra}, see also theorem 6.19 in \cite{SawBook}. 
\end{proof}
 
Before we can write down the next result we have to introduce some additional notation. Let $ \nu \in \mathbb{R} $. Then $ H_{2}^{\nu}  (\R) $ denotes a Bessel-potential space with 
\begin{center}
$ \Vert f \vert H_{2}^{\nu}  (\R) \Vert =  \Vert (1 + \vert \xi \vert ^{2} )^{\frac{\nu}{2}} ( \cf f )(\xi )  \vert L_{2}(\R)  \Vert $ .
\end{center}

Now we are able to give the next lemma which can be found in \cite{SawTan}, see theorem 2.4. Moreover the following lemma is a special case of theorem 2.7 in \cite{TangXu}. Here a slightly different formulation is used. 

\begin{lem}\label{l_ineq2}

Let $ 0 < p \leq u < \infty $ and $ \nu > \frac{d}{\min(1,p)} + \frac{d}{2}   $. Let $ h \in H_{2}^{\nu}  (\R)$ and $ R > 0 $.  $ f \in \mathcal{M}^{u}_{p}( \mathbb{R}^d)   $ is a function with $ f \in \mathcal{S}'(\R) $ and $ \supp \cf f \subset B(0,R)$. Then there is a constant $ C > 0 $ independent of $ R, h $ and $ f $ such that
\begin{align*}
\Big \Vert  (2 \pi)^{\frac{d}{2}} \int_{\R} \cfi h(x - y) f(y) dy   \Big \vert \mathcal{M}^{u}_{p}( \mathbb{R}^d) \Big \Vert \leq C \Vert h(R \cdot ) \vert H_{2}^{\nu}  (\R) \Vert \cdot  \Vert f   \vert \mathcal{M}^{u}_{p}( \mathbb{R}^d)  \Vert.
\end{align*}
  
\end{lem}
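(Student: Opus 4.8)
The plan is to regard the operator as a Fourier multiplier and to reduce the Morrey estimate to a pointwise bound by the Hardy--Littlewood maximal function, after which Lemma \ref{l_ineq1} finishes the argument. Writing $\psi := \cfi h$, the quantity to estimate is $Tf(x) = (2\pi)^{\frac{d}{2}}\int_{\R}\psi(x-y)f(y)\,dy$, which is well defined because $f \in \cm^u_p(\R) \subset L_1^{loc}(\R)$ and $\psi$ is continuous with rapid decay once $\nu$ is large. Since $\supp \cf f \subset B(0,R)$, the natural device is the Peetre maximal function $f^*_a(x) := \sup_{z\in\R}|f(x-z)|(1+R|z|)^{-a}$, and I would first fix exponents $t$ and $a$ with $t < \min(1,p)$ and $d/t < a < \nu - d/2$. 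Such a choice exists precisely because the hypothesis reads $\nu > \frac{d}{\min(1,p)} + \frac{d}{2}$; this is the point where the assumption on $\nu$ enters.

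With these exponents fixed, the core step is the pointwise inequality. Factoring $(1+R|z|)^a$ out of the convolution (after the substitution $z = x-y$) gives
\begin{align*}
|Tf(x)| \leq (2\pi)^{\frac{d}{2}}\Big(\int_{\R}|\psi(z)|(1+R|z|)^a\,dz\Big)\, f^*_a(x).
\end{align*}
Two facts then complete this step. First, the weighted integral of $\psi$ is controlled by the multiplier norm: after the substitution $w = Rz$ it equals $\int_{\R}|\cfi[h(R\cdot)](w)|(1+|w|)^a\,dw$, which is independent of $R$, and Cauchy--Schwarz together with Plancherel bound it by $C\,\Vert h(R\cdot)\vert H_2^\nu(\R)\Vert$; here the condition $a < \nu - d/2$ guarantees convergence of $\int_{\R}(1+|w|)^{2(a-\nu)}\,dw$. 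Second, the Peetre maximal inequality for band-limited functions, valid because $\supp \cf f \subset B(0,R)$ and $a > d/t$, yields $f^*_a(x) \leq C\,(M(|f|^t)(x))^{1/t}$ with $C$ independent of $R$. Combining the two gives the pointwise bound $|Tf(x)| \leq C\,\Vert h(R\cdot)\vert H_2^\nu(\R)\Vert\,(M(|f|^t)(x))^{1/t}$.

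Finally I would pass to the Morrey quasi-norm by the standard power trick. Using the homogeneity $\Vert |g|^t \,\vert\, \cm^{u/t}_{p/t}(\R)\Vert = \Vert g\vert \cm^u_p(\R)\Vert^t$ and the fact that $t < \min(1,p)$ forces $p/t > 1$, Lemma \ref{l_ineq1} applied on $\cm^{u/t}_{p/t}(\R)$ gives
\begin{align*}
\big\Vert (M(|f|^t))^{1/t}\,\big\vert\,\cm^u_p(\R)\big\Vert = \big\Vert M(|f|^t)\,\big\vert\,\cm^{u/t}_{p/t}(\R)\big\Vert^{1/t} \leq C\,\big\Vert |f|^t\,\big\vert\,\cm^{u/t}_{p/t}(\R)\big\Vert^{1/t} = C\,\Vert f\vert \cm^u_p(\R)\Vert,
\end{align*}
and inserting the pointwise estimate finishes the proof. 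I expect the main obstacle to be the pointwise step, and more precisely the bookkeeping of the dilation by $R$: one must check that both the Cauchy--Schwarz estimate for $\psi$ and the Peetre inequality produce constants genuinely independent of $R$, so that the scaling built into $\Vert h(R\cdot)\vert H_2^\nu(\R)\Vert$ is matched exactly by the band-limit $B(0,R)$ of $f$. Establishing the Peetre maximal inequality in the required $R$-uniform form (via a reproducing formula for band-limited functions) is the one ingredient that needs genuine care.
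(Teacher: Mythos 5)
Your argument is correct and is precisely the standard proof of this convolution inequality: the paper itself does not prove the lemma but cites Sawano--Tanaka (Theorem 2.4) and Tang--Xu (Theorem 2.7), whose proofs run exactly along your lines (Peetre maximal function, Cauchy--Schwarz and Plancherel for the weighted integral of $\cfi h$, the maximal inequality $f^{*}_{a}(x)\leq C\,(M(|f|^{t})(x))^{1/t}$, and the Chiarenza--Frasca bound of Lemma \ref{l_ineq1} via the power trick). Your parameter bookkeeping --- $t<\min(1,p)$, $d/t<a<\nu-\frac{d}{2}$, and the $R$-uniformity obtained by matching the dilation $h(R\cdot)$ against the band limit $B(0,R)$ --- is exactly where the hypothesis $\nu>\frac{d}{\min(1,p)}+\frac{d}{2}$ is used, and it is correct.
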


It is also possible to define the Besov-Morrey spaces on domains. Here we want to work with smooth and bounded domains only. We use the following definition, whereby $ \mathcal{D}'(\Omega)  $ denotes the space of distributions on $  \Omega \subset \R $ as usual. 

\begin{defi}\label{def_bmD}

Let $ 0 < p \leq u < \infty $, $ 0 < q \leq \infty $ and $ s \in \mathbb{R} $. Let $ \Omega \subset \R$ be a bounded $ C^{\infty} $ domain. Then we define

\begin{center}
$ \mathcal{N}^{s}_{u,p,q}(\Omega) = \left \lbrace f \in \mathcal{D}'(\Omega) \; : \; f = g \; \mbox{in} \; \; \Omega \; \; \mbox{for some} \; g \in       \mathcal{N}^{s}_{u,p,q}(\R)  \right \rbrace $
\end{center}

and

\begin{center}
$ \Vert f \vert \mathcal{N}^{s}_{u,p,q}(\Omega) \Vert = \inf \left \lbrace    \Vert g \vert \mathcal{N}^{s}_{u,p,q}(\R) \Vert \; : \; f = g \; \mbox{in} \; \; \Omega \; \; \mbox{for} \; g \in \mathcal{N}^{s}_{u,p,q}(\R) \right \rbrace. $
\end{center}
 
\end{defi}

The properties of Besov-Morrey spaces on domains were investigated in \cite{Saw}. For our purposes the following result of Haroske and Skrzypczak that can be found in \cite{HaSkSM} is of special interest. 

\begin{lem}\label{l_bmD1}

Let $ 0 < p \leq u < \infty $, $ 0 < q \leq \infty   $ and $ 1 \leq v < \infty $ with $ p < v $. Let $ s \in \mathbb{R} $. Then the embedding $ \mathcal{N}^{s}_{u,p,q}(\Omega) \hookrightarrow L_{v}(\Omega) $ implies 
\begin{align*}
s \geq   d  \frac{p}{u} \left ( \frac{1}{p} - \frac{1}{v}    \right ).  
\end{align*} 
\end{lem}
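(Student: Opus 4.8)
The plan is to prove this by a scaling (test function) argument. Assuming the embedding $\mathcal{N}^{s}_{u,p,q}(\Omega)\hookrightarrow L_{v}(\Omega)$ holds with some constant $C>0$, I would construct a family $(f_{j})_{j}$ of test functions supported in a fixed ball inside $\Omega$ for which the ratio $\Vert f_{j}\vert L_{v}(\Omega)\Vert/\Vert f_{j}\vert\mathcal{N}^{s}_{u,p,q}(\Omega)\Vert$ grows like $2^{j(d\frac{p}{u}(\frac1p-\frac1v)-s)}$ as $j\to\infty$. Boundedness of this ratio, forced by the embedding, then yields $s\ge d\frac{p}{u}(\frac1p-\frac1v)$. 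The whole point is that a single bump only detects the weaker exponent $d(\frac1u-\frac1v)$, so the factor $\frac{p}{u}$ has to be manufactured by \emph{spreading out} many atoms, exploiting the supremum over balls in the Morrey norm.

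For the construction I would fix a ball $B(x_{0},\rho_{0})\subset\Omega$ and a profile $\psi\in C_{0}^{\infty}(\R)$, $\psi\not\equiv0$, supported in the unit ball and with vanishing moments up to a large order, so that its $2^{-j}$-dilates are admissible atoms for $\mathcal{N}^{s}_{u,p,q}(\R)$ regardless of the value of $s$. For each large $j$ I would place $M_{j}\approx(2^{j}\rho_{0})^{d(1-p/u)}$ translated dilates $\psi(2^{j}(\cdot-x_{j,m}))$, with centers $x_{j,m}$ spread uniformly over $B(x_{0},\rho_{0})$. Since $M_{j}$ lies below the packing bound $(2^{j}\rho_{0})^{d}$, the atoms have pairwise disjoint supports and mutual separation $\gg2^{-j}$. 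Put $f_{j}:=\sum_{m}\psi(2^{j}(\cdot-x_{j,m}))\in C_{0}^{\infty}(\Omega)$; by Lemma \ref{l_bp1}(iv) these functions belong to $\mathcal{N}^{s}_{u,p,q}(\R)$.

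The number $M_{j}$ is engineered to balance the two regimes of the Morrey supremum. A direct computation with Definition \ref{def_mor}, using disjointness of supports, shows that both the single-atom balls of radius $\sim2^{-j}$ and the full ball $B(x_{0},\rho_{0})$ contribute the same order $2^{-jd/u}$, while intermediate radii contribute less; hence $\Vert f_{j}\vert\mathcal{M}^{u}_{p}(\R)\Vert\approx2^{-jd/u}$, with the multiplicity invisible. By contrast, disjoint supports give exactly $\Vert f_{j}\vert L_{v}(\Omega)\Vert\approx M_{j}^{1/v}2^{-jd/v}\approx2^{-jdp/(uv)}$, so here the multiplicity is fully felt. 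For the Besov-Morrey norm I would invoke the atomic (synthesis) decomposition of $\mathcal{N}^{s}_{u,p,q}(\R)$: because all atoms sit at the single frequency level $j$ and carry enough vanishing moments, the pieces $\mathcal{F}^{-1}[\varphi_{k}\mathcal{F}f_{j}]$ decay rapidly in $|k-j|$, so that $\Vert f_{j}\vert\mathcal{N}^{s}_{u,p,q}(\R)\Vert\lesssim2^{js}\Vert f_{j}\vert\mathcal{M}^{u}_{p}(\R)\Vert\approx2^{j(s-d/u)}$.

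Finally I would chain these estimates with the trivial domain reductions $\Vert f_{j}\vert L_{v}(\Omega)\Vert=\Vert f_{j}\vert L_{v}(\R)\Vert$ (since $\supp f_{j}\subset\Omega$) and $\Vert f_{j}\vert\mathcal{N}^{s}_{u,p,q}(\Omega)\Vert\le\Vert f_{j}\vert\mathcal{N}^{s}_{u,p,q}(\R)\Vert$ (the function is its own extension). The assumed embedding then gives
\[
2^{-jdp/(uv)}\;\lesssim\;\Vert f_{j}\vert L_{v}(\Omega)\Vert\;\le\;C\,\Vert f_{j}\vert\mathcal{N}^{s}_{u,p,q}(\Omega)\Vert\;\lesssim\;2^{j(s-d/u)},
\]
and letting $j\to\infty$ forces $-dp/(uv)\le s-d/u$, i.e. $s\ge d\frac{p}{u}(\frac1p-\frac1v)$. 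The main obstacle is precisely the upper bound for $\Vert f_{j}\vert\mathcal{N}^{s}_{u,p,q}(\R)\Vert$: one must show that the Besov-Morrey norm is blind to the multiplicity $M_{j}$ (capped by the Morrey supremum) while $L_{v}$ accumulates the factor $M_{j}^{1/v}$. This asymmetry, combined with the exact calibration $M_{j}\sim(2^{j}\rho_{0})^{d(1-p/u)}$, is what produces the factor $\frac{p}{u}$; the $L_{v}$ lower bound and the domain reductions are routine.
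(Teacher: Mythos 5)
Your proposal is correct, and it is worth noting that the paper itself does not prove this lemma at all: it simply cites Proposition 5.3 of Haroske--Skrzypczak \cite{HaSkSM} (with the case $p=u$ referred to \cite{RS}, \cite{sitr}). So you are supplying a self-contained argument where the paper defers to the literature; your construction -- a single-level family of $M_{j}\approx(2^{j}\rho_{0})^{d(1-p/u)}$ disjoint dilated atoms, calibrated so that the small-ball and large-ball regimes of the Morrey supremum balance at $2^{-jd/u}$ while $L_{v}$ still accumulates the factor $M_{j}^{1/v}$ -- is exactly the standard extremal-function device for sharpness results in Morrey-smoothness spaces, and is in the same spirit as the cited reference. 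Two points you should make explicit rather than gesture at: first, the upper bound $\Vert f_{j}\vert\mathcal{N}^{s}_{u,p,q}(\R)\Vert\lesssim 2^{j(s-d/u)}$ follows directly from Lemma \ref{lem_atom} of the paper applied to the one-level sequence $\lambda_{j,k}\in\{0,1\}$, for which $\Vert\lambda\vert{\bf n}^{s}_{u,p,q}(\R)\Vert\approx 2^{j(s-d/u)}\,2^{jd/u}\,\Vert\sum_{k}\chi_{j,k}\Vert_{\mathcal{M}^{u}_{p}(\R)}\approx 2^{j(s-d/u)}$; the remark about rapid decay of the Littlewood--Paley pieces is then unnecessary. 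Second, in the Morrey supremum you should actually verify the intermediate radii $2^{-j}\le r\le\rho_{0}$: a ball of radius $r$ meets about $M_{j}(r/\rho_{0})^{d}$ atoms, giving a contribution $\sim r^{d/u}M_{j}^{1/p}\rho_{0}^{-d/p}2^{-jd/p}$, which is increasing in $r$ and hence dominated by the endpoint $r=\rho_{0}$; this confirms $\Vert f_{j}\vert\mathcal{M}^{u}_{p}(\R)\Vert\approx 2^{-jd/u}$. With these details filled in, the chain $2^{-jdp/(uv)}\lesssim C\,2^{j(s-d/u)}$ and the limit $j\to\infty$ give precisely $s\ge d\frac{p}{u}\bigl(\frac{1}{p}-\frac{1}{v}\bigr)$, including the degenerate case $p=u$ where $M_{j}=1$.
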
 

\begin{proof}
This result can be found in \cite{HaSkSM}, see proposition 5.3. The special case $ p = u $ can be found in \cite{RS}, see also \cite{sitr}.  
\end{proof}

Also the following tool is very useful. Let $ C(\R) $ be the space of all complex-valued bounded uniformly continuous functions on $ \R $. For $  m \in \mathbb{N} $ we put 

\begin{center}
$ C^{m}(\R) = \lbrace f : D^{\alpha}f \in C(\R) \; \mbox{for all} \; \vert \alpha \vert \leq m \rbrace  $.
\end{center}

\begin{lem}\label{l_bmD2}

Let $ 0 < p \leq u < \infty $, $ 0 < q \leq \infty $ and $ s \in \mathbb{R}  $. Let $ m \in \mathbb{N} $ be sufficiently large. Then there exists a positive constant $ C(m) $ such that for all $ g \in C^{m}(\R) $ and for all $ f \in \mathcal{N}^{s}_{u,p,q}(\R) $ we have

\begin{center}
$\Vert f \cdot g \vert  \mathcal{N}^{s}_{u,p,q}(\R)   \Vert \leq C(m) \Big ( \sum_{\vert \alpha \vert \leq m}  \Vert D^{\alpha} g \vert L_{\infty}(\R) \Vert  \Big ) \; \Vert f  \vert  \mathcal{N}^{s}_{u,p,q}(\R)   \Vert.$
\end{center}

\end{lem}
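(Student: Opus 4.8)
The plan is to argue by a Fourier-analytic paraproduct decomposition, using only the band-limited multiplier estimate of Lemma \ref{l_ineq2} together with the quasi-triangle inequality of Lemma \ref{l_bp1}(iii). Throughout write $f^{j}:=\mathcal{F}^{-1}[\varphi_{j}\mathcal{F}f]$, $g^{l}:=\mathcal{F}^{-1}[\varphi_{l}\mathcal{F}g]$ and $S_{N}g:=\sum_{l=0}^{N}g^{l}$, and abbreviate $A_{g}:=\sum_{|\alpha|\leq m}\Vert D^{\alpha}g\vert L_{\infty}(\R)\Vert$. Since $g\in C^{m}(\R)\subset L_{\infty}(\R)\subset\mathcal{S}'(\R)$, both decompositions are available, and for $m$ sufficiently large (depending on $s,u,p,q,d$) the product $f\cdot g$ is a well-defined tempered distribution for which the splitting below holds in $\mathcal{S}'(\R)$. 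The smoothness of $g$ enters only through the Bernstein-type bounds $\Vert g^{l}\vert L_{\infty}(\R)\Vert\leq C\,2^{-lm}A_{g}$ for $l\geq 1$ and $\Vert S_{N}g\vert L_{\infty}(\R)\Vert\leq C\,\Vert g\vert L_{\infty}(\R)\Vert\leq C\,A_{g}$, both obtained by realizing the relevant Fourier projections as $L_{1}$-convolutions. I would then decompose
\[
f\cdot g \;=\; \underbrace{\sum_{j}f^{j}\,S_{j-2}g}_{=:\,\Pi_{1}}\;+\;\underbrace{\sum_{l}(S_{l-2}f)\,g^{l}}_{=:\,\Pi_{2}}\;+\;\underbrace{\sum_{|j-l|\leq 1}f^{j}g^{l}}_{=:\,\Pi_{3}}
\]
and estimate the three pieces in $\mathcal{N}^{s}_{u,p,q}(\R)$ separately.

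The technical workhorse is the following consequence of Lemma \ref{l_ineq2}. If $(G^{j})_{j}$ satisfies $\supp\mathcal{F}G^{j}\subset B(0,c\,2^{j})$, then applying the lemma with $h=\varphi_{k}$ and $R\approx 2^{j}$ gives $\Vert\mathcal{F}^{-1}[\varphi_{k}\mathcal{F}G^{j}]\vert\mathcal{M}^{u}_{p}(\R)\Vert\leq C\,\Vert G^{j}\vert\mathcal{M}^{u}_{p}(\R)\Vert$, uniformly in the regime $j\geq k-c_{0}$, since there the rescaled norms $\Vert\varphi_{k}(R\,\cdot)\vert H_{2}^{\nu}(\R)\Vert$ are bounded (a direct scaling computation, choosing $\nu$ large). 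When the supports are annuli $\{c_{1}2^{j}\leq|\xi|\leq c_{2}2^{j}\}$, the projection $\mathcal{F}^{-1}[\varphi_{k}\mathcal{F}G^{j}]$ vanishes unless $|j-k|\leq C$, and summing over $k$ via Lemma \ref{l_bp1}(iii) yields the \emph{annulus estimate}
\[
\Big\Vert\sum_{j}G^{j}\,\Big\vert\,\mathcal{N}^{s}_{u,p,q}(\R)\Big\Vert\;\leq\;C\Big(\sum_{j}2^{jsq}\Vert G^{j}\vert\mathcal{M}^{u}_{p}(\R)\Vert^{q}\Big)^{1/q},
\]
valid for \emph{every} $s\in\mathbb{R}$. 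For $\Pi_{1}$ the summands have annulus spectrum of size $2^{j}$ and $\Vert f^{j}S_{j-2}g\vert\mathcal{M}^{u}_{p}(\R)\Vert\leq\Vert S_{j-2}g\vert L_{\infty}(\R)\Vert\,\Vert f^{j}\vert\mathcal{M}^{u}_{p}(\R)\Vert\lesssim A_{g}\Vert f^{j}\vert\mathcal{M}^{u}_{p}(\R)\Vert$, so the annulus estimate gives $\Vert\Pi_{1}\vert\mathcal{N}^{s}_{u,p,q}(\R)\Vert\lesssim A_{g}\Vert f\vert\mathcal{N}^{s}_{u,p,q}(\R)\Vert$ at once. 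For $\Pi_{2}$ the summands again have annulus spectrum of size $2^{l}$, with $\Vert(S_{l-2}f)g^{l}\vert\mathcal{M}^{u}_{p}(\R)\Vert\leq\Vert g^{l}\vert L_{\infty}(\R)\Vert\,\Vert S_{l-2}f\vert\mathcal{M}^{u}_{p}(\R)\Vert\lesssim 2^{-lm}A_{g}\sum_{i\leq l-2}\Vert f^{i}\vert\mathcal{M}^{u}_{p}(\R)\Vert$; inserting this into the annulus estimate gives a double sum which collapses by Young's inequality for sequences, the geometric factor $2^{-lm}$ producing an $\ell^{\min(1,q)}$-summable kernel as soon as $m>s$.

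The main obstacle is the resonant term $\Pi_{3}$. Its summands $f^{j}g^{l}$ with $|j-l|\leq 1$ have spectrum in a full ball $B(0,c\,2^{j})$ rather than in an annulus, so at level $k$ every index $j\gtrsim k$ contributes and the annulus estimate is unavailable. The uniform multiplier bound above still gives $\Vert\mathcal{F}^{-1}[\varphi_{k}\mathcal{F}\Pi_{3}]\vert\mathcal{M}^{u}_{p}(\R)\Vert\lesssim\sum_{j\geq k-c}\Vert f^{j}g^{j}\vert\mathcal{M}^{u}_{p}(\R)\Vert\lesssim A_{g}\sum_{j\geq k-c}2^{-jm}\Vert f^{j}\vert\mathcal{M}^{u}_{p}(\R)\Vert$, where the decisive decay $2^{-jm}$ is exactly the smoothness gain of $g$. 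Writing $a_{j}:=2^{js}\Vert f^{j}\vert\mathcal{M}^{u}_{p}(\R)\Vert$, the sequence $2^{ks}\sum_{j\geq k-c}2^{-jm}\Vert f^{j}\vert\mathcal{M}^{u}_{p}(\R)\Vert$ is a convolution of $(a_{j})\in\ell^{q}$ against the kernel $2^{-i(s+m)}\mathbf{1}_{\{i\geq -c\}}$ (with a harmless extra factor $2^{-km}\leq 1$), which lies in $\ell^{\min(1,q)}$ precisely when $m>-s$. Thus it is here that $m$ must be taken large, namely $m>|s|$ together with the size needed for the Bernstein and multiplier estimates; then $\Vert\Pi_{3}\vert\mathcal{N}^{s}_{u,p,q}(\R)\Vert\lesssim A_{g}\Vert f\vert\mathcal{N}^{s}_{u,p,q}(\R)\Vert$. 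Combining the three bounds through Lemma \ref{l_bp1}(iii) (with $q=\infty$ handled by the usual supremum modification) yields $\Vert f\cdot g\vert\mathcal{N}^{s}_{u,p,q}(\R)\Vert\leq C(m)\,A_{g}\,\Vert f\vert\mathcal{N}^{s}_{u,p,q}(\R)\Vert$, which is the assertion.
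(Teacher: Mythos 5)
The paper itself gives no proof of this lemma: it is quoted from \cite{HaSkSM} (Theorem 2.6), with further details in \cite{Saw}. Your paraproduct decomposition is in spirit the standard route taken in those references, and most of it is sound: the splitting $fg=\Pi_{1}+\Pi_{2}+\Pi_{3}$, the Bernstein bounds $\Vert g^{l}\vert L_{\infty}(\R)\Vert\lesssim 2^{-lm}A_{g}$, the annulus estimate, and the Young-type bookkeeping for $\Pi_{1}$ and $\Pi_{2}$ all go through.

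There is, however, a genuine error in the treatment of the resonant term $\Pi_{3}$. You claim that for $\supp\mathcal{F}G^{j}\subset B(0,c2^{j})$ the bound $\Vert\mathcal{F}^{-1}[\varphi_{k}\mathcal{F}G^{j}]\vert\mathcal{M}^{u}_{p}(\R)\Vert\leq C\Vert G^{j}\vert\mathcal{M}^{u}_{p}(\R)\Vert$ holds uniformly in the regime $j\geq k-c_{0}$ because ``the rescaled norms $\Vert\varphi_{k}(R\cdot)\vert H_{2}^{\nu}(\R)\Vert$ are bounded''. They are not: with $R\approx 2^{j}$ the function $x\mapsto\varphi_{k}(Rx)$ is supported on a ball of radius $\approx 2^{k-j}$ with derivatives of order $\alpha$ of size $2^{(j-k)|\alpha|}$, so a direct scaling computation gives $\Vert\varphi_{k}(R\cdot)\vert H_{2}^{\nu}(\R)\Vert\approx 2^{(j-k)(\nu-d/2)}$, which diverges as $j-k\to\infty$; and Lemma \ref{l_ineq2} forces $\nu>\frac{d}{\min(1,p)}+\frac{d}{2}$, so choosing $\nu$ large only makes this worse. (The uniform bound is available precisely in the annulus regime $|j-k|\leq C$, which is why your $\Pi_{1}$, $\Pi_{2}$ estimates are unaffected.) The argument is repairable: Lemma \ref{l_ineq2} still yields $\Vert\mathcal{F}^{-1}[\varphi_{k}\mathcal{F}(f^{j}g^{j})]\vert\mathcal{M}^{u}_{p}(\R)\Vert\lesssim 2^{(j-k)(\nu-d/2)}\Vert f^{j}g^{j}\vert\mathcal{M}^{u}_{p}(\R)\Vert$, and the factor $2^{-jm}$ coming from $g^{j}$ absorbs this polynomial loss provided $m>\nu-\frac{d}{2}-s$. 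Since the lemma only asserts the estimate for $m$ sufficiently large, the conclusion survives, but your stated threshold $m>|s|$ is too small: $m$ must in addition dominate a quantity of size $\frac{d}{\min(1,p)}$. A smaller point is that the convergence in $\mathcal{S}'(\R)$ of the three series and the identity $fg=\Pi_{1}+\Pi_{2}+\Pi_{3}$, i.e.\ the very definition of the product $f\cdot g$ for $f\in\mathcal{S}'(\R)$, are asserted rather than proved; this is standard but should be justified via $fg=\lim_{N\to\infty}S_{N}f\cdot S_{N}g$.
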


\begin{proof}
This result can be found in \cite{HaSkSM}, see theorem 2.6. More details can be found in \cite{Saw}. A related result can be found in \cite{ysy}, see theorem 6.1.      
\end{proof}

It is possible to describe the Besov-Morrey spaces by means of atomic decompositions. Details concerning this topic can be found in \cite{Ros} and in \cite{SawTan}. A short summary of the main ideas also can be found in \cite{HaMoSk}. For $ j \in \mathbb{Z} $ and $ k \in \mathbb{Z}^{d}  $ we define the dyadic cube $ Q_{j,k} = 2^{-j}([0,1)^{d} + k) $. By $ \chi_{j,k} $ we denote the characteristic function of the cube $ Q_{j,k} $. For $ 0 < u < \infty $ we put $ \chi_{j,k}^{(u)} = 2^{\frac{jd}{u}} \chi_{j,k} $. Now we are able to define what atoms are.

\begin{defi}\label{def_at}
Let $ s \in \mathbb{R} $, $ 0 < p \leq u < \infty $ and $ 0 < q \leq \infty $. Let $ K \in \mathbb{N}_{0} $ and $ L \in \mathbb{N}_{0} \cup \{ -1 \} $. Then for $ j \in  \mathbb{N}_{0}  $ and $ k \in  \mathbb{Z}^{d}   $ a collection of $ L_{\infty}(\R)  $-functions $ a_{j,k} $  is a family of $ ( K , L ) $-atoms if there are constants $ C_{1} > 1 $ and $ C_{2} > 0 $ such that the following properties are fulfilled. 

\begin{itemize}
\item[(i)]
We have $ \supp a_{j,k} \subset C_{1} Q_{j,k} $.

\item[(ii)]
For $  \vert \alpha \vert \leq K $ all derivatives $ D^{\alpha} a_{j,k} $ exist and we have $ \Vert D^{\alpha} a_{j,k} \vert L_{\infty}(\R) \Vert \leq C_{2} 2^{j \vert \alpha \vert}   $.

\item[(iii)]
For $ \vert \beta \vert \leq L $ we have $ \int_{\R} x^{\beta} a_{j,k}(x) dx = 0  $. In the case $ L = -1  $ this condition is empty.

\end{itemize}

\end{defi}

Moreover we are able to define the following sequence spaces.

\begin{defi}\label{def_seq_sp}
Let $ s \in \mathbb{R} $, $ 0 < p \leq u < \infty $ and $ 0 < q \leq \infty $. Then the sequence space $ {\bf n}^{s}_{u,p,q}(\R)  $ is defined to be the set of all sequences $ \lambda = \{ \lambda_{j,k}   \}_{j \in \mathbb{N}_{0} , k \in \mathbb{Z}^{d}} \subset \mathbb{C}  $ such that
\begin{align*} 
\Vert \lambda \vert {\bf n}^{s}_{u,p,q}(\mathbb{R}^{d})  \Vert :=  \Big ( \sum_{j = 0}^{\infty} 2^{jq ( s - \frac{d}{u}  )} \Big   \Vert \sum_{k \in \mathbb{Z}^{d}} \vert \lambda_{j,k} \vert \chi_{j,k}^{(u)}(x)  \Big \vert \mathcal{M}^{u}_{p}(\R) \Big  \Vert  ^{q} \Big   )^{\frac{1}{q}} < \infty .
\end{align*}

In the case $ q = \infty $ the usual modifications have to be made. 

\end{defi}

Using this notation we can formulate the following very important result that can be found in \cite{Ros}, see theorems 2.30 and 2.36, or in \cite{SawTan}, see corollary 4.10 and theorem 4.12. 

\begin{lem}\label{lem_atom}
Let $ s \in \mathbb{R} $, $ 0 < p \leq u < \infty $ and $ 0 < q \leq \infty $. Let $ K \in \mathbb{N}_{0} $ and $ L \in \mathbb{N}_{0} \cup \{ -1 \} $ such that $ K \geq \max(0 , s + 1 ) $ and $ L \geq \max( - 1 , \sigma_{p} - s )   $. Then there exists a constant $ C > 0 $ such that for all families $ \{ a_{j,k}   \}_{j \in \mathbb{N}_{0} , k \in \mathbb{Z}^{d}} $ of $  ( K , L ) $-atoms and all sequences $ \lambda = \{ \lambda_{j,k}   \}_{j \in \mathbb{N}_{0} , k \in \mathbb{Z}^{d}} \in {\bf n}^{s}_{u,p,q}(\R)   $ we have
\begin{align*}
\Big \Vert  \sum_{ j = 0}^{\infty} \sum_{k \in \mathbb{Z}^{d}} \lambda_{j,k}  a_{j,k}  \Big \vert \mathcal{N}^{s}_{u,p,q}(\R) \Big \Vert \leq C \Vert \lambda \vert {\bf n}^{s}_{u,p,q}(\mathbb{R}^{d})  \Vert.
\end{align*} 

\end{lem}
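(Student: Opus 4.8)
The plan is to work with the Fourier-analytic definition of $ \mathcal{N}^{s}_{u,p,q}(\R) $ from Definition \ref{def_bms}, so that everything reduces to estimating $ \Vert \cfi[\varphi_\ell \cf f] \vert \mathcal{M}^{u}_{p}(\R) \Vert $ for each frequency index $ \ell \in \mathbb{N}_0 $, where $ f = \sum_{j=0}^{\infty}\sum_{k} \lambda_{j,k} a_{j,k} $. Setting $ K_\ell := \cfi \varphi_\ell $, linearity and the convolution theorem give
\begin{align*}
\cfi[\varphi_\ell \cf f] = \sum_{j=0}^{\infty} \sum_{k \in \mathbb{Z}^d} \lambda_{j,k}\, (K_\ell * a_{j,k}),
\end{align*}
so the whole estimate rests on a good pointwise bound for the single building block $ K_\ell * a_{j,k} $, followed by a summation first over $ k $ and then over $ \ell $ against the weight $ 2^{\ell s} $.

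First I would prove a two-sided estimate for $ K_\ell * a_{j,k} $ depending on whether the frequency scale lies above or below the atom scale. For $ \ell \ge j $ I would Taylor expand the atom and use that $ K_\ell $ (for $ \ell \ge 1 $) has vanishing moments of every order together with property (ii) of Definition \ref{def_at}: estimating the $ K $-th order remainder by $ 2^{jK} $ and using $ \int |K_\ell(y)|\, |y|^K\, dy \ls 2^{-\ell K} $ produces a factor $ 2^{-(\ell-j)K} $. For $ \ell \le j $ I would instead Taylor expand $ K_\ell $ about the centre of $ C_1 Q_{j,k} $ and invoke the moment conditions (iii) up to order $ L $, which produces a factor $ 2^{-(j-\ell)(L+1+d)} $. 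In both cases the rapid decay of $ K_\ell $ away from $ C_1 Q_{j,k} $ confines the result to a bump adapted to $ Q_{j,k} $ of envelope scale $ 2^{-\min(\ell,j)} $ carrying geometric decay in $ |\ell-j| $ (rate $ K $ for $ \ell \ge j $, rate $ L+1+d $ for $ \ell \le j $).

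Next I would sum over $ k $. Fixing $ 0 < t < \min(1,p) $, the standard maximal-function trick bounds $ \sum_k |\lambda_{j,k}|\, |K_\ell * a_{j,k}(x)| $ by $ 2^{-|\ell-j|\delta}\, 2^{-jd/u} $ times $ \big( M[(\sum_k |\lambda_{j,k}| \chi_{j,k}^{(u)})^t](x)\big)^{1/t} $, where $ M $ is the Hardy--Littlewood maximal operator and the bump decay exponent must satisfy $ Mt > d $, which holds once $ K $ is large. Passing to the Morrey quasi-norm I would use the scaling identity $ \Vert g \vert \mathcal{M}^{u}_{p}(\R)\Vert = \Vert |g|^t \vert \mathcal{M}^{u/t}_{p/t}(\R)\Vert^{1/t} $ together with Lemma \ref{l_ineq1} (applicable because $ p/t > 1 $) to replace the maximal function by $ \sum_k |\lambda_{j,k}| \chi_{j,k}^{(u)} $ at the cost of a constant. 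Collecting the powers of $ 2 $ this yields
\begin{align*}
2^{\ell s} \Vert \cfi[\varphi_\ell \cf f] \vert \mathcal{M}^{u}_{p}(\R)\Vert \ls \sum_{j=0}^{\infty} 2^{-|\ell-j|\delta + (\ell-j)s}\, 2^{j(s - \frac{d}{u})} \Big\Vert \sum_{k} |\lambda_{j,k}| \chi_{j,k}^{(u)} \vert \mathcal{M}^{u}_{p}(\R)\Big\Vert,
\end{align*}
where $ \delta = K $ on the part $ \ell \ge j $ and $ \delta = L + 1 + d - d/t $ on the part $ \ell \le j $.

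Finally, taking the $ \ell^q $ quasi-norm in $ \ell $ and applying Young's inequality for sequences (after raising to the power $ \min(1,q) $ when $ q < 1 $) removes the geometric kernel, provided its exponent is summable: on $ \ell \ge j $ this needs $ K > s $, which follows from $ K \ge \max(0, s+1) $, and on $ \ell \le j $ it needs $ L + 1 + d - d/t + s > 0 $. Letting $ t \uparrow p $ this last condition becomes $ L + 1 - \sigma_p + s > 0 $, i.e. exactly $ L \ge \max(-1, \sigma_p - s) $; what remains is precisely $ \Vert \lambda \vert {\bf n}^{s}_{u,p,q}(\R)\Vert $, which proves the claim. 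I expect the main obstacle to be the bookkeeping in this last step: producing the sharp decay rate on the moment side and checking that the hypothesis on $ L $ through $ \sigma_p $ is exactly what keeps the geometric series convergent after the power trick with $ t < p \le 1 $. Getting this threshold right, rather than the final convolution in $ \ell $, is the delicate point.
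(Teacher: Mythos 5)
The paper offers no proof of this lemma---it is quoted from Rosenthal \cite{Ros} and Sawano--Tanaka \cite{SawTan}---and your argument is essentially the standard proof found there (in the Frazier--Jawerth tradition): the two-sided estimate for $K_\ell * a_{j,k}$ via moment cancellation above and below the atom scale, the summation over $k$ by the maximal-function trick with the power $t<\min(1,p)$ and Lemma \ref{l_ineq1} applied to $\mathcal{M}^{u/t}_{p/t}(\R)$, and finally a convolution inequality in $\ell^q$, with the thresholds $K>s$ and $L+1+d-d/t+s>0$ coming out exactly as you describe. The outline is correct; the only small inaccuracies are that the spatial decay rate $M$ of the bumps comes from the Schwartz decay of $\cfi\varphi_\ell$ (so it is free and arbitrarily large) rather than from $K$ being large, and that the convergence of $\sum_{j}\sum_{k}\lambda_{j,k}a_{j,k}$ in $\mathcal{S}'(\R)$ should be justified before interchanging the sum with the convolution against $K_\ell$.
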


\section{A characterization of Besov-Morrey spaces in terms of differences}

In this chapter we want to prove characterizations of the Besov-Morrey spaces $ \mathcal{N}^{s}_{u,p,q}(\R)  $ that only use differences. For that purpose we want to use some ideas from Hedberg and Netrusov, see \cite{HN}. Because of this at first we have to introduce some additional notation that also can be found in chapter one of \cite{HN}.  

\begin{defi}\label{def_HN1}

Let $ E $ be a quasi - Banach space of sequences of Lebesgue-measurable functions on $ \mathbb{R}^d$. On $ E $ there is a non-negative function $ \Vert \cdot \Vert_{E} $, which has the same properties as a norm, except for the triangle inequality, and in addition satisfies the following conditions.

\begin{itemize}
\item[(i)] 
The metric space  $ ( E, \Vert \cdot \Vert_{E} ) $ is complete.

\item[(ii)]
If $ \{f_{i}\}_{i=0}^{\infty} \in E $ and $ \{g_{i} \}_{i=0}^{\infty} $ is a sequence of measurable functions such that $ | g_{i}  | \leq | f_{i} | $ a. e.  for all  $ i \in \mathbb{N}_{0}$, it follows that $ \{g_{i}\}_{i=0}^{\infty} \in E $ and  $ \left\| \left\{g_{i}\right\}_{i=0}^{\infty} \right\| _{E} \leq \left\| \left\{f_{i}\right\}_{i=0}^{\infty} \right\| _{E}$.

\item[(iii)]
There exist constants  $\kappa$ with $ 0 < \kappa \leq 1$ and $ C_{E} \geq 1 $, such that for any family $ \lbrace F_{i}\rbrace_{i=0}^{j}$  of elements in $ E $ one has the inequality
\begin{align*}
\Big \Vert \sum_{i=0}^{j}F_{i}   \Big \Vert_{E}^{\kappa} \leq C_{E} \sum_{i=0}^{j} \Big \Vert F_{i}   \Big \Vert_{E}^{\kappa}.
\end{align*}
 
\end{itemize} 

\end{defi}

Based on this definition Hedberg and Netrusov defined classes $ S( \epsilon_{+} , \epsilon_{-} , r ) $ of spaces $ E $ with $ \epsilon_{+} , \epsilon_{-} \in \mathbb{R}$ and $ 0 < r < \infty $. To do so for a sequence of functions $ \{f_{i}\}_{i=0}^{\infty} $ we define the left shift $ S_{+}$ and the right shift $ S_{-} $ by $ S_{+}\left(\left\{f_{i}\right\}_{i=0}^{\infty}\right) = \left\{f_{i+1}\right\}_{i=0}^{\infty} $ and $ S_{-}\left(\left\{f_{i}\right\}_{i=0}^{\infty}\right) = \left\{f_{i-1}\right\}_{i=0}^{\infty} $ with $ f_{-1}=0 $. Moreover for $ 0 < r < \infty $ and $ t \geq 0 $ we define the maximal function $ M_{r,t}f $ and the operator $ \hat{M}_{r,t}$ by
\begin{align*}
\hat{M}_{r,t}\left(\left\{f_{i}\right\}_{i=0}^{\infty}\right) = \left\{M_{r,t}f_{i}\right\}_{i=0}^{\infty}  = \Big \{ \sup_{a>0} \Big ( a^{-d} \int_{B(0,a)}  \frac{ | f_{i}(x+y)|^{r}}{(1+|y|)^{rt}}dy\Big)^{\frac{1}{r}}    \Big \}_{i=0}^{\infty}.
\end{align*}

\begin{defi}\label{def_HN2}

Let $ \epsilon_{+} , \epsilon_{-} \in \mathbb{R}$, $ 0 < r < \infty $ and $ t \geq 0 $. We define a class $ S( \epsilon_{+}, \epsilon_{-}, r, t)$ of spaces $ E $ by saying that $ E \in S( \epsilon_{+}, \epsilon_{-}, r, t ) $ if the following conditions are satisfied.

\begin{itemize}
\item[(i)] 
The linear operators $ S_{+} $ and $ S_{-}$  are continuous on $ E $ and there are constants $ C_{1}, C_{2} > 0 $ independent of $j$ and  $ \left\{f_{i}\right\}_{i=0}^{\infty} $ such that for all $ j \in \mathbb{N}$ we have

\begin{center}
$ \| ( S_{+} )^{j}  \vert \cl (E) \| \leq C_{1} 2^{-j \epsilon_{+}} \qquad $ and $ \qquad   \| ( S_{-} )^{j}  \vert \cl (E) \| \leq C_{2} 2^{j \epsilon_{-}}$.
\end{center}

\item[(ii)]

The operator $ \hat{M}_{r,t}$ is bounded on $ E $. There is a constant $ C > 0 $ independent of $ \left\{f_{i}\right\}_{i=0}^{\infty} $ such that $ \left\| \left\{M_{r,t}f_{i}\right\}_{i=0}^{\infty} \right\| _{E} \leq C \left\| \left\{f_{i}\right\}_{i=0}^{\infty} \right\| _{E}$ .

\end{itemize}

Set $  S( \epsilon_{+}, \epsilon_{-}, r ) = \bigcup_{t \geq 0}  S( \epsilon_{+}, \epsilon_{-},r ,t )$.

\end{defi}

Using this definitions we are able to define function spaces denoted by $ Y(E) $. 

\begin{defi}\label{def_HN3}

Let $ \epsilon_{+}, \epsilon_{-} \in \mathbb{R} $ and $ r > 0 $. Let $ E \in S( \epsilon_{+}, \epsilon_{-}, r  ) $. The space $ Y(E) $ consists of all distributions $f \in \mathcal{S}'(\R) $ which have a representation $ f = \sum_{i=0}^{\infty} f_{i} $ converging in $\mathcal{S}'(\R)$ such that we have $ \left\| \left\{f_{i}\right\}_{i=0}^{\infty} \right\| _{E} < \infty $, $ \supp \cf f_{0} \subset B(0,2) $ and $ \supp \cf f_{i} \subset B(0,2^{i+1})\setminus~B(0,2^{i-1}) $ for all $ i \in \mathbb{N}$.
\end{defi}

We put 
\begin{align*}
\Vert f \Vert_{Y(E)} := \inf \left\| \left\{f_{i}\right\}_{i=0}^{\infty} \right\| _{E}\, , 
\end{align*}
where the infimum is taken over all admissible representations of $ f $ as described in definition \ref{def_HN3}. Then $ \Vert f \Vert_{Y(E)} $ is a quasi-norm and $ Y(E) $ becomes a quasi-normed space. For the space $ Y(E) $ Hedberg and Netrusov proved a characterization in terms of differences, see proposition 1.1.12. and theorem 1.1.14. in \cite{HN}. This will be very important for the proof of the following result. 

\begin{prop}\label{pro_HNres1}

Let $ 0 < p \leq u < \infty $ and $ 0 < q \leq \infty $. Let $ 0 < v \leq \infty $ and $ s ~ > ~ d \max  ( 0, \frac{1}{p} - 1, \frac{1}{p} - \frac{1}{v}   ) $. Let $ N \in \mathbb{N} $ with $ N > s $. Then a function $ f~ \in ~ L_{p}^{loc}(\R)$ belongs to $ \mathcal{N}^{s}_{u,p,q}(\R)   $ if and only if $ f \in L_{v}^{loc}(\R)$ and ( modifications if $ q = \infty $ and/or $ v = \infty $) 
\begin{align*}
& \Vert f \vert \mathcal{N}^{s}_{u,p,q}(\R) \Vert^{(\clubsuit)} :=  \Big ( \Big \Vert  \Big ( \int_{B(x,1)}\vert f(y) \vert^{v} dy \Big )^{\frac{1}{v}}   \Big \vert  \mathcal{M}^{u}_{p}( \mathbb{R}^d)   \Big \Vert^{q} \\ 
& \qquad \qquad \qquad \qquad \qquad \quad + \sum_{j = 1}^{\infty} 2^{jq(s + \frac{d}{v})} \Big \Vert  \Big ( \int_{B(0, 2^{-j})}\vert \Delta^{N}_{h}f(x) \vert^{v} dh \Big )^{\frac{1}{v}} \Big \vert  \mathcal{M}^{u}_{p}( \mathbb{R}^d) \Big \Vert^{q} \Big )^{\frac{1}{q}}    
\end{align*}
is finite. The quasi-norms $ \Vert f \vert \mathcal{N}^{s}_{u,p,q}(\R) \Vert  $ and $ \Vert f \vert \mathcal{N}^{s}_{u,p,q}(\R) \Vert^{(\clubsuit)}    $  are equivalent for $ f \in L_{p}^{loc}(\R)$.

\end{prop}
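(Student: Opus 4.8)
The plan is to realise $\mathcal{N}^{s}_{u,p,q}(\R)$ as one of the Hedberg--Netrusov spaces $Y(E)$ and then to invoke their abstract characterization of $Y(E)$ in terms of differences. Concretely, I would take $E$ to be the weighted sequence Morrey space $E := \ell^{q}_{s}(\mathcal{M}^{u}_{p})$ consisting of all sequences $\{f_{i}\}_{i=0}^{\infty}$ of measurable functions with
\[
\Vert \{f_{i}\}_{i=0}^{\infty} \Vert_{E} := \Big( \sum_{i=0}^{\infty} 2^{isq} \Vert f_{i} \vert \mathcal{M}^{u}_{p}(\R) \Vert^{q} \Big)^{1/q} < \infty
\]
(with the usual modification for $q=\infty$). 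With this choice the very definition of $\mathcal{N}^{s}_{u,p,q}(\R)$ shows that the dyadic pieces $f_{i} = \cfi[\varphi_{i} \cf f]$ form an admissible representation in the sense of Definition \ref{def_HN3}, so that $\Vert f \Vert_{Y(E)} \le \Vert f \vert \mathcal{N}^{s}_{u,p,q}(\R) \Vert$. For the reverse inequality I would start from an arbitrary admissible representation $f = \sum_{i} f_{i}$ and estimate each $\cfi[\varphi_{k}\cf f]$ against the $f_{i}$; since each $f_{i}$ is band-limited to an annulus of size $2^{i}$, Lemma \ref{l_ineq2} (applied after the standard rescaling to that annulus) controls $\Vert \cfi[\varphi_{k}\cf f_{i}] \vert \mathcal{M}^{u}_{p}(\R) \Vert$ by $\Vert f_{i} \vert \mathcal{M}^{u}_{p}(\R) \Vert$, and summing over the finitely many overlapping annuli yields $\Vert f \vert \mathcal{N}^{s}_{u,p,q}(\R) \Vert \lesssim \Vert f \Vert_{Y(E)}$. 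Thus $Y(E) = \mathcal{N}^{s}_{u,p,q}(\R)$ with equivalent quasi-norms.

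Second, I would check that $E$ meets the structural requirements. Definition \ref{def_HN1} is routine: completeness and the lattice property (ii) follow from the corresponding properties of $\mathcal{M}^{u}_{p}(\R)$, while the $\kappa$-inequality (iii) holds with $\kappa = \min(1,p,q)$ by Lemma \ref{l_bp1}(iii) applied componentwise together with the quasi-triangle inequality of the $\ell^{q}$-sum. To place $E$ in a class $S(\epsilon_{+},\epsilon_{-},r,t)$, note first that a direct computation on the weights gives $\Vert (S_{+})^{j} \vert \cl(E) \Vert \le 2^{-js}$ and $\Vert (S_{-})^{j} \vert \cl(E) \Vert \le 2^{js}$, so condition (i) holds with $\epsilon_{+} = \epsilon_{-} = s$. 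The decisive point is the boundedness of $\hat{M}_{r,t}$ on $E$. Here I would fix $r$ with $0 < r < p$, $r \le \min(1,v)$ and a large $t$ --- the lower bound $s > d\max(0, \tfrac1p - 1, \tfrac1p - \tfrac1v)$ is precisely what permits such a choice subject to the parameter constraints below --- and observe that $M_{r,t}f_{i}$ is dominated by a rescaled Hardy--Littlewood maximal function of $|f_{i}|^{r}$. The boundedness of $\hat{M}_{r,t}$ on $E$ then reduces, scale by scale, to the maximal inequality on $\mathcal{M}^{u/r}_{p/r}(\R)$, which is available from Lemma \ref{l_ineq1} since $p/r > 1$. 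Hence $E \in S(s,s,r)$, and because $N > s = \epsilon_{+}$ all hypotheses of the Hedberg--Netrusov difference theorem are in force.

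Third, with $Y(E) = \mathcal{N}^{s}_{u,p,q}(\R)$ established and $E \in S(s,s,r)$ verified, I would apply the characterization of $Y(E)$ in terms of differences from proposition 1.1.12 and theorem 1.1.14 of \cite{HN}. Their result expresses $\Vert f \Vert_{Y(E)}$, up to equivalence, through the $E$-quasi-norm of the sequence of local $L_{v}$-means of the $N$-th differences $\Delta^{N}_{h}f$ at the dyadic scales $2^{-j}$, together with a zero-order local $L_{v}$-mean of $f$. Writing out this $E$-quasi-norm and absorbing the volume factors $|B(0,2^{-j})| \sim 2^{-jd}$ into the weights $2^{jq(s + d/v)}$ turns it precisely into $\Vert f \vert \mathcal{N}^{s}_{u,p,q}(\R) \Vert^{(\clubsuit)}$, the first summand of $(\clubsuit)$ being exactly the zero-order term. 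This yields the asserted equivalence, and the membership statement $f \in L_{v}^{loc}(\R)$ when the $(\clubsuit)$-norm is finite comes along as part of the Hedberg--Netrusov reconstruction.

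I expect the \emph{main obstacle} to be twofold. First, the clean identification $Y(E) = \mathcal{N}^{s}_{u,p,q}(\R)$, whose nontrivial direction requires the band-limited multiplier estimate of Lemma \ref{l_ineq2} to be applied uniformly across all frequency scales. Second, and more delicately, the precise matching of the parameters $r$ and $t$ in the maximal-operator condition, so that simultaneously Lemma \ref{l_ineq1} is applicable (forcing $r < p$) and the Hedberg--Netrusov theorem delivers genuine $L_{v}$-means of differences (forcing $r \le v$ and a lower smoothness constraint of the type $s > d(\tfrac1r - \tfrac1v)$). It is exactly in reconciling these constraints that the hypothesis $s > d\max(0, \tfrac1p - 1, \tfrac1p - \tfrac1v)$ is consumed.
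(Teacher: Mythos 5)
Your proposal follows essentially the same route as the paper: realise $\mathcal{N}^{s}_{u,p,q}(\R)$ as the Hedberg--Netrusov space $Y(E)$ with $E$ the weighted $\ell^{q}$-sequence space over $\mathcal{M}^{u}_{p}(\R)$, verify $E\in S(s,s,r,t)$ for some $0<r<p$ via the Morrey maximal inequality of Lemma \ref{l_ineq1} and the band-limited convolution estimate of Lemma \ref{l_ineq2}, and then invoke proposition 1.1.12 and theorem 1.1.14 of \cite{HN}. This matches the paper's argument (which itself defers the details to chapter 3 of \cite{Ho}), including your observation that the hypothesis on $s$ is consumed in choosing $r$ with $\max\bigl(\frac{d}{s+d},\frac{d}{s+d/v}\bigr)<r<p$.
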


\begin{rem}\label{rem_HNrs1}
In the formulation of proposition \ref{pro_HNres1} it is possible to replace the quasi-norm $ \Vert f \vert \mathcal{N}^{s}_{u,p,q}(\R) \Vert^{(\clubsuit)}   $ by
\begin{align*}
& \Vert f \vert \mathcal{N}^{s}_{u,p,q}(\R) \Vert^{(\spadesuit)} \\ 
& \qquad =  \Big (  \Vert  f    \vert  \mathcal{M}^{u}_{p}( \mathbb{R}^d)  \Vert^{q}  + \sum_{j = 1}^{\infty} 2^{jq(s + \frac{d}{v})} \Big \Vert  \Big ( \int_{B(0, 2^{-j})}\vert \Delta^{N}_{h}f(x) \vert^{v} dh \Big )^{\frac{1}{v}} \Big \vert  \mathcal{M}^{u}_{p}( \mathbb{R}^d) \Big \Vert^{q} \Big )^{\frac{1}{q}}. 
\end{align*}
To prove this we can use the ideas from remark 3.6. in \cite{Ho}.
\end{rem}

\begin{proof}
To prove this result we proceed like it is described in chapter 3 of \cite{Ho}. Here a characterization in terms of differences for the Triebel-Lizorkin-Morrey spaces $ \mathcal{E}^{s}_{u,p,q}(\R) $ is proved. Fortunately the spaces $  \mathcal{N}^{s}_{u,p,q}(\R) $ and $ \mathcal{E}^{s}_{u,p,q}(\R) $ have many properties in common. So we can use the ideas from chapter 3 of \cite{Ho} with some minor modifications to prove proposition \ref{pro_HNres1}. At first for $ s \in \mathbb{R} $, $  0 < p \leq u < \infty $ and $ 0 < q \leq \infty $ we define the following space of locally Lebesgue-integrable functions on $ \R $ ( modifications if $ q = \infty $ ). 
\begin{align*}
l_{q}^{s}(\mathcal{M}^{u}_{p}(\R)) = \Big \lbrace \lbrace f_{j} \rbrace_{j = 0 }^{\infty}   : \! \Vert \lbrace f_{j} \rbrace_{j = 0 }^{\infty} \Vert_{l_{q}^{s}(\mathcal{M}^{u}_{p}(\R))}  :=  \Big ( \sum_{j = 0}^{\infty} 2^{jsq} \Vert f_{j} \vert \mathcal{M}^{u}_{p}(\R)  \Vert^{q}  \Big )^{\frac{1}{q}}  < \infty  \Big \rbrace  
\end{align*}
It is not difficult to see that the pair $ ( l_{q}^{s}(\mathcal{M}^{u}_{p}(\R)) ,  \Vert \cdot \Vert_{l_{q}^{s}(\mathcal{M}^{u}_{p}(\R))} ) $ has all the properties that are written down in definition \ref{def_HN1}. Moreover for $ s \in \mathbb{R} $, $ 0 < r < p \leq u < \infty $, $ 0 < q \leq \infty $ and $ t \geq 0 $ we have $  l_{q}^{s}(\mathcal{M}^{u}_{p}(\R)) \in S(s,s,r,t) $, see definition \ref{def_HN2}. The main tool to prove this is lemma \ref{l_ineq1}. Next we investigate the space $ Y( l_{q}^{s}(\mathcal{M}^{u}_{p}(\R)) )  $, see definition \ref{def_HN3}. For $ s \in \mathbb{R} $, $ 0 < r < p \leq u < \infty  $ and $ 0 < q \leq \infty $ we obtain $ Y( l_{q}^{s}(\mathcal{M}^{u}_{p}(\R)) ) =  \mathcal{N}^{s}_{u,p,q}(\R) $. Moreover the quasi-norms $  \Vert \cdot \Vert_{Y(l_{q}^{s}(\mathcal{M}^{u}_{p}(\R)))} $ and $ \Vert \cdot \vert \mathcal{N}^{s}_{u,p,q}(\mathbb{R}^{d})  \Vert $ are equivalent. To prove this we follow the ideas of the proof from proposition 3.4 in \cite{Ho}. Here we have to use lemma \ref{l_ineq2}. Now we are able to get a characterization in terms of differences for the spaces $ \mathcal{N}^{s}_{u,p,q}(\mathbb{R}^{d}) $ like it is written down in the formulation of proposition \ref{pro_HNres1}. For that purpose we have to use proposition 1.1.12. and theorem 1.1.14. from \cite{HN}. In a first step we get the above characterization not for functions $ f~ \in ~ L_{p}^{loc}(\R) $ but for $ f \in L_{r}^{loc}(\R)$ with $ \max   ( \frac{d}{s + d}, \frac{d}{s + \frac{d}{v}}   ) < r < p $. We have $   L_{p}^{loc}(\R) \subset  L_{r}^{loc}(\R) $. So $ f \in  L_{p}^{loc}(\R) $ implies $ f \in L_{r}^{loc}(\R)  $ and the result can be obtained. So the proof is complete. Notice that a result similar to proposition \ref{pro_HNres1} also can be found in \cite{ysy}, see corollary 4.12.
\end{proof}

Now we are well prepared to prove the following main result.

\begin{satz}\label{thm_MR_va}
Let $ 0 < p \leq u < \infty $ and $ 0 < q \leq \infty $. Let $ 0 < v \leq \infty $ and $ 1 \leq a \leq \infty  $. Let 
\begin{align*}
s ~ > ~ d \max \Big  ( 0, \frac{1}{p} - 1, \frac{1}{p} - \frac{1}{v}  \Big ) . 
\end{align*}
Let $ N \in \mathbb{N} $ with $ N > s $. Then a function $ f~ \in ~ L_{p}^{loc}(\R)$ belongs to $ \mathcal{N}^{s}_{u,p,q}(\R)   $ if and only if $ f \in L_{v}^{loc}(\R)$ and (modifications if $ q = \infty $ and/or $ v = \infty $) 
\begin{align*}
& \underbrace{\Vert f \vert \mathcal{M}^{u}_{p}( \mathbb{R}^d)   \Vert + \Big ( \int_{0}^{a} t^{-sq-d \frac{q}{v}} \Big \Vert  \Big ( \int_{B(0,t)}\vert \Delta^{N}_{h}f(x) \vert^{v} dh \Big )^{\frac{1}{v}} \Big \vert  \mathcal{M}^{u}_{p}( \mathbb{R}^d) \Big \Vert^{q}  \frac{dt}{t} \Big )^{\frac{1}{q}}  } < \infty.  \\
& \hspace{4,5 cm} \Vert f \vert \mathcal{N}^{s}_{u,p,q}(\R) \Vert^{(v,a)} :=  
\end{align*}
The quasi-norms $ \Vert f \vert \mathcal{N}^{s}_{u,p,q}(\R) \Vert  $ and $ \Vert f \vert \mathcal{N}^{s}_{u,p,q}(\R) \Vert^{(v,a)} $  are equivalent for $ f \in L_{p}^{loc}(\R)$.

\end{satz}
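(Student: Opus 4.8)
The plan is to deduce Theorem \ref{thm_MR_va} from Proposition \ref{pro_HNres1} and Remark \ref{rem_HNrs1} by proving that, for every $ a \in [1,\infty] $, the continuous quasi-norm $ \Vert f \vert \mathcal{N}^{s}_{u,p,q}(\R) \Vert^{(v,a)} $ is equivalent to the discrete quasi-norm $ \Vert f \vert \mathcal{N}^{s}_{u,p,q}(\R) \Vert^{(\spadesuit)} $ of Remark \ref{rem_HNrs1}. Write $ g(t) := \big\Vert ( \int_{B(0,t)} |\Delta^{N}_{h}f(x)|^{v} \, dh )^{1/v} \, \big\vert \, \mathcal{M}^{u}_{p}(\R) \big\Vert $. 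The first observation I would record is that $ t \mapsto g(t) $ is non-decreasing: enlarging $ t $ enlarges $ B(0,t) $, so the inner integral increases pointwise in $ x $, and the Morrey quasi-norm is monotone with respect to the pointwise a.e.\ order on non-negative functions (Definition \ref{def_mor}). The cases $ q = \infty $ and $ v = \infty $ are handled with the usual modifications (suprema in place of the respective integrals) and do not change the structure of the argument.

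Next I would split $ \int_{0}^{a} = \int_{0}^{1} + \int_{1}^{a} $ and discretize the first piece dyadically as $ \int_{0}^{1} = \sum_{j \geq 1} \int_{2^{-j}}^{2^{-j+1}} $. On each dyadic interval one has $ t^{-sq-dq/v} \approx 2^{jq(s+d/v)} $, and monotonicity gives $ g(2^{-j}) \leq g(t) \leq g(2^{-j+1}) $, while $ \int_{2^{-j}}^{2^{-j+1}} \frac{dt}{t} = \log 2 $. This sandwiches $ \int_{0}^{1} t^{-sq-dq/v} g(t)^{q} \frac{dt}{t} $ between constant multiples of $ \sum_{j \geq 1} 2^{jq(s+d/v)} g(2^{-j})^{q} $ and $ \sum_{j \geq 1} 2^{jq(s+d/v)} g(2^{-j+1})^{q} $. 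The lower bound already yields the easy direction: since $ a \geq 1 $ and the integrand is non-negative, $ \Vert f \vert \mathcal{N}^{s}_{u,p,q}(\R) \Vert^{(v,a)} \gtrsim \Vert f \vert \mathcal{N}^{s}_{u,p,q}(\R) \Vert^{(\spadesuit)} $, and Proposition \ref{pro_HNres1} together with Remark \ref{rem_HNrs1} identifies the latter with $ \Vert f \vert \mathcal{N}^{s}_{u,p,q}(\R) \Vert $. Hence the inclusion and the bound $ \Vert f \vert \mathcal{N}^{s}_{u,p,q}(\R) \Vert \lesssim \Vert f \vert \mathcal{N}^{s}_{u,p,q}(\R) \Vert^{(v,a)} $ need no further work.

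The reverse estimate $ \Vert f \vert \mathcal{N}^{s}_{u,p,q}(\R) \Vert^{(v,a)} \lesssim \Vert f \vert \mathcal{N}^{s}_{u,p,q}(\R) \Vert $ is where the main obstacle lies. After re-indexing, the upper dyadic bound reduces it to the small-scale sum (which is $ \lesssim \Vert f \vert \mathcal{N}^{s}_{u,p,q}(\R) \Vert^{(\spadesuit)} $) plus two genuinely new contributions: the single boundary term $ g(1) $ and the large-scale integral $ \int_{1}^{a} t^{-sq-dq/v} g(t)^{q} \frac{dt}{t} $. Both are controlled by a growth bound on $ g(t) $ for $ t \geq 1 $, and establishing this bound is the hard part. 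I would expand $ \Delta^{N}_{h}f(x) = \sum_{k=0}^{N} (-1)^{N-k} \binom{N}{k} f(x+kh) $, estimate $ g(t) $ by the sum of the corresponding Morrey quasi-norms, and after the substitution $ y = kh $ reduce everything to the fixed-radius local averages $ x \mapsto ( \int_{B(x,R)} |f(y)|^{v} \, dy )^{1/v} $ with $ R = kt \gtrsim 1 $. Covering $ B(0,R) $ by $ \lesssim R^{d} $ unit cubes and using translation invariance of the Morrey quasi-norm together with its $ \min(1, p/v) $-subadditivity (via the identity $ \Vert \Phi \vert \mathcal{M}^{u}_{p}(\R) \Vert = \Vert \Phi^{v} \vert \mathcal{M}^{u/v}_{p/v}(\R) \Vert^{1/v} $), I expect the estimate $ g(t) \lesssim t^{d/\min(v,p)} \Vert f \vert \mathcal{N}^{s}_{u,p,q}(\R) \Vert $, where each unit-scale average is bounded through the first term of $ \Vert f \vert \mathcal{N}^{s}_{u,p,q}(\R) \Vert^{(\clubsuit)} \approx \Vert f \vert \mathcal{N}^{s}_{u,p,q}(\R) \Vert $ from Proposition \ref{pro_HNres1}.

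With this growth bound the remaining integral converges exactly under the standing hypothesis on $ s $: one computes
\[
\int_{1}^{\infty} t^{-sq - dq/v} \, t^{q\, d/\min(v,p)} \frac{dt}{t} = \int_{1}^{\infty} t^{\, q( d/\min(v,p) - d/v - s ) - 1} \, dt ,
\]
which is finite precisely when $ s > d\big( \tfrac{1}{\min(v,p)} - \tfrac{1}{v} \big) = d \max\big( 0, \tfrac{1}{p} - \tfrac{1}{v} \big) $, one of the conditions assumed in the theorem (the term $ \tfrac{1}{p}-1 $ is consumed inside Proposition \ref{pro_HNres1}). The same $ t=1 $ estimate gives $ g(1) \lesssim \Vert f \vert \mathcal{N}^{s}_{u,p,q}(\R) \Vert $. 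Collecting the three contributions proves $ \Vert f \vert \mathcal{N}^{s}_{u,p,q}(\R) \Vert^{(v,a)} \lesssim \Vert f \vert \mathcal{N}^{s}_{u,p,q}(\R) \Vert $ uniformly in $ a \in [1,\infty] $, which combined with the second paragraph gives the asserted equivalence and, in particular, the independence (up to equivalence) of the resulting quasi-norm of the cut-off $ a $. The distinction between $ v < p $, $ v = p $ and $ v > p $ only decides which subadditivity exponent and which part of the condition on $ s $ are invoked, and the $ q = \infty $, $ v = \infty $ modifications carry over verbatim.
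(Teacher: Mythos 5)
Your proposal is correct and follows essentially the same route as the paper's proof: monotonicity of $t\mapsto\big\Vert(\int_{B(0,t)}|\Delta_h^Nf|^v\,dh)^{1/v}\,\vert\,\mathcal{M}^u_p(\R)\big\Vert$ plus dyadic discretization to reduce to the quasi-norm $\Vert\cdot\Vert^{(\spadesuit)}$ of Remark \ref{rem_HNrs1}, and then, for the boundary term and the large-scale integral, the expansion of $\Delta_h^Nf$, a covering of $B(x,Nt)$ by $\lesssim t^d$ unit balls, translation invariance and $\min(p,v)$-type subadditivity of the Morrey quasi-norm, with convergence of the resulting series guaranteed exactly by $s>d\max(0,\tfrac1p-\tfrac1v)$. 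The only difference is organizational (you treat all $a$ at once via $\int_0^a=\int_0^1+\int_1^a$, while the paper handles $a=1$, $a=\infty$ and $1<a<\infty$ in separate steps), which does not change the substance.
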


\begin{rem}\label{rem_va}
Let $ 0 < v \leq \infty $ and $ 1 \leq a \leq \infty $. Then the letters $ v $ and $ a $ in the abbreviation $(v,a)$ indicate the dependence of the concrete quasi-norm $ \Vert \cdot \vert \mathcal{N}^{s}_{u,p,q}(\R) \Vert^{(v,a)} $ on these parameters. 
\end{rem}

\begin{proof}
To prove theorem \ref{thm_MR_va} we can use the ideas that can be found in the proofs of proposition 4.1 and theorem 6 from \cite{Ho}. Almost all the techniques that are described there also can be used here. Only some minor modifications have to be made. 

{\em Step 1.} At first we will deal with the case $ a = 1 $.

{\em Substep 1.1.} We prove that there is a constant $ C > 0 $ independent of $ f \in L_{p}^{loc}(\R) $ such that $ \Vert f \vert \mathcal{N}^{s}_{u,p,q}(\R) \Vert^{(\spadesuit)} \leq C \Vert f \vert \mathcal{N}^{s}_{u,p,q}(\R) \Vert^{(v,1)}  $. But this is just a consequence of the monotonicity of $  \int_{B(0,t)}\vert \Delta^{N}_{h}f(x) \vert^{v} dh   $ in $ t $. 

{\em Substep 1.2.} Next we will prove that for $ f \in \mathcal{N}^{s}_{u,p,q}(\R) $ there is a constant $ C > 0 $ independent of $ f $ such that $ \Vert f \vert \mathcal{N}^{s}_{u,p,q}(\R) \Vert^{(v,1)} \leq C  \Vert f \vert \mathcal{N}^{s}_{u,p,q}(\R) \Vert^{(\spadesuit)}  $. To prove this again at first we use the monotonicity of $  \int_{B(0,t)}\vert \Delta^{N}_{h}f(x) \vert^{v} dh   $ in $ t $. Moreover we use that we can write
\begin{equation}\label{diff_form}
\Delta^{N}_{h}f(x) = \sum_{k = 0}^{N} (-1)^{N-k} { N \choose k } f ( x + k h ).
\end{equation}
Then we obtain
\begin{align*}
& \Big ( \int_{0}^{1} t^{-sq-d \frac{q}{v}} \Big \Vert  \Big ( \int_{B(0,t)}\vert \Delta^{N}_{h}f(x) \vert^{v} dh \Big )^{\frac{1}{v}} \Big \vert  \mathcal{M}^{u}_{p}( \mathbb{R}^d) \Big \Vert^{q}  \frac{dt}{t} \Big )^{\frac{1}{q}} \\ 
& \qquad \leq C_{1} \Vert f \vert \mathcal{N}^{s}_{u,p,q}(\R) \Vert^{(\spadesuit)}  + C_{1} \Big \Vert  \Big ( \int_{B(0,1)}\vert \Delta^{N}_{h}f(x) \vert^{v} dh \Big )^{\frac{1}{v}} \Big \vert  \mathcal{M}^{u}_{p}( \mathbb{R}^d) \Big \Vert \\
& \qquad \leq C_{2} \Vert f \vert \mathcal{N}^{s}_{u,p,q}(\R) \Vert^{(\spadesuit)}  + C_{2} \Big \Vert  \Big ( \int_{B(x,N)}\vert f(z) \vert^{v} dz \Big )^{\frac{1}{v}} \Big \vert  \mathcal{M}^{u}_{p}( \mathbb{R}^d) \Big \Vert.
\end{align*}
Next we cover the ball $ B(x,N) $ with $ ( 2N + 1 )^{d} $ small balls with radius one like it is described in the proof of proposition 4.1 in \cite{Ho} and use the translation-invariance of the Morrey spaces. Then proposition \ref{pro_HNres1} leads to 
\begin{align*}
\Big \Vert  \Big ( \int_{B(x,N)}\vert f(z) \vert^{v} dz \Big )^{\frac{1}{v}} \Big \vert  \mathcal{M}^{u}_{p}( \mathbb{R}^d) \Big \Vert & \leq C_{3} \Big \Vert  \Big ( \int_{B(x,1)}\vert f(z) \vert^{v} dz \Big )^{\frac{1}{v}} \Big \vert  \mathcal{M}^{u}_{p}( \mathbb{R}^d) \Big \Vert  \\
& \leq C_{4} \Vert f \vert \mathcal{N}^{s}_{u,p,q}(\R) \Vert^{(\spadesuit)}.
\end{align*}

{\em Step 2.} Now we will deal with the case $ a = \infty $.

{\em Substep 2.1.} Here at first we prove that there is a constant $ C > 0 $ independent of $ f \in L_{p}^{loc}(\R) $ such that $ \Vert f \vert \mathcal{N}^{s}_{u,p,q}(\R) \Vert^{(v,1)} \leq C \Vert f \vert \mathcal{N}^{s}_{u,p,q}(\R) \Vert^{(v, \infty )}  $. But of course this is obvious. 

{\em Substep 2.2.} Next we will prove that for $ f \in \mathcal{N}^{s}_{u,p,q}(\R) $ there is a constant $ C > 0 $ independent of $ f $ such that $ \Vert f \vert \mathcal{N}^{s}_{u,p,q}(\R) \Vert^{(v, \infty )} \leq C  \Vert f \vert \mathcal{N}^{s}_{u,p,q}(\R) \Vert^{(v,1)}  $. To prove this at first because of the monotonicity of $  \int_{B(0,t)}\vert \Delta^{N}_{h}f(x) \vert^{v} dh   $ in $ t $ and formula \eqref{diff_form} we obtain
\begin{align*}
& \Big ( \int_{1}^{ \infty } t^{-sq-d \frac{q}{v}} \Big \Vert  \Big ( \int_{B(0,t)}\vert \Delta^{N}_{h}f(x) \vert^{v} dh \Big )^{\frac{1}{v}} \Big \vert  \mathcal{M}^{u}_{p}( \mathbb{R}^d) \Big \Vert^{q}  \frac{dt}{t} \Big )^{\frac{1}{q}} \\
& \qquad \leq C_{1} \Vert f \vert \mathcal{M}^{u}_{p}( \mathbb{R}^d)   \Vert + C_{1} \Big ( \sum_{j = 1}^{\infty} 2^{-jsq} 2^{-jd \frac{q}{v}} \Big \Vert  \Big ( \int_{B(x,N 2^{j})}\vert f(z) \vert^{v} dz \Big )^{\frac{1}{v}} \Big \vert  \mathcal{M}^{u}_{p}( \mathbb{R}^d) \Big \Vert^{q}   \Big )^{\frac{1}{q}}.
\end{align*} 
Now we want to cover the ball $ B(x , N  2^{j}) $ with $ ( 2 N \cdot 2^{j} + 1 )^{d} $ small balls with radius one. Let $ i \in \lbrace 1, 2, \ldots, ( 2 N \cdot 2^{j} + 1 )^{d} \rbrace $ and $ w_{i} $ appropriate displacement vectors such that
\begin{align*}
\bigcup_{i = 1}^{( 2 N \cdot 2^{j} + 1 )^{d}} B( x + w_{i} , 1) \supset  B(x , N  2^{j}).
\end{align*} 
We get
\begin{align*}
& \Big ( \sum_{j = 1}^{\infty} 2^{-jsq} 2^{-jd \frac{q}{v}} \Big \Vert  \Big ( \int_{B(x,N 2^{j})}\vert f(z) \vert^{v} dz \Big )^{\frac{1}{v}} \Big \vert  \mathcal{M}^{u}_{p}( \mathbb{R}^d) \Big \Vert^{q}   \Big )^{\frac{1}{q}} \\
& \qquad \leq \Big ( \sum_{j = 1}^{\infty} 2^{-jsq} 2^{-jd \frac{q}{v}} \Big \Vert  \Big (  \sum_{i = 1}^{( 2 N \cdot 2^{j} + 1 )^{d}} \int_{B(x + w_{i} , 1 )}\vert f(z) \vert^{v} dz \Big )^{\frac{1}{v}} \Big \vert  \mathcal{M}^{u}_{p}( \mathbb{R}^d) \Big \Vert^{q}   \Big )^{\frac{1}{q}}.
\end{align*}
We put $ \mu = \min(p,v) $ and remember that we have $ s > d \max  ( 0, \frac{1}{p} - \frac{1}{v}   )  $. Then the translation-invariance of the Morrey spaces in combination with proposition \ref{pro_HNres1} and step 1 of this proof leads to 
\begin{align*}
& \Big ( \sum_{j = 1}^{\infty} 2^{-jsq} 2^{-jd \frac{q}{v}} \Big \Vert  \Big (  \sum_{i = 1}^{( 2 N \cdot 2^{j} + 1 )^{d}} \int_{B(x + w_{i} , 1 )}\vert f(z) \vert^{v} dz \Big )^{\frac{1}{v}} \Big \vert  \mathcal{M}^{u}_{p}( \mathbb{R}^d) \Big \Vert^{q}   \Big )^{\frac{1}{q}} \\
& \qquad \leq C_{2} \Big ( \sum_{j = 1}^{\infty} 2^{-jsq} 2^{-jd \frac{q}{v}}  2^{jd \frac{q}{\mu}}  \Big \Vert  \Big (  \int_{B(x  , 1 )}\vert f(z) \vert^{v} dz \Big )^{\frac{1}{v}} \Big \vert  \mathcal{M}^{u}_{p}( \mathbb{R}^d) \Big \Vert^{q}   \Big )^{\frac{1}{q}} \\
& \qquad \leq C_{3} \Big \Vert  \Big (  \int_{B(x  , 1 )}\vert f(z) \vert^{v} dz \Big )^{\frac{1}{v}} \Big \vert  \mathcal{M}^{u}_{p}( \mathbb{R}^d) \Big \Vert \leq C_{4} \Vert f \vert \mathcal{N}^{s}_{u,p,q}(\R) \Vert^{(v,1)}.
\end{align*}

{\em Step 3.} At last we look at the case $ 1 < a < \infty $. But here the proof is just a simple consequence of the things we did before. 
\end{proof}

Now it is really easy to prove theorem \ref{MR1}.

{\bf \textit{Proof of theorem \ref{MR1}}. }
To prove theorem \ref{MR1} we just have to use theorem \ref{thm_MR_va} with $ a = \infty $. Then we get exactly what we want.

\vspace{0,2 cm}

It is also possible to describe the Besov-Morrey spaces by a generalisation of a modulus of smoothness. 

\begin{satz}\label{thm_MoS}
Let $ 0 < p \leq u < \infty  $, $ 0 < q \leq \infty   $ and $ s > d \max  ( 0 , \frac{1}{p}  - 1  )  $. Let $ N \in \mathbb{N}  $ with $ N > s  $. Then a function $ f \in L_{\max(1,p)}^{loc}(\R)  $ belongs to $  \mathcal{N}^{s}_{u,p,q}(\R) $ if and only if
\begin{align*}
\Vert f \vert \mathcal{N}^{s}_{u,p,q}(\R) \Vert^{(\omega)} :=    \Vert f \vert \mathcal{M}^{u}_{p}( \mathbb{R}^d)   \Vert + \Big ( \int_{0}^{ \infty } t^{-sq} \Big [ \sup_{|h| \leq t } \Vert  \Delta^{N}_{h}f  \vert  \mathcal{M}^{u}_{p}( \mathbb{R}^d)  \Vert  \Big ]^{q}  \frac{dt}{t} \Big )^{\frac{1}{q}}   < \infty.  
\end{align*}
The quasi-norms $ \Vert f \vert \mathcal{N}^{s}_{u,p,q}(\R) \Vert  $ and $ \Vert f \vert \mathcal{N}^{s}_{u,p,q}(\R) \Vert^{(\omega)} $  are equivalent for $ f \in L_{\max(1,p)}^{loc}(\R) $. In the case $ q = \infty  $ the usual modifications have to be made.
\end{satz}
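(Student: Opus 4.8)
The plan is to reduce Theorem~\ref{thm_MoS} to the already established Theorem~\ref{thm_MR_va} by specialising the parameter $v$ there to $v=p$. With $v=p$ the condition $s>d\max(0,\frac1p-1,\frac1p-\frac1v)$ required in Theorem~\ref{thm_MR_va} becomes $s>d\max(0,\frac1p-1)=\sigma_p$, which is exactly the hypothesis here, and $N>s$ is unchanged. Since $L_{\max(1,p)}^{loc}(\R)\subset L_p^{loc}(\R)$, every $f$ considered in Theorem~\ref{thm_MoS} lies in $L_p^{loc}(\R)$, so Theorem~\ref{thm_MR_va} applies and tells us that $\Vert f\vert\mathcal N^s_{u,p,q}(\R)\Vert^{(p,\infty)}$ (the quasi-norm of Theorem~\ref{thm_MR_va} with $v=p$, $a=\infty$) is equivalent to $\Vert f\vert\mathcal N^s_{u,p,q}(\R)\Vert$. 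Hence the theorem will follow once I show that, for all $g\in L_p^{loc}(\R)$, the quasi-norms $\Vert g\vert\mathcal N^s_{u,p,q}(\R)\Vert^{(\omega)}$ and $\Vert g\vert\mathcal N^s_{u,p,q}(\R)\Vert^{(p,\infty)}$ are equivalent, with equivalence understood to include finiteness. I note that $s>\sigma_p\geq 0$ rules out every case of Lemma~\ref{l_bp2}, so membership in $\mathcal N^s_{u,p,q}(\R)$ already forces $g\in\mathcal M^u_p(\R)\subset L_p^{loc}(\R)$ and the difference $\Delta^N_h g$ is an honest almost-everywhere defined function; thus the local integrability hypotheses create no difficulty.

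The estimate $\Vert g\vert\mathcal N^s_{u,p,q}(\R)\Vert^{(p,\infty)}\lesssim\Vert g\vert\mathcal N^s_{u,p,q}(\R)\Vert^{(\omega)}$ is the routine half, and it hinges on the fact that the inner integration exponent $v=p$ coincides with the inner exponent of the Morrey space. Fixing $t$ and a ball $B(y,r)$, Fubini's theorem gives $\int_{B(y,r)}\int_{B(0,t)}|\Delta^N_h g(x)|^p\,dh\,dx=\int_{B(0,t)}\Vert\Delta^N_h g\Vert_{L_p(B(y,r))}^p\,dh$; pulling the constant factor $|B(y,r)|^{\frac1u-\frac1p}$ inside the $h$-integral and bounding each slice $|B(y,r)|^{\frac1u-\frac1p}\Vert\Delta^N_h g\Vert_{L_p(B(y,r))}$ by $\Vert\Delta^N_h g\vert\mathcal M^u_p(\R)\Vert\leq\sup_{|h'|\leq t}\Vert\Delta^N_{h'} g\vert\mathcal M^u_p(\R)\Vert$, one obtains after the supremum over $y,r$
\[
\Big\Vert\Big(\int_{B(0,t)}|\Delta^N_h g|^p\,dh\Big)^{\frac1p}\,\Big\vert\,\mathcal M^u_p(\R)\Big\Vert\leq |B(0,t)|^{\frac1p}\sup_{|h|\leq t}\Vert\Delta^N_h g\vert\mathcal M^u_p(\R)\Vert .
\]
Because $|B(0,t)|^{1/p}\approx t^{d/p}$, the weight $t^{-d/p}$ in $\Vert\cdot\Vert^{(p,\infty)}$ is absorbed, and inserting this pointwise-in-$t$ bound into $\int_0^\infty t^{-sq}\,\frac{dt}{t}$ proves the claim; the case $q=\infty$ is identical.

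The reverse inequality is the substantive one. By the equivalence recalled above it is enough to prove $\Vert g\vert\mathcal N^s_{u,p,q}(\R)\Vert^{(\omega)}\lesssim\Vert g\vert\mathcal N^s_{u,p,q}(\R)\Vert$ for $g\in\mathcal N^s_{u,p,q}(\R)$. I would decompose $g=\sum_{i=0}^\infty g_i$ with $g_i=\mathcal F^{-1}[\varphi_i\mathcal Fg]$ (this converges in $\mathcal M^u_p(\R)$ since $s>0$) and, for $t$ with $2^{-J}\leq t<2^{-J+1}$, split $\Delta^N_h g=\sum_{i\leq J}\Delta^N_h g_i+\sum_{i>J}\Delta^N_h g_i$. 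For the high frequencies $i>J$, writing $\Delta^N_h g_i$ through \eqref{diff_form}, using the translation invariance of $\mathcal M^u_p(\R)$ and the $\min(1,p)$-triangle inequality of the Morrey quasi-norm gives $\Vert\Delta^N_h g_i\vert\mathcal M^u_p(\R)\Vert\lesssim\Vert g_i\vert\mathcal M^u_p(\R)\Vert$ uniformly in $|h|\leq t$. For the low frequencies $i\leq J$, the function $\Delta^N_h g_i$ is the Fourier multiplier with symbol $(e^{ih\cdot\xi}-1)^N$ applied to the band-limited $g_i$, and on $\supp\varphi_i$ this symbol together with its derivatives is of size $(2^i|h|)^N\lesssim(2^i t)^N$, so Lemma~\ref{l_ineq2} supplies the Bernstein-type bound $\Vert\Delta^N_h g_i\vert\mathcal M^u_p(\R)\Vert\lesssim(2^i t)^N\Vert g_i\vert\mathcal M^u_p(\R)\Vert$. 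Combining both estimates with the $\min(1,p)$-triangle inequality, taking the supremum over $|h|\leq t$, multiplying by $2^{Jsq}$ and summing over $J$ reduces the matter to two discrete Hardy (convolution) inequalities: the low-frequency contribution is summable precisely because $N>s$, and the high-frequency contribution precisely because $s>0$ (guaranteed by $s>\sigma_p\geq 0$). The resulting bound is $\lesssim\big(\sum_{i=0}^\infty 2^{isq}\Vert g_i\vert\mathcal M^u_p(\R)\Vert^q\big)^{1/q}=\Vert g\vert\mathcal N^s_{u,p,q}(\R)\Vert$.

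I expect the only genuinely computational point---and thus the main obstacle---to be the low-frequency Bernstein-type estimate through Lemma~\ref{l_ineq2}: one has to check that the localised symbol $(e^{ih\cdot\xi}-1)^N$, after the dilation $\xi\mapsto R\xi$ with $R\approx 2^i$ that the lemma prescribes, has $H_2^\nu(\R)$-norm comparable to $(2^i t)^N$ with a constant independent of $i$ and of $h$ with $|h|\leq t$. Everything else is bookkeeping that proceeds exactly as in the proofs of proposition~4.1 and theorem~6 of \cite{Ho}, where the analogous modulus-of-smoothness characterisation is established for the Triebel--Lizorkin--Morrey spaces $\mathcal E^s_{u,p,q}(\R)$.
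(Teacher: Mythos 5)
Your proposal is correct and its skeleton coincides with the paper's: one direction is obtained by specialising Theorem \ref{thm_MR_va} to $v=p$, $a=\infty$ and applying Fubini's theorem exactly as in Step 1 of the paper's proof, and the other direction proceeds by a Littlewood--Paley decomposition of $f$ with a low-/high-frequency splitting relative to the scale $t\approx 2^{-J}$, the high frequencies being handled via \eqref{diff_form} and translation invariance and the two resulting series converging because $N>s$ and $s>0$ respectively. The one place where you genuinely depart from the paper is the low-frequency Bernstein-type bound $\Vert \Delta^N_h g_i\vert\mathcal M^u_p(\R)\Vert\lesssim (2^i t)^N\Vert g_i\vert\mathcal M^u_p(\R)\Vert$. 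The paper derives it from the pointwise mean-value estimate $\vert(\Delta^N_{2^{-k}h}g_{k+m})(x)\vert\lesssim 2^{-kN}\sup_{|x-y|\leq N2^{-k}}\sum_{|\alpha|=N}\vert D^\alpha g_{k+m}(y)\vert$ combined with the Peetre maximal function $\varphi^\ast_j f$ and its boundedness on $\mathcal M^u_p(\R)$ (a modification of Lemma 1.1.7 of \cite{HN} together with Lemma \ref{l_ineq1}), whereas you apply the multiplier Lemma \ref{l_ineq2} to the localised symbol $(e^{ih\cdot\xi}-1)^N\psi(2^{-i}\xi)$. Your route is viable: after the dilation $\xi\mapsto 2^{i+1}\xi$ the symbol is supported in the unit ball, every factor $(e^{i2^{i+1}h\cdot\xi}-1)$ and every derivative of it is $O(2^i|h|)$, so all derivatives of the product are $O((2^i t)^N)$ uniformly in $i$ and $\vert h\vert\leq t$, which is exactly the $H^\nu_2$-bound Lemma \ref{l_ineq2} asks for; this buys you a proof that stays entirely within the lemmas already stated in the paper and avoids importing the Peetre maximal function machinery, at the price of the symbol computation you yourself flag as the main obstacle. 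The paper's maximal-function argument, by contrast, reuses tools it needs anyway for Proposition \ref{pro_HNres1} and handles the supremum over $\vert x-y\vert\leq N2^{-k}$ and over $\vert h\vert\leq 2^{-k}$ in one stroke. Either way the remaining bookkeeping (the $\tau$-triangle inequality with $\tau=\min(1,p,q)$ and the two discrete convolution inequalities) is standard, so I see no gap in your argument.
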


\begin{proof}
{\em Step 1.} At first we prove that for $  f \in L_{\max(1,p)}^{loc}(\R)  $ there is a constant $ C > 0  $ independent of $ f $ such that $ \Vert f \vert \mathcal{N}^{s}_{u,p,q}(\R) \Vert  \leq C \Vert f \vert \mathcal{N}^{s}_{u,p,q}(\R) \Vert^{(\omega)}  $. To prove this because of $ f \in L_{\max(1,p)}^{loc}(\R) \subset L_{p}^{loc}(\R) $ and $ s > d \max  ( 0 , \frac{1}{p}  - 1  )   $ we can apply theorem \ref{thm_MR_va} with $ a = \infty $ and $ v = p   $. Then we obtain 
\begin{align*}
& \Vert f \vert \mathcal{N}^{s}_{u,p,q}(\R) \Vert \\
& \quad \leq C_{1} \Vert f \vert \mathcal{M}^{u}_{p}( \mathbb{R}^d)   \Vert + C_{1} \Big ( \int_{0}^{\infty} t^{-sq-d \frac{q}{p}} \Big \Vert  \Big ( \int_{B(0,t)}\vert \Delta^{N}_{h}f(x) \vert^{p} dh \Big )^{\frac{1}{p}} \Big \vert  \mathcal{M}^{u}_{p}( \mathbb{R}^d) \Big \Vert^{q}  \frac{dt}{t} \Big )^{\frac{1}{q}}.
\end{align*}
Next we use Fubini's theorem and get
\begin{align*}
& \Big ( \int_{0}^{\infty} t^{-sq-d \frac{q}{p}} \Big [ \sup_{y \in \R, r > 0} \vert B(y,r) \vert^{\frac{1}{u}-\frac{1}{p}} \Big ( \int_{B(y,r)}  \int_{B(0,t)}\vert \Delta^{N}_{h}f(x) \vert^{p} dh \:  dx      \Big )^{\frac{1}{p}} \Big ]^{q}  \frac{dt}{t} \Big )^{\frac{1}{q}} \\
& \quad = \Big ( \int_{0}^{\infty} t^{-sq-d \frac{q}{p}} \Big [ \sup_{y \in \R, r > 0} \vert B(y,r) \vert^{\frac{1}{u}-\frac{1}{p}} \Big ( \int_{B(0,t)}  \int_{B(y,r)}  \vert \Delta^{N}_{h}f(x) \vert^{p} dx \:  dh      \Big )^{\frac{1}{p}} \Big ]^{q}  \frac{dt}{t} \Big )^{\frac{1}{q}} \\
& \quad \leq C_{2} \Big ( \int_{0}^{\infty} t^{-sq} \Big [ \sup_{y \in \R, r > 0} \vert B(y,r) \vert^{\frac{1}{u}-\frac{1}{p}} \Big ( \sup_{|h| \leq t}  \int_{B(y,r)}  \vert \Delta^{N}_{h}f(x) \vert^{p} dx       \Big )^{\frac{1}{p}} \Big ]^{q}  \frac{dt}{t} \Big )^{\frac{1}{q}} \\
& \quad \leq C_{3} \Big ( \int_{0}^{ \infty } t^{-sq} \Big [ \sup_{|h| \leq t } \Vert  \Delta^{N}_{h}f  \vert  \mathcal{M}^{u}_{p}( \mathbb{R}^d)  \Vert  \Big ]^{q}  \frac{dt}{t} \Big )^{\frac{1}{q}}. 
\end{align*}
So step 1 of the proof is complete.

{\em Step 2.}
Now we prove that for $  f \in L_{\max(1,p)}^{loc}(\R) \cap \mathcal{N}^{s}_{u,p,q}(\R)  $ there is a constant $ C > 0  $ independent of $ f $ such that $ \Vert f \vert \mathcal{N}^{s}_{u,p,q}(\R) \Vert^{(\omega)}  \leq C \Vert f \vert \mathcal{N}^{s}_{u,p,q}(\R) \Vert  $. To prove this at first again we can apply theorem \ref{thm_MR_va} with $ v = p   $ and get
\begin{align*}
\Vert f \vert \mathcal{M}^{u}_{p}( \mathbb{R}^d)   \Vert \leq \Vert f \vert \mathcal{N}^{s}_{u,p,q}(\R) \Vert^{(p, \infty)} \leq C_{1} \Vert f \vert \mathcal{N}^{s}_{u,p,q}(\R) \Vert.
\end{align*}
To deal with the term
\begin{align*}
 \Big ( \int_{0}^{ \infty } t^{-sq} \Big [ \sup_{|h| \leq t } \Vert  \Delta^{N}_{h}f  \vert  \mathcal{M}^{u}_{p}( \mathbb{R}^d)  \Vert  \Big ]^{q}  \frac{dt}{t} \Big )^{\frac{1}{q}}  
\end{align*}
in the following we will use some ideas from Triebel, see chapter 2.5.11. in \cite{Tr83}. At first we transform the integral concerning $ t $ into a sum. We obtain
\begin{align*}
& \Big ( \int_{0}^{ \infty } t^{-sq} \Big [ \sup_{|h| \leq t } \Vert  \Delta^{N}_{h}f  \vert  \mathcal{M}^{u}_{p}( \mathbb{R}^d)  \Vert  \Big ]^{q}  \frac{dt}{t} \Big )^{\frac{1}{q}} \\
& \qquad \leq C_{1} \Big ( \sum_{k = - \infty}^{\infty} 2^{ksq} \Big [ \sup_{|h| \leq 2^{-k} } \Vert  \Delta^{N}_{h}f  \vert  \mathcal{M}^{u}_{p}( \mathbb{R}^d)  \Vert  \Big ]^{q}  \Big )^{\frac{1}{q}} . 
\end{align*}
Now let $ (\varphi_{j})_{j \in \N_0 }$ be a smooth dyadic decomposition of the unity. We put $  \varphi_{j} = 0 $ for $ j < 0 $. Then because of $ s > d \max  ( 0 , \frac{1}{p}  - 1  )  $ and $  f \in  \mathcal{N}^{s}_{u,p,q}(\R)  $ for every $   k \in \mathbb{Z} $ we have 
\begin{align*}
f = \sum_{m = - \infty}^{\infty} \mathcal{F}^{-1}[\varphi_{k + m} \mathcal{F}f] 
\end{align*}
with convergence not only in $  \mathcal{S}'(\mathbb{R}^{d}) $ but also in $  \mathcal{M}^{u}_{p}( \mathbb{R}^d) $. Let $ \tau = \min(1,p,q) $. We get
\begin{align*}
& \Big ( \sum_{k = - \infty}^{\infty} 2^{ksq} \Big [ \sup_{|h| \leq 2^{-k} } \Vert  \Delta^{N}_{h}f  \vert  \mathcal{M}^{u}_{p}( \mathbb{R}^d)  \Vert  \Big ]^{q}  \Big )^{\frac{\tau}{q}} \\
& \qquad = \Big ( \sum_{k = - \infty}^{\infty} 2^{ksq} \Big [ \sup_{|h| \leq 2^{-k} } \Big \Vert  \Delta^{N}_{h} \Big ( \sum_{m = - \infty}^{\infty} \mathcal{F}^{-1}[\varphi_{k + m} \mathcal{F}f] \Big ) \Big  \vert  \mathcal{M}^{u}_{p}( \mathbb{R}^d) \Big  \Vert  \Big ]^{q}  \Big )^{\frac{\tau}{q}} \\
& \qquad \leq \sum_{m = - \infty}^{\infty} \Big ( \sum_{k = - \infty}^{\infty} 2^{ksq} \Big [ \sup_{|h| \leq 2^{-k} } \Big \Vert  \Delta^{N}_{h} \mathcal{F}^{-1}[\varphi_{k + m} \mathcal{F}f]  \Big  \vert  \mathcal{M}^{u}_{p}( \mathbb{R}^d) \Big  \Vert  \Big ]^{q}  \Big )^{\frac{\tau}{q}}.
\end{align*}
Next we split up the outer sum. We obtain
\begin{align*}
&  \sum_{m = - \infty}^{\infty} \Big ( \sum_{k = - \infty}^{\infty} 2^{ksq} \Big [ \sup_{|h| \leq 2^{-k} } \Big \Vert  \Delta^{N}_{h} \mathcal{F}^{-1}[\varphi_{k + m} \mathcal{F}f]  \Big  \vert  \mathcal{M}^{u}_{p}( \mathbb{R}^d) \Big  \Vert  \Big ]^{q}  \Big )^{\frac{\tau}{q}} \\
& \qquad = \sum_{m = - \infty}^{-1} \Big ( \sum_{k = - \infty}^{\infty} 2^{ksq} \Big [ \sup_{|h| \leq 2^{-k} } \Big \Vert  \Delta^{N}_{h} \mathcal{F}^{-1}[\varphi_{k + m} \mathcal{F}f]  \Big  \vert  \mathcal{M}^{u}_{p}( \mathbb{R}^d) \Big  \Vert  \Big ]^{q}  \Big )^{\frac{\tau}{q}} \\
& \qquad \qquad + \sum_{m = 0}^{\infty} \Big ( \sum_{k = - \infty}^{\infty} 2^{ksq} \Big [ \sup_{|h| \leq 2^{-k} } \Big \Vert  \Delta^{N}_{h} \mathcal{F}^{-1}[\varphi_{k + m} \mathcal{F}f]  \Big  \vert  \mathcal{M}^{u}_{p}( \mathbb{R}^d) \Big  \Vert  \Big ]^{q}  \Big )^{\frac{\tau}{q}}.
\end{align*}
In what follows at first we will deal with the case $ m < 0 $. Here we have to start with some preliminary considerations that also can be found in chapter 2.5.11. in \cite{Tr83}. So for every $ |h| \leq 1  $ and $ x \in \R  $ there is a constant $ C_{2} > 0 $ independent of $ f $ and $ x $ such that
\begin{align*}
\vert (\Delta^{N}_{2^{-k}h} \mathcal{F}^{-1}[\varphi_{k + m} \mathcal{F}f])(x) \vert \leq C_{2} 2^{-kN} \sup_{|x-y|\leq N 2^{-k}} \sum_{| \alpha | = N} \vert ( D^{\alpha} \mathcal{F}^{-1}[\varphi_{k + m} \mathcal{F}f])(y) \vert.
\end{align*}
Moreover for $ j \in \mathbb{Z}   $ and $  a > 0 $ we will define the function
\begin{align*}
(\varphi_{j}^{\ast}f)(x) = \sup_{y \in \R} \frac{\vert ( \mathcal{F}^{-1}[\varphi_{j} \mathcal{F}f])(x - y) \vert}{1 + ( 2^{j + 2}| y |)^{a}}.
\end{align*}
Notice that for $ j < 0 $ because of $ \varphi_{j} = 0  $ we also have $ \varphi_{j}^{\ast}f = 0 $. Then for $ | \alpha | = N    $ and $  y \in \R $ there is a constant $ C_{3} > 0  $ independent of $ f $ and $ y $ such that
\begin{align*}
\vert ( D^{\alpha} \mathcal{F}^{-1}[\varphi_{k + m} \mathcal{F}f])(y) \vert \leq C_{3} 2^{(k + m )N} (\varphi_{k + m}^{\ast}f)(y).
\end{align*}
When we use this estimates because of the properties of the function $ \varphi_{j}^{\ast}f $ we obtain  
\begin{align*}
& \sum_{m = - \infty}^{-1} \Big ( \sum_{k = - \infty}^{\infty} 2^{ksq} \Big [ \sup_{|h| \leq 2^{-k} } \Big \Vert  \Delta^{N}_{h} \mathcal{F}^{-1}[\varphi_{k + m} \mathcal{F}f]  \Big  \vert  \mathcal{M}^{u}_{p}( \mathbb{R}^d) \Big  \Vert  \Big ]^{q}  \Big )^{\frac{\tau}{q}} \\
&  \qquad \leq C_{4} \sum_{m = - \infty}^{-1} \Big ( \sum_{k = - \infty}^{\infty} 2^{ksq} 2^{mNq}   \Big \Vert   \sup_{|x-y|\leq N 2^{-k}}   (\varphi_{k + m}^{\ast}f)(y)  \Big  \vert  \mathcal{M}^{u}_{p}( \mathbb{R}^d) \Big  \Vert^{q}  \Big )^{\frac{\tau}{q}} \\
&  \qquad \leq C_{5} \sum_{m = - \infty}^{-1} 2^{ (N - s)  m \tau}  \Big ( \sum_{k = - \infty}^{\infty} 2^{(k + m)sq}     \Vert    \varphi_{k + m}^{\ast}f    \vert  \mathcal{M}^{u}_{p}( \mathbb{R}^d)   \Vert^{q}  \Big )^{\frac{\tau}{q}}.
\end{align*}
Next we put $ k + m = j  $. Then since $ N > s  $ we get
\begin{align*}
& \sum_{m = - \infty}^{-1} \Big ( \sum_{k = - \infty}^{\infty} 2^{ksq} \Big [ \sup_{|h| \leq 2^{-k} } \Big \Vert  \Delta^{N}_{h} \mathcal{F}^{-1}[\varphi_{k + m} \mathcal{F}f]  \Big  \vert  \mathcal{M}^{u}_{p}( \mathbb{R}^d) \Big  \Vert  \Big ]^{q}  \Big )^{\frac{\tau}{q}} \\
&  \qquad \leq C_{5} \sum_{m = - \infty}^{-1} 2^{ (N - s)  m \tau}  \Big ( \sum_{j = - \infty}^{\infty} 2^{jsq}     \Vert    \varphi_{j}^{\ast}f    \vert  \mathcal{M}^{u}_{p}( \mathbb{R}^d)   \Vert^{q}  \Big )^{\frac{\tau}{q}}\\
&  \qquad \leq C_{6}   \Big ( \sum_{j = 0}^{\infty} 2^{jsq}     \Vert    \varphi_{j}^{\ast}f    \vert  \mathcal{M}^{u}_{p}( \mathbb{R}^d)   \Vert^{q}  \Big )^{\frac{\tau}{q}}.
\end{align*}
Now let $ a > \frac{d}{p}  $. Then a modification of lemma 1.1.7. from \cite{HN} in combination with lemma \ref{l_ineq1} from this paper (see also theorem 1.1. in \cite{yyMF}) yields 
\begin{align*}
& \sum_{m = - \infty}^{-1} \Big ( \sum_{k = - \infty}^{\infty} 2^{ksq} \Big [ \sup_{|h| \leq 2^{-k} } \Big \Vert  \Delta^{N}_{h} \mathcal{F}^{-1}[\varphi_{k + m} \mathcal{F}f]  \Big  \vert  \mathcal{M}^{u}_{p}( \mathbb{R}^d) \Big  \Vert  \Big ]^{q}  \Big )^{\frac{\tau}{q}} \\
&  \qquad \leq C_{7}   \Big ( \sum_{j = 0}^{\infty} 2^{jsq}     \Vert   \mathcal{F}^{-1}[\varphi_{j} \mathcal{F}f]      \vert  \mathcal{M}^{u}_{p}( \mathbb{R}^d)   \Vert^{q}  \Big )^{\frac{\tau}{q}} \\
&  \qquad = C_{7} \Vert f \vert \mathcal{N}^{s}_{u,p,q}(\R) \Vert^{\tau}  .
\end{align*}
Next we will deal with the case $ m \geq 0  $. Here at first we can use
\begin{align*}
( \Delta^{N}_{h} \mathcal{F}^{-1}[\varphi_{k + m} \mathcal{F}f]) (x) = \sum_{l = 0}^{N} (-1)^{N-l} { N \choose l }  \mathcal{F}^{-1}[\varphi_{k + m} \mathcal{F}f]  ( x + l h ).
\end{align*}
Recall the translation-invariance of the Morrey spaces. Then we obtain
\begin{align*}
& \sum_{m = 0}^{\infty} \Big ( \sum_{k = - \infty}^{\infty} 2^{ksq} \Big [ \sup_{|h| \leq 2^{-k} } \Big \Vert  \Delta^{N}_{h} \mathcal{F}^{-1}[\varphi_{k + m} \mathcal{F}f]  \Big  \vert  \mathcal{M}^{u}_{p}( \mathbb{R}^d) \Big  \Vert  \Big ]^{q}  \Big )^{\frac{\tau}{q}} \\
& \qquad \leq C_{8} \sum_{m = 0}^{\infty} \Big ( \sum_{k = - \infty}^{\infty} 2^{ksq} \Big [ \sup_{|h| \leq 2^{-k} } \sum_{l = 0}^{N} \Big \Vert    \mathcal{F}^{-1}[\varphi_{k + m} \mathcal{F}f]  ( x + l h )  \Big  \vert  \mathcal{M}^{u}_{p}( \mathbb{R}^d) \Big  \Vert  \Big ]^{q}  \Big )^{\frac{\tau}{q}} \\
& \qquad \leq C_{9} \sum_{m = 0}^{\infty} 2^{- ms \tau} \Big ( \sum_{k = - \infty}^{\infty} 2^{ksq} 2^{msq}   \Vert    \mathcal{F}^{-1}[\varphi_{k + m} \mathcal{F}f]      \vert  \mathcal{M}^{u}_{p}( \mathbb{R}^d)  \Vert^{q}  \Big )^{\frac{\tau}{q}} .
\end{align*}
Now we put $ k + m = j  $. Then since $ s > 0  $ we find
\begin{align*}
& \sum_{m = 0}^{\infty} \Big ( \sum_{k = - \infty}^{\infty} 2^{ksq} \Big [ \sup_{|h| \leq 2^{-k} } \Big \Vert  \Delta^{N}_{h} \mathcal{F}^{-1}[\varphi_{k + m} \mathcal{F}f]  \Big  \vert  \mathcal{M}^{u}_{p}( \mathbb{R}^d) \Big  \Vert  \Big ]^{q}  \Big )^{\frac{\tau}{q}} \\
& \qquad \leq C_{9} \sum_{m = 0}^{\infty} 2^{- ms \tau} \Big ( \sum_{j = 0}^{\infty} 2^{jsq}    \Vert    \mathcal{F}^{-1}[\varphi_{j} \mathcal{F}f]      \vert  \mathcal{M}^{u}_{p}( \mathbb{R}^d)  \Vert^{q}  \Big )^{\frac{\tau}{q}} \\
& \qquad \leq C_{10}  \Big ( \sum_{j = 0}^{\infty} 2^{jsq}    \Vert    \mathcal{F}^{-1}[\varphi_{j} \mathcal{F}f]      \vert  \mathcal{M}^{u}_{p}( \mathbb{R}^d)  \Vert^{q}  \Big )^{\frac{\tau}{q}} \\
&  \qquad = C_{10} \Vert f \vert \mathcal{N}^{s}_{u,p,q}(\R) \Vert^{\tau}  .
\end{align*}

So the proof is complete.
\end{proof}

Characterizations in terms of differences can be used to investigate whether some functions belong to a space $ \mathcal{N}^{s}_{u,p,q}(\R) $ or not. 

\begin{lem}\label{lem_f_a_d}
Let $s > 0 $, $ 1 \leq p \leq u < \infty $ and $ 0 < q \leq \infty  $. Let $ \alpha < 0 $, $ \delta \geq 0  $ and $ \vartheta > 0  $ with $  \vartheta $ very small. $ \rho \in C_{0}^{\infty}(\R)  $ is a smooth cut-off function with $ \rho(x) = 1  $ for $ x \in B(0 , \vartheta)  $ and $ \rho(x) = 0  $ for $ |x| > 2 \vartheta  $. We put
\begin{align*}
f_{\alpha, \delta}(x) = \rho(x) |x|^{\alpha} ( - \ln |x|)^{- \delta}.
\end{align*}

\begin{itemize}
\item[(i)] Let $ \delta = 0  $. Then we have $ f_{\alpha, 0} \in \mathcal{N}^{s}_{u,p,q}(\R)   $ if and only if we have either $s < \frac{d}{u} + \alpha   $ or $ s =  \frac{d}{u} + \alpha $ and $ q = \infty $. 

\item[(ii)] Let $ \delta > 0 $. Then we have $ f_{\alpha, \delta} \in \mathcal{N}^{s}_{u,p,q}(\R)   $ if and only if we have either $  s < \frac{d}{u} + \alpha   $ or $ s =  \frac{d}{u} + \alpha  $ with $  \delta q > 1  $.

\end{itemize}

\end{lem}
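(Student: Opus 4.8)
The plan is to deduce the lemma from the modulus-of-smoothness characterization in Theorem \ref{thm_MoS}. Since $p\geq 1$ we have $d\max(0,\frac1p-1)=0<s$, so the hypotheses of Theorem \ref{thm_MoS} are met for any $N\in\mathbb N$ with $N>s$, and $L_{\max(1,p)}^{loc}(\R)=L_p^{loc}(\R)$. I would first record that a member of $\mathcal N^s_{u,p,q}(\R)$ must satisfy $\alpha>-d/u$: evaluating Definition \ref{def_mor} on balls $B(0,r)$ gives $\|f_{\alpha,\delta}\,|\,\mathcal M^u_p(\R)\|\approx\sup_{0<r<\vartheta} r^{d/u+\alpha}(-\ln r)^{-\delta}$, which is finite precisely when $\alpha\ge -d/u$, while every stated membership condition forces $d/u+\alpha\ge s>0$. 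Hence in the membership regime $f_{\alpha,\delta}\in\mathcal M^u_p(\R)\subset L_p^{loc}(\R)$ and Theorem \ref{thm_MoS} applies; for $\alpha\le -d/u$ the Morrey term already diverges (and for $\alpha\le -d$ one instead uses that $s>0=\frac pu\sigma_p$ excludes all cases of Lemma \ref{l_bp2}, so $\mathcal N^s_{u,p,q}(\R)\subset L_1^{loc}(\R)$, which $f_{\alpha,\delta}$ then violates), giving non-membership consistent with the claim.

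The heart of the proof is the sharp two-sided estimate
$$\sup_{|h|\le t}\big\|\,\Delta^N_h f_{\alpha,\delta}\,\big|\,\mathcal M^u_p(\R)\big\|\ \approx\ t^{\,d/u+\alpha}\,(-\ln t)^{-\delta},\qquad t\to 0^+.$$
For the upper bound I fix $|h|\le t$ and split a test ball according to distance to the origin. On the part within distance $\lesssim t$ of $0$ one bounds $|\Delta^N_h f_{\alpha,\delta}|\lesssim\sum_{k}|f_{\alpha,\delta}(x+kh)|\lesssim|x|^{\alpha}(-\ln|x|)^{-\delta}$ and integrates, using that $(-\ln\cdot)^{-\delta}$ is slowly varying, obtaining order $t^{\alpha+d/p}(-\ln t)^{-\delta}$. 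On the part where all points $x+kh$ lie where $\rho\equiv 1$ and away from $0$ one uses $|\Delta^N_h f_{\alpha,\delta}(x)|\lesssim|h|^N\sup_{|z-x|\le N|h|}|D^N f_{\alpha,\delta}(z)|\lesssim t^N|x|^{\alpha-N}(-\ln|x|)^{-\delta}$, whose integral is dominated by the endpoint $|x|\sim t$ and yields the same order; the smooth transition layer $\vartheta\le|x|\le 2\vartheta$ contributes only $O(t^N)$. Multiplying by the weight $|B(0,r)|^{1/u-1/p}\approx r^{d/u-d/p}$ and using $u\ge p$, the supremum over $r$ is attained at $r\sim t$, producing $t^{d/u+\alpha}(-\ln t)^{-\delta}$. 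The matching lower bound comes from testing one ball $B(0,ct)$ together with a single well-chosen $h$ with $|h|\sim t$ for which the difference does not cancel near the singularity, so that $|\Delta^N_h f_{\alpha,\delta}(x)|\gtrsim|x|^\alpha(-\ln|x|)^{-\delta}$ on a subset of measure $\gtrsim t^d$.

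Inserting this into the $(\omega)$-norm of Theorem \ref{thm_MoS} reduces the question to the scalar quantity
$$\Big(\int_0^{c} t^{(d/u+\alpha-s)q}\,(-\ln t)^{-\delta q}\,\frac{dt}{t}\Big)^{1/q},$$
the range $t\ge c$ being harmless since $s>0$ and $f_{\alpha,\delta}$ is compactly supported. The substitution $\tau=-\ln t$ turns this into $\int_{\tau_0}^{\infty} e^{-(d/u+\alpha-s)q\tau}\tau^{-\delta q}\,d\tau$, which converges if and only if $d/u+\alpha-s>0$, or $d/u+\alpha-s=0$ together with $\delta q>1$. For $\delta=0$ this is $s<d/u+\alpha$ when $q<\infty$, and the endpoint $s=d/u+\alpha$ survives only for $q=\infty$, where the integral is replaced by the finite $\sup_{0<t\le c}t^{\,d/u+\alpha-s}(-\ln t)^{-\delta}$; this is exactly (i). For $\delta>0$ one reads off $s<d/u+\alpha$, or $s=d/u+\alpha$ with $\delta q>1$ (automatic when $q=\infty$), which is exactly (ii).

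I expect the main obstacle to be the sharp equivalence of the second paragraph. The upper bound requires carrying the slowly varying logarithm faithfully through the region decomposition and verifying that the far-field Taylor contribution is genuinely governed by its lower endpoint $|x|\sim t$; the lower bound requires showing that the $N$-th order difference of $f_{\alpha,\delta}$ truly fails to cancel at scale $|h|\sim|x|$ near the origin. Once this equivalence and the logarithmic substitution are established, the delicate endpoint distinctions in $s$, $q$ and $\delta$ follow mechanically.
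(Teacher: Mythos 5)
Your route is genuinely different from the paper's. The paper deduces the lemma from Theorem \ref{thm_MR_va} with $v=p$ and $a$ small for sufficiency, and with $v=1$, $N=1$ and $a$ small for necessity (after an embedding step from \cite{HaSkCE} when $s\ge 1$), outsourcing the resulting computation to Lemma 1 in Chapter 2.3.1 of \cite{RS}; you instead work directly with the sup-modulus characterization of Theorem \ref{thm_MoS} for a general admissible $N$ and do the computation yourself. That is a legitimate and more self-contained strategy, and your reduction to the scalar integral, the logarithmic substitution, and the endpoint bookkeeping in $s$, $q$, $\delta$ are all correct. What the paper's detour through $N=1$ buys is precisely that it avoids the delicate control of $N$-th order differences of $|x|^{\alpha}$ where your argument goes wrong.

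The central two-sided estimate is false as stated: $\sup_{|h|\le t}\Vert \Delta^N_h f_{\alpha,\delta}\vert \mathcal{M}^u_p(\R)\Vert \approx t^{d/u+\alpha}(-\ln t)^{-\delta}$ fails whenever $N< d/u+\alpha$, which is compatible with all hypotheses (take $d=5$, $p=u=1$, $\alpha=-1$, $s=1/2$, $N=1$). The culprit is exactly the step you flagged: the far-field integral $\int_{ct\le |x|\le \vartheta} t^{Np}|x|^{(\alpha-N)p}(-\ln|x|)^{-\delta p}\,dx$ is governed by its lower endpoint only when $(\alpha-N)p+d<0$, i.e. $N>\alpha+d/p$; otherwise it is governed by $|x|\sim\vartheta$ and contributes a term of exact order $t^{N}$ --- as it must, since $f_{\alpha,\delta}$ is smooth and non-constant away from the origin, so $\sup_{|h|\le t}\Vert \Delta^N_h f_{\alpha,\delta}\vert \mathcal{M}^u_p(\R)\Vert\gtrsim c(\vartheta)\,t^{N}$ always. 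The correct asymptotics is $\approx t^{d/u+\alpha}(-\ln t)^{-\delta}+t^{N}$. Fortunately this does not damage the lemma: since $N>s$ the extra $t^{N}$ is integrable against $t^{-sq}\,dt/t$, and at the endpoint $s=d/u+\alpha$ one automatically has $N>d/u+\alpha$, so the logarithmic factor still governs there; but the equivalence must be corrected and this must be said. Two smaller repairs: the pointwise bound $|f_{\alpha,\delta}(x+kh)|\lesssim |x|^{\alpha}(-\ln|x|)^{-\delta}$ is false (the shifted point can be much closer to the origin), only the integrated version survives; and for $-d<\alpha\le -d/p$ (possible when $p>1$) the function is not in $L_p^{loc}(\R)$, so Theorem \ref{thm_MoS} is not applicable and you need the embedding $\mathcal{N}^s_{u,p,q}(\R)\hookrightarrow \mathcal{M}^u_p(\R)$ for $s>0$ to conclude non-membership from the divergent Morrey norm.
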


\begin{proof} To prove sufficiency we use a version of theorem \ref{thm_MR_va} with $ v = p $ and $ a $ small. At first we transform the quasi-norm $  \Vert f_{\alpha, \delta} \vert \mathcal{N}^{s}_{u,p,q}(\R) \Vert^{(p,a)} $ like it is described in step 1 of the proof from theorem \ref{thm_MoS}. After this we proceed like it is explained in the proof of lemma 1 from chapter 2.3.1. in \cite{RS}. Also to prove necessity we can use the techniques that are described in the proof of this lemma. For that purpose in the case $ 0 < s < 1 $ we have to use theorem \ref{thm_MR_va} with $ v = 1  $, $ N = 1 $ and $ a $ small. In the case $ s \geq 1 $ at first we have to apply theorem 3.3. from \cite{HaSkCE}. Then we can proceed like in the case $ 0 < s < 1 $ and obtain the desired result.
\end{proof}

\section{Besov-Morrey spaces and differences: necessary conditions}

As you can see in theorem \ref{thm_MR_va} some conditions concerning the parameter $ s $ do appear. In detail the restrictions
\begin{equation}\label{s_res1}
s ~ > ~ d \max \left ( 0, \frac{1}{p} - 1, \frac{1}{p} - \frac{1}{v} \right ) \qquad \mbox{and} \qquad N > s 
\end{equation} 
can be found. In this chapter our main goal is to investigate whether these conditions are not only sufficient but also necessary. For this purpose we will define the following function spaces.

\begin{defi}\label{def_Dif_sp}
Let $ s \in \mathbb{R}  $, $ 0 < p \leq u < \infty $, $ 0 < q \leq \infty $, $ 0 < v \leq \infty $, $ 1 \leq a \leq \infty $ and  $ N \in \mathbb{N} $. Then $ {\bf N}^{s, N, a}_{u, p, q, v}(\R) $ is the collection of all $ f \in L_{\max(p,v)}^{loc}(\R)$ such that 
$ \Vert f \vert \mathcal{N}^{s}_{u,p,q}(\R)  \Vert^{(v,a)} $ is finite. 
\end{defi}

In what follows we will investigate in which cases we have $ \mathcal{N}^{s}_{u,p,q}(\R) \not =  {\bf N}^{s, N, a}_{u, p, q, v}(\R)  $. To answer this question we will use a lot of different techniques. Therefore it seems to be reasonable to examine each condition separately. In a first step we will deal with the condition $ s > 0 $.

\subsection{The necessity of  $ s > 0 $}

It will turn out that it is not possible to describe the Besov-Morrey spaces in terms of differences in the case $ s < 0 $. Also for $ s = 0 $ in many cases we can prove that the spaces $  \mathcal{N}^{s}_{u,p,q}(\R)  $ do not coincide with $ {\bf N}^{s, N, a}_{u, p, q, v}(\R)  $.

\begin{prop}\label{nec_0MR1}
Let $ s \in \mathbb{R}  $, $ 0 < p \leq u < \infty $, $ 0 < q \leq \infty $, $ 0 < v \leq \infty $ and $ N \in \mathbb{N} $ with $ N > s $. Then the following assertions are true.

\begin{itemize}
\item[(i)]
Let $ s < 0 $ and $ 1 \leq a \leq \infty $. Then we have $  \mathcal{N}^{s}_{u,p,q}(\R) \not =  {\bf N}^{s, N, a}_{u, p, q, v}(\R)   $. 

\item[(ii)]
Let $ s = 0 $, $ 1 \leq a \leq \infty $ and $ p \geq 2  $ with $ q > 2  $. Then we have $  \mathcal{N}^{0}_{u,p,q}(\R) \not =  {\bf N}^{0, N, a}_{u, p, q, v}(\R)    $.

\item[(iii)]
Let $ s = 0 $, $ 1 \leq a \leq \infty $ and $ 1 \leq p < 2  $ with $ q > p $. Then we have $  \mathcal{N}^{0}_{u,p,q}(\R) \not =  {\bf N}^{0, N, a}_{u, p, q, v}(\R)    $.

\item[(iv)]
Let $ s = 0 $ and $ a = \infty $. Then we have $  \mathcal{N}^{0}_{u,p,q}(\R) \not =  {\bf N}^{0, N, \infty}_{u, p, q, v}(\R)    $.

\end{itemize}

\end{prop}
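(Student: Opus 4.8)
The plan is to establish the inequality $\mathcal{N}^{s}_{u,p,q}(\R)\neq\mathbf{N}^{s,N,a}_{u,p,q,v}(\R)$ through two quite different mechanisms: a \emph{singularity} argument, which reduces directly to Lemma \ref{l_bp2}, and for the genuinely new case a \emph{tail-divergence} computation of the quasi-norm $\Vert\cdot\vert\mathcal{N}^{s}_{u,p,q}(\R)\Vert^{(v,\infty)}$. The common structure is that $\mathbf{N}^{s,N,a}_{u,p,q,v}(\R)$ consists, by Definition \ref{def_Dif_sp}, entirely of functions lying in $L^{loc}_{\max(p,v)}(\R)$, so any element of $\mathcal{N}^{s}_{u,p,q}(\R)$ that is not such a function immediately separates the two spaces.

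For (i)--(iii) I would show that $\mathcal{N}^{s}_{u,p,q}(\R)\not\subset L^{loc}_1(\R)$ by checking that the hypotheses fall under Lemma \ref{l_bp2}. In (i), since $\sigma_p\ge 0$ we have $s<0\le\frac{p}{u}\sigma_p$, hence $s<\frac{p}{u}\sigma_p$ and Lemma \ref{l_bp2}(i) applies; in (ii) the conditions $s=0$, $p\ge 2$, $q>2$ are exactly Lemma \ref{l_bp2}(iii); in (iii) the conditions $s=0$, $1\le p<2$, $q>p$ are exactly Lemma \ref{l_bp2}(iv). In each situation there is a distribution $f\in\mathcal{N}^{s}_{u,p,q}(\R)$ that is not represented by any locally integrable function, hence not by any function in $L^{loc}_{\max(p,v)}(\R)$, so $f\notin\mathbf{N}^{s,N,a}_{u,p,q,v}(\R)$ and the spaces differ for every admissible $a$. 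For (iv) with $q=\infty$ the same idea works: when $0<p<1$ one has $0=s<\frac{p}{u}\sigma_p=\frac{d}{u}(1-p)$ so Lemma \ref{l_bp2}(i) applies, while for $p\ge 2$ (resp.\ $1\le p<2$) the value $q=\infty$ satisfies $q>2$ (resp.\ $q>p$), so Lemma \ref{l_bp2}(iii) (resp.\ (iv)) applies.

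It remains to treat (iv) for $q<\infty$, where $\mathcal{N}^{0}_{u,p,q}(\R)$ may well sit inside $L^{loc}_1(\R)$ and the singularity idea is unavailable. Here I would fix a nonzero $f\in C_0^\infty(\R)$ with $\supp f\subset B(0,1)$; then $f\in\mathcal{S}(\R)\subset\mathcal{N}^{0}_{u,p,q}(\R)$ by Lemma \ref{l_bp1}(iv), and I claim $\Vert f\vert\mathcal{N}^{0}_{u,p,q}(\R)\Vert^{(v,\infty)}=\infty$, so $f\notin\mathbf{N}^{0,N,\infty}_{u,p,q,v}(\R)$. The point is the $k=0$ summand in $\Delta^{N}_{h}f(x)=\sum_{k=0}^{N}(-1)^{N-k}\binom{N}{k}f(x+kh)$ from \eqref{diff_form}, which equals $(-1)^{N}f(x)$ and is constant in $h$. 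For $|h|>2$ and $x\in B(0,1)$ all terms with $k\ge 1$ vanish, so for large $t$ one gets $\int_{B(0,t)}|\Delta^{N}_{h}f(x)|^{v}\,dh\ge c\,|f(x)|^{v}t^{d}$, whence $\Vert(\int_{B(0,t)}|\Delta^{N}_{h}f|^{v}\,dh)^{1/v}\vert\mathcal{M}^{u}_{p}(\R)\Vert\ge c\,t^{d/v}\Vert f\vert\mathcal{M}^{u}_{p}(\R)\Vert$. With $s=0$ the weight $t^{-sq-dq/v}$ cancels this growth: the integrand is bounded below by $c^{q}\Vert f\vert\mathcal{M}^{u}_{p}(\R)\Vert^{q}t^{-sq}$, which at $s=0$ is the positive constant $c^{q}\Vert f\vert\mathcal{M}^{u}_{p}(\R)\Vert^{q}$, and $\int^{\infty}\frac{dt}{t}=\infty$.

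The main obstacle is this last computation: one must isolate the constant-in-$h$ contribution $f(x)$, bound it from below uniformly in $x$ (which is why $f$ is chosen with compact support, giving the clean threshold $|h|>2$), and verify that after integrating in $h$ and passing to the Morrey norm the growth $t^{d/v}$ exactly matches the decay $t^{-d/v}$ produced by the weight precisely at $s=0$, so that the $t$-tail fails to be integrable. A secondary point requiring care is the bookkeeping that splits (iv) into $q<\infty$ (tail argument) and $q=\infty$ (always a singular case), ensuring that no pair $(p,q)$ with $s=0$ and $a=\infty$ is missed; and in (i) the observation that a genuinely singular distribution is excluded from the function space $\mathbf{N}^{s,N,a}_{u,p,q,v}(\R)$ even when $\max(p,v)<1$.
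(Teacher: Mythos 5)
Your proposal is correct and takes essentially the same route as the paper: parts (i)--(iii) and the $q=\infty$ subcase of (iv) are handled by the singular distributions of Lemma \ref{l_bp2}, and the remaining subcase of (iv) by a compactly supported smooth $f$ for which only the $k=0$ term of $\Delta^{N}_{h}f$ survives for large $\vert h\vert$, so that at $s=0$ the weight exactly cancels the $t^{d/v}$ growth and the tail $\int^{\infty}\frac{dt}{t}$ diverges. The paper merely uses a slightly different test function ($f\equiv 1$ on $B(0,1)$, supported in $B(0,2)$) and extracts the lower bound by fixing the ball $B(0,1)$ in the Morrey supremum rather than invoking $\Vert f\vert \mathcal{M}^{u}_{p}(\R)\Vert>0$; these are cosmetic differences.
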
 

\begin{proof}
 
{\em Step 1.} At first we prove $ (i), (ii) $ and $ (iii)$. In each of this cases the spaces $ \mathcal{N}^{s}_{u,p,q}(\R) $ contain singular distributions, see lemma \ref{l_bp2}. So a characterization of $ \mathcal{N}^{s}_{u,p,q}(\R) $ in terms of differences is not possible.

{\em Step 2.} Now we look at the case $ s = 0 $ and $ a = \infty $. In the case $ q = \infty $ the spaces $ \mathcal{N}^{0}_{u,p,\infty}(\R) $ contain singular distributions, see lemma \ref{l_bp2}. So in what follows we can assume $ 0 < q < \infty $. We will use an idea from \cite{Ho}, see proposition 5.1. Let $ f \in C_{0}^{\infty}(\R) $ with $ f(x) = 1 $ for $ |x| \leq 1 $ and $ f(x) = 0 $ for $ |x| > 2 $. Then because of lemma \ref{l_bp1} we have $ f \in \mathcal{N}^{0}_{u,p,q}(\R)  $. But we are able to show that we have $ f \not \in   {\bf N}^{0, N, \infty}_{u, p, q, v}(\R)    $. At first instead of the supremum we choose the ball $ B(0,1) $. Then we get
\begin{align*}
\Vert f \vert \mathcal{N}^{0}_{u,p,q}(\R) \Vert^{(v, \infty)} & \geq C_{1} \Big ( \int_{0}^{ \infty} t^{-d \frac{q}{v}} \Big ( \int_{B(0,1)}  \Big ( \int_{B(0,t)}\vert \Delta^{N}_{h}f(x) \vert^{v} dh \Big )^{\frac{p}{v}} dx \Big )^{\frac{q}{p}}  \frac{dt}{t} \Big )^{\frac{1}{q}} \\
& \geq C_{1} \Big ( \int_{5}^{ \infty} t^{-d \frac{q}{v}} \Big ( \int_{B(0,1)}  \Big ( \int_{B(0,t) \setminus B(0,4) }\vert \Delta^{N}_{h}f(x) \vert^{v} dh \Big )^{\frac{p}{v}} dx \Big )^{\frac{q}{p}}  \frac{dt}{t} \Big )^{\frac{1}{q}}.
\end{align*}
Now we use formula \eqref{diff_form} that can be found in the proof of theorem \ref{thm_MR_va}. For $ x \in B(0,1) $ we have $ f(x) = 1 $. For $ x \in B(0,1) $, $ |h| \geq 4 $, $ k \geq 1 $ we get $ | kh + x | \geq | |kh| - |x|| \geq 3 $ and so $ f ( x + k h ) = 0   $. Hence we obtain $ \vert \Delta^{N}_{h}f(x) \vert = 1 $. Consequently we get
\begin{align*}
\Vert f \vert \mathcal{N}^{0}_{u,p,q}(\R) \Vert^{(v, \infty)} & \geq C_{1} \Big ( \int_{5}^{ \infty} t^{-d \frac{q}{v}} \Big ( \int_{B(0,1)}  \Big ( \int_{B(0,t) \setminus B(0,4) } 1 dh \Big )^{\frac{p}{v}} dx \Big )^{\frac{q}{p}}  \frac{dt}{t} \Big )^{\frac{1}{q}} \\
& \geq C_{2} \Big ( \int_{5}^{ \infty}   \frac{dt}{t} \Big )^{\frac{1}{q}} = \infty .
\end{align*}
Because of this we obtain $  \mathcal{N}^{0}_{u,p,q}(\R) \not =  {\bf N}^{0, N, \infty}_{u, p, q, v}(\R)  $ and the proof is complete.
\end{proof}

\subsection{Is the condition $ s > d  ( \frac{1}{p} - 1    )  $ necessary ? }

In this subsection we want to investigate whether the condition $ s > d  ( \frac{1}{p} - 1    )  $ that you can find in theorem \ref{thm_MR_va} is necessary. To give a satisfactory answer to this question seems to be not so easy. But some results are already known.

\begin{prop}\label{nec_p1MR1}
Let $ 0 < p \leq u < \infty $, $ 0 < q \leq \infty $, $ s \geq 0  $, $ 0 < v \leq \infty $ and $ 1 \leq a \leq \infty  $. We have $ N \in \mathbb{N} $ with $ N > s $. Let $ 0 < p < 1 $. Then the following assertions are true.

\begin{itemize}
\item[(i)]
Let $ s < d \frac{p}{u}  ( \frac{1}{p} - 1    ) $. Then we have $  \mathcal{N}^{s}_{u,p,q}(\R) \not =  {\bf N}^{s, N, a}_{u, p, q, v}(\R)   $. 

\item[(ii)]
Let $ s = d \frac{p}{u}  ( \frac{1}{p} - 1    ) $ and $ q > 1 $. Then we have $  \mathcal{N}^{s}_{u,p,q}(\R) \not =  {\bf N}^{s, N, a}_{u, p, q, v}(\R)    $.

\end{itemize}

\end{prop}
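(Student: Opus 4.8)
The plan is to reduce both parts to Lemma \ref{l_bp2}, exactly as in the proof of Proposition \ref{nec_0MR1}. The decisive structural fact is that, by Definition \ref{def_Dif_sp}, every element of $\mathbf{N}^{s,N,a}_{u,p,q,v}(\R)$ is a genuine function lying in $L_{\max(p,v)}^{loc}(\R)$: the quasi-norm $\Vert\cdot\vert\mathcal{N}^{s}_{u,p,q}(\R)\Vert^{(v,a)}$ is assembled from the pointwise differences $\Delta_h^N f$ and therefore only makes sense for functions. By contrast, Lemma \ref{l_bp2} describes precisely when $\mathcal{N}^{s}_{u,p,q}(\R)$ contains singular distributions. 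So it suffices to check that the hypotheses of (i) and (ii) force $\mathcal{N}^{s}_{u,p,q}(\R)\not\subset L_1^{loc}(\R)$.

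First I would match the thresholds. Since $0<p<1$ we have $\frac1p-1>0$, whence $\sigma_p=d(\frac1p-1)$ and $\frac pu\,\sigma_p=d\,\frac pu\big(\frac1p-1\big)$. Therefore the assumption $s<d\,\frac pu(\frac1p-1)$ in part (i) is literally the condition $s<\frac pu\sigma_p$ appearing in Lemma \ref{l_bp2}(i), while the assumption $s=d\,\frac pu(\frac1p-1)$ together with $0<p<1$ and $q>1$ in part (ii) is exactly the condition of Lemma \ref{l_bp2}(ii). The standing hypothesis $s\geq0$ is consistent, since $\frac pu\sigma_p>0$ for $p<1$, and neither $N$ nor $a$ enters the argument.

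With the thresholds matched, Lemma \ref{l_bp2} yields in each case a distribution $f\in\mathcal{N}^{s}_{u,p,q}(\R)$ with $f\notin L_1^{loc}(\R)$, i.e. $f$ is not represented by any locally integrable function. Such an $f$ cannot belong to $\mathbf{N}^{s,N,a}_{u,p,q,v}(\R)$: membership there would require $f$ to be given by a function in $L_{\max(p,v)}^{loc}(\R)$, which in turn would have to be locally integrable in order to reproduce the tempered distribution $f$ via integration against test functions, contradicting $f\notin L_1^{loc}(\R)$. Hence $f\in\mathcal{N}^{s}_{u,p,q}(\R)\setminus\mathbf{N}^{s,N,a}_{u,p,q,v}(\R)$, and consequently $\mathcal{N}^{s}_{u,p,q}(\R)\neq\mathbf{N}^{s,N,a}_{u,p,q,v}(\R)$ in both (i) and (ii).

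I do not expect a genuine analytic obstacle here, since all the substantive work is hidden in Lemma \ref{l_bp2}; the only points demanding care are the bookkeeping that the thresholds truly coincide and the observation that a singular distribution cannot be an element of a space of functions. Should one prefer a self-contained argument avoiding Lemma \ref{l_bp2}, the singular distribution could instead be produced directly as a lacunary atomic series through Lemma \ref{lem_atom} and shown to fail local integrability, but invoking Lemma \ref{l_bp2} is by far the shorter route.
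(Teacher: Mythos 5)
Your proposal is correct and coincides with the paper's own argument: the proof of Proposition \ref{nec_p1MR1} consists precisely of observing that under (i) and (ii) the space $\mathcal{N}^{s}_{u,p,q}(\R)$ contains singular distributions by Lemma \ref{l_bp2}, so it cannot equal a space of locally integrable functions defined via differences. Your additional bookkeeping (matching $d\,\frac{p}{u}(\frac{1}{p}-1)$ with $\frac{p}{u}\sigma_p$ for $0<p<1$, and noting that a singular distribution cannot lie in $\mathbf{N}^{s,N,a}_{u,p,q,v}(\R)$) simply makes explicit what the paper leaves implicit.
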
 

\begin{proof}
In both cases the spaces $ \mathcal{N}^{s}_{u,p,q}(\R) $ contain singular distributions, see lemma \ref{l_bp2}. So a characterization of $ \mathcal{N}^{s}_{u,p,q}(\R) $ in terms of differences is not possible.
\end{proof}

\begin{prop}\label{nec_p_emb}
Let $  s > 0  $, $ 0 < p < u < \infty $, $ 0 < q \leq \infty  $ and $ N \in \mathbb{N} $ with $ N > s $. Let $ 0 < p < 1  $ and 
\begin{align*}
d \frac{p}{u} \left ( \frac{1}{p} - 1   \right ) < s \leq d  \left ( \frac{1}{p} - 1   \right ).
\end{align*}
Then for all $ f \in \mathcal{N}^{s}_{u,p,q}(\R) $ there is a $ C > 0  $ independent of $ f $ such that
\begin{align*}
\Vert f \vert \mathcal{N}^{s}_{u,p,q}(\R)  \Vert  \leq C   \Vert f \vert \mathcal{N}^{s}_{u,p,q}(\R)  \Vert^{(1, \infty )}.
\end{align*}
\end{prop}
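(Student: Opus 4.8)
The plan is to prove the estimate by \emph{reconstructing} $f$ from local data and estimating its Littlewood--Paley blocks by hand. Since the hypothesis gives $s>d\frac pu(\frac1p-1)=\frac pu\sigma_p$ but allows $s\le\sigma_p$, we are below the range $s>\sigma_p$ in which the proof of Proposition \ref{pro_HNres1} (and hence Theorem \ref{thm_MR_va}) works, so none of those equivalences may be quoted. First I would isolate the one genuine consequence of the hypothesis, namely regularity: by Lemma \ref{l_bp2} the condition $s>\frac pu\sigma_p$ rules out all cases there, so $\mathcal N^{s}_{u,p,q}(\R)\subset L_1^{loc}(\R)$ and every $f$ is an ordinary locally integrable function with all $\Delta_h^Nf$ well defined. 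Reducing the target, it suffices to bound $\Vert f\vert\mathcal N^{s}_{u,p,q}(\R)\Vert$ by the dyadic quantity $\Vert f\vert\mathcal M^u_p(\R)\Vert^q+\sum_{\nu\ge1}2^{\nu q(s+d)}\Vert(\int_{B(0,2^{-\nu})}|\Delta_h^Nf|\,dh)\vert\mathcal M^u_p(\R)\Vert^q$ of Remark \ref{rem_HNrs1} with $v=1$, because the latter is $\lesssim\Vert f\vert\mathcal N^{s}_{u,p,q}(\R)\Vert^{(1,\infty)}$ by the monotonicity of $t\mapsto\int_{B(0,t)}|\Delta_h^Nf|\,dh$ used in Substeps 1.1 and 2.1 of the proof of Theorem \ref{thm_MR_va} (which do not invoke the equivalence).

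Next I would fix a compactly supported, smooth kernel reproducing polynomials of degree less than $N$, form the quasi-interpolants $Q_\nu f$ at scale $2^{-\nu}$, and set $g_0=Q_0f$, $g_\nu=Q_\nu f-Q_{\nu-1}f$ for $\nu\ge1$, exactly as in the constructions of Hedberg--Netrusov \cite{HN} and \cite{Ho}. Because $f\in L_1^{loc}(\R)$, one has $Q_\nu f\to f$ and $f=\sum_{\nu\ge0}g_\nu$ in $L_1^{loc}(\R)$ and in $\mathcal S'(\R)$; this is where the regularity secured above, i.e. the restriction $s>\frac pu\sigma_p$, is genuinely needed. A Whitney-type local best-approximation estimate together with the boundedness of an $r$-th order maximal operator on $\mathcal M^u_p(\R)$ for some $r<p$ (which follows from Lemma \ref{l_ineq1} on $\mathcal M^{u/r}_{p/r}(\R)$, where $p/r>1$) then yields $\Vert g_0\vert\mathcal M^u_p\Vert\lesssim\Vert f\vert\mathcal M^u_p\Vert$ and $\Vert g_\nu\vert\mathcal M^u_p\Vert\lesssim 2^{\nu d}\Vert(\int_{B(0,c2^{-\nu})}|\Delta_h^Nf|\,dh)\vert\mathcal M^u_p\Vert$, so that $2^{\nu s}\Vert g_\nu\vert\mathcal M^u_p\Vert$ is controlled by the $\nu$-th term of the dyadic quantity from the first step.

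It then remains to pass from $f=\sum_\nu g_\nu$ to the Littlewood--Paley norm. Writing $\mathcal F^{-1}[\varphi_k\mathcal Ff]=\sum_\nu\mathcal F^{-1}[\varphi_k\mathcal Fg_\nu]$, I would establish a block estimate $\Vert\mathcal F^{-1}[\varphi_k\mathcal Fg_\nu]\vert\mathcal M^u_p\Vert\lesssim 2^{-|k-\nu|\varepsilon}\Vert g_\nu\vert\mathcal M^u_p\Vert$ for a suitable $\varepsilon>0$: for $k>\nu$ the decay comes from the smoothness of the kernel (using $N>s$ and the multiplier bound of Lemma \ref{l_ineq2}), while for $k\le\nu$ it comes from the vanishing mean of $\varphi_k$ ($k\ge1$) combined with the scaling of the Morrey norm of a band-limited bump, the decay rate here being governed by the upper index $u$. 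Summing over $\nu$ by a convolution inequality in $\ell^q$, which converges because $s>0$, gives $\Vert f\vert\mathcal N^{s}_{u,p,q}\Vert\lesssim(\sum_\nu 2^{\nu sq}\Vert g_\nu\vert\mathcal M^u_p\Vert^q)^{1/q}$, and by the previous step the right-hand side is dominated by $\Vert f\vert\mathcal N^{s}_{u,p,q}\Vert^{(1,\infty)}$.

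The main obstacle is exactly the feature that the machinery behind Theorem \ref{thm_MR_va} circumvents. The blocks $g_\nu$ coming from quasi-interpolation carry no vanishing-moment conditions, so the atomic statement of Lemma \ref{lem_atom} is unavailable (it would demand $L\ge\sigma_p-s\ge0$ moments once $s\le\sigma_p$), and one cannot reduce, as in Proposition \ref{pro_HNres1}, to an exponent $r<p$ with $r>d/(s+d)$. Everything therefore rests on the hands-on block estimate above and, crucially, on the reconstruction $f=\sum_\nu g_\nu$ being valid, which is precisely why the statement is confined to the regular range $s>\frac pu\sigma_p$. Verifying the low-frequency block estimate with its $u$-dependent decay, and checking that the $\ell^q$-summation closes for every $s>0$ in this range, is where the real work lies; the delicate instance is $u<1$, where the low-frequency sum survives only because the hypothesis forces $s>\frac pu\sigma_p>\frac{d(1-u)}{u}$.
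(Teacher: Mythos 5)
Your overall architecture matches the paper's in outline: secure $f\in L_{1}^{loc}(\R)$ from $s>d\frac{p}{u}(\frac{1}{p}-1)$ via Lemma \ref{l_bp2} (the paper makes exactly this point, citing theorem 3.3 of \cite{HaMoSk}), build approximants at scale $2^{-\nu}$ whose telescoped differences are controlled by ball means of $\Delta^{N}_{h}f$, and resum into the Littlewood--Paley norm; your reduction to the dyadic quantity of Remark \ref{rem_HNrs1} with $v=1$ is also fine. But the proof the paper intends is the classical construction of Nikol'skij (chapter 5.2.1 of \cite{Ni}), in the form of step 2 of the proof of theorem 2.5.10 in \cite{Tr83} and of the proof of lemma 10 in \cite{Si}: there the kernel $k$ is chosen with \emph{compactly supported Fourier transform}, so that each approximant $T_{j}f(x)=\int k(y)\sum_{l=1}^{N}(-1)^{l+1}\binom{N}{l}f(x+l2^{-j}y)\,dy$ is an entire function of exponential type $\sim 2^{j}$, while $f-T_{j}f=(-1)^{N}\int k(y)\Delta^{N}_{2^{-j}y}f(\cdot)\,dy$ is still pointwise dominated by a ball mean of $N$-th differences. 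Consequently $\varphi_{k}\mathcal{F}(T_{j}f-T_{j-1}f)=0$ for $j<k-c$, the $k$-th Littlewood--Paley block only sees the terms $j\geq k-c$, and the resummation (via Lemma \ref{l_ineq2}) closes for every $s>0$. No low-frequency interaction ever arises; the only role of $s>\frac{p}{u}\sigma_{p}$ is the regularity you already identified.

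Your version replaces this by a quasi-interpolant whose kernel is compactly supported in physical space, and this is where a genuine gap opens. The blocks $g_{\nu}$ then have full Fourier support and carry no vanishing moments, and everything hinges on the low-frequency estimate $\Vert \mathcal{F}^{-1}[\varphi_{k}\mathcal{F}g_{\nu}]\,\vert\,\mathcal{M}^{u}_{p}(\R)\Vert\lesssim 2^{-(\nu-k)\varepsilon}\Vert g_{\nu}\,\vert\,\mathcal{M}^{u}_{p}(\R)\Vert$ for $k\leq\nu$, which you assert but do not prove. The scaling computation you appeal to is valid for a single bump, where the relevant exponent $d(\frac{1}{u}-1)$ is indeed below $\frac{p}{u}\sigma_{p}<s$; but $g_{\nu}$ is a superposition of order $2^{\nu d}$ bumps per unit volume with arbitrary coefficients, and a uniform moment-free estimate of this type is precisely a moment-free atomic/molecular decomposition in the range $\frac{p}{u}\sigma_{p}<s\leq\sigma_{p}$ --- something Lemma \ref{lem_atom} explicitly does not provide (it demands $L\geq\sigma_{p}-s\geq 0$ vanishing moments here) and something the paper nowhere claims. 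So the hardest step of your plan is exactly the statement you would have to invent from scratch, as your own closing paragraph concedes. The repair is not to push harder on that estimate but to change the kernel: with $\mathcal{F}k$ compactly supported the terms $k\leq\nu-c$ vanish identically, and your remaining steps (regularity, pointwise error bounds by ball means, the maximal inequality of Lemma \ref{l_ineq1}, and the $\ell_{q}$-resummation for $s>0$) go through essentially as you describe.
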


\begin{proof}
To prove this result we can use the techniques that are described in step 2 of the proof from theorem 2.5.10. in \cite{Tr83}. The main tool for the proof is a classical construction from approximation theory that can be found in \cite{Ni}, see chapter 5.2.1. Here we apply the version from \cite{Si}, see the proof of lemma 10. Notice that it is important that we have $ s > d \frac{p}{u}  ( \frac{1}{p} - 1   )    $ which guarantees $ f \in L_{1}^{loc}(\R) \cap \mathcal{S}'(\mathbb{R}^{d}) $, see theorem 3.3. in \cite{HaMoSk}.
\end{proof}

\subsection{Is the condition $ s > d  ( \frac{1}{p} - \frac{1}{v}    )  $ necessary ? }

In this subsection we want to investigate whether the condition $ s > d  ( \frac{1}{p} - \frac{1}{v}   )  $ that you can find in theorem \ref{thm_MR_va} is necessary. The following result is not optimal but tells us that the condition $ s > d  ( \frac{1}{p} - \frac{1}{v}    )  $ is at least partly necessary. 

\begin{prop}\label{nec_pvMR1}
Let $ s \geq 0 $, $ 0 < p \leq u < \infty $ and $ 0 < q \leq \infty $. Let $ 0 < p < v < \infty $, $ 1 \leq a \leq \infty  $ and $ N \in \mathbb{N} $ with $ N > s $. We have $ s < d \frac{p}{u}  ( \frac{1}{p} - \frac{1}{v}   )$. Then we have $  \mathcal{N}^{s}_{u,p,q}(\R) \not =  {\bf N}^{s, N, a}_{u, p, q, v}(\R)   $. 
\end{prop}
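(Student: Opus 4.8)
The plan is to deduce the inequality of the two spaces from the embedding obstruction recorded in Lemma \ref{l_bmD1}. First I would reduce the statement to showing that $\mathcal{N}^{s}_{u,p,q}(\R) \not\subset L_{v}^{loc}(\R)$. Indeed, by Definition \ref{def_Dif_sp} every element of ${\bf N}^{s, N, a}_{u, p, q, v}(\R)$ lies in $L_{\max(p,v)}^{loc}(\R) = L_{v}^{loc}(\R)$, the last equality holding because $p < v$. Hence, as soon as there is some $f \in \mathcal{N}^{s}_{u,p,q}(\R)$ which is not represented by a function in $L_{v}^{loc}(\R)$, we have $f \notin {\bf N}^{s, N, a}_{u, p, q, v}(\R)$, and the two spaces differ. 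So it suffices to prove $\mathcal{N}^{s}_{u,p,q}(\R) \not\subset L_{v}^{loc}(\R)$.

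For $v \geq 1$ I would argue by contradiction. Assume $\mathcal{N}^{s}_{u,p,q}(\R) = {\bf N}^{s, N, a}_{u, p, q, v}(\R)$; then $\mathcal{N}^{s}_{u,p,q}(\R) \subset L_{v}^{loc}(\R)$. Fix a bounded $C^{\infty}$ domain $\Omega \subset \R$. For $g \in \mathcal{N}^{s}_{u,p,q}(\R)$ the restriction $g|_{\Omega}$ lies in $L_{v}(\Omega)$ since $\Omega$ is bounded, so by Definition \ref{def_bmD} we obtain the set inclusion $\mathcal{N}^{s}_{u,p,q}(\Omega) \subset L_{v}(\Omega)$. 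Both spaces are complete and metrizable (Lemma \ref{l_bp1}(ii) together with the standard restriction argument gives that $\mathcal{N}^{s}_{u,p,q}(\Omega)$ is a quasi-Banach space), and the identity inclusion has closed graph, because convergence in either space implies convergence in $\mathcal{D}'(\Omega)$. The closed graph theorem then upgrades the inclusion to a continuous embedding $\mathcal{N}^{s}_{u,p,q}(\Omega) \hookrightarrow L_{v}(\Omega)$. Now Lemma \ref{l_bmD1} forces $s \geq d\frac{p}{u}(\frac{1}{p} - \frac{1}{v})$, contradicting the hypothesis $s < d\frac{p}{u}(\frac{1}{p} - \frac{1}{v})$. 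This settles the case $v \geq 1$.

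The main obstacle is the range $0 < v < 1$, where Lemma \ref{l_bmD1} is unavailable, as it requires $1 \leq v$. Here I would instead construct an explicit witness $f \in \mathcal{N}^{s}_{u,p,q}(\R) \setminus L_{v}^{loc}(\R)$ via the atomic characterization of Lemma \ref{lem_atom}. The idea is to superpose $(K,L)$-atoms $a_{j,k}$ at dyadic scales $2^{-j}$ concentrated near the origin, with coefficients $\lambda_{j,k}$ tuned to the Morrey scaling exponent $d/u$ so that the sequence $\lambda = \{\lambda_{j,k}\}$ has finite ${\bf n}^{s}_{u,p,q}(\R)$-norm while the resulting function is so singular that $\int_{B(0,\varepsilon)} |f|^{v}\,dx = \infty$. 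A direct computation shows that a single column $k=0$ contributes to the sequence norm like $2^{j(s - d/u)}|\lambda_{j,0}|$, so that coefficients of size $2^{j(d/u - s)}$ times a factor just failing $q$-summability keep the norm finite; the delicacy is twofold. To realise the exact threshold $d\frac{p}{u}(\frac{1}{p}-\frac{1}{v})$ rather than the weaker value $d(\frac{1}{u}-\frac{1}{v})$ produced by one bump per scale, one must spread the atoms over a region whose size is adapted to the Morrey exponent $u$; and when $s < \sigma_{p}$ (which can occur for $p<1$, hence whenever $v<1$) the moment conditions $L \geq \sigma_{p} - s$ force the atoms to change sign, so the lower bound for $|f|$ near the singularity has to be extracted with care. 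This construction, which is essentially the extremal example underlying Lemma \ref{l_bmD1}, is where the real work lies.
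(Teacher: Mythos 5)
Your reduction and your argument for $v \geq 1$ are sound and essentially match the paper's Step 2: the paper likewise argues by contradiction, uses the Closed Graph Theorem to upgrade a set inclusion into a continuous embedding $\mathcal{N}^{s}_{u,p,q}(B(0,\tfrac{1}{8N})) \hookrightarrow L_{v}(B(0,\tfrac{1}{8N}))$, and then invokes Lemma \ref{l_bmD1} to reach the contradiction $s \geq d\frac{p}{u}(\frac{1}{p}-\frac{1}{v})$. Your version is in fact somewhat more direct, since you apply the closed graph argument straight to the inclusion $\mathcal{N}^{s}_{u,p,q}(\Omega) \subset L_{v}(\Omega)$ and never need the quantitative intermediate steps the paper carries out (a Strichartz-type lower bound of the difference quasi-norm by $\Vert f \vert L_{v}(\R)\Vert$ for compactly supported $f$, followed by the pointwise multiplier Lemma \ref{l_bmD2}); the difference characterization itself plays no role in your route, only the fact that members of ${\bf N}^{s,N,a}_{u,p,q,v}(\R)$ are $L_{v}^{loc}$-functions.

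The genuine gap is the case $0 < v < 1$. The atomic construction you outline there is not carried out: you yourself identify the two delicate points (reaching the Morrey threshold $d\frac{p}{u}(\frac{1}{p}-\frac{1}{v})$ rather than $d(\frac{1}{u}-\frac{1}{v})$, and extracting a lower bound on $|f|$ from atoms that must change sign because of the moment conditions) and leave both unresolved, so this half of the proposition remains unproved as written. More importantly, you have overlooked that this case requires no construction at all. If $0 < v \leq 1$ then $\frac{1}{v} \geq 1$, so the hypothesis $s < d\frac{p}{u}\bigl(\frac{1}{p}-\frac{1}{v}\bigr)$ forces $s < d\frac{p}{u}\bigl(\frac{1}{p}-1\bigr) = \frac{p}{u}\sigma_{p}$, and $p < v \leq 1$ gives $0 < p < 1$. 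By Lemma \ref{l_bp2}(i) the space $\mathcal{N}^{s}_{u,p,q}(\R)$ then contains singular distributions, which already rules out equality with ${\bf N}^{s,N,a}_{u,p,q,v}(\R)$ exactly as in Proposition \ref{nec_p1MR1}. This is precisely how the paper disposes of $0 < v \leq 1$ in its Step 1, in one line.
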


\begin{proof}
{\em Step 1.} At first we look at the case $ 0 < v \leq 1$. Here $ s < d \frac{p}{u}  ( \frac{1}{p} - \frac{1}{v}    ) $ implies $ s < d \frac{p}{u}  ( \frac{1}{p} - 1    ) $. But from proposition \ref{nec_p1MR1} we know that in this case it is not possible to describe the spaces $ \mathcal{N}^{s}_{u,p,q}(\R)   $ in terms of differences. 

{\em Step 2.} Now we work with the case $ 1 < v < \infty $. We will use some ideas from \cite{HoSi}, see proposition 5.2.  Let $ s < d \frac{p}{u}  ( \frac{1}{p} - \frac{1}{v}    )  $. We will argue by contradiction. Our first assumption is 
$  \mathcal{N}^{s}_{u,p,q}(\R)  =  {\bf N}^{s, N, a}_{u, p, q, v}(\R) $ as sets. Then $ \mathcal{N}^{s}_{u,p,q}(\R)  $ can not contain singular distributions and $ \mathcal{N}^{s}_{u,p,q}(\R)  \subset L_1^{loc} (\R)$ follows. Our second assumption is a sharpening of the first one. We assume that the identity  
$Id: {\bf N}^{s, N, a}_{u, p, q, v}( B( 0 , \frac{1}{8N} ) )  \to \mathcal{N}^{s}_{u,p,q}( B( 0 , \frac{1}{8N} ) )$ is a continuous operator. Here $ {\bf N}^{s, N, a}_{u, p, q, v}( B( 0 , \frac{1}{8N} ) )$ is defined to be the set of all $f\in {\bf N}^{s, N, a}_{u, p, q, v}(\R)$ satisfying $\supp f \subset B( 0 , \frac{1}{8N})$.
We will first disprove assumption two, afterwards assumption one.   

{\em Substep 2.1.}  
We use $ f \in \mathcal{N}^{s}_{u,p,q}(\R) $ with $ \supp f \subset B(0, \frac{1}{4N}) $. We will prove that there is a $ C > 0 $ independent of $ f $ such that
\begin{equation}\label{prof_pv1}
\Vert f \vert L_{v}(\R) \Vert \leq C \Vert f \vert \mathcal{N}^{s}_{u,p,q}(\R) \Vert .
\end{equation}
Because of our assumption we can start with
\begin{align*}
\Vert f \vert \mathcal{N}^{s}_{u,p,q}(\R) \Vert & \geq C_{1} \Big ( \int_{0}^{a} t^{-sq-d \frac{q}{v}} \Big \Vert  \Big ( \int_{B(0,t)}\vert \Delta^{N}_{h}f(x) \vert^{v} dh \Big )^{\frac{1}{v}} \Big \vert  \mathcal{M}^{u}_{p}( \mathbb{R}^d) \Big \Vert^{q}  \frac{dt}{t} \Big )^{\frac{1}{q}}.
\end{align*}
Next instead of the supremum we choose the ball $ B(0,\frac{N + 1}{4})  $. Then we obtain
\begin{align*}
\Vert f \vert \mathcal{N}^{s}_{u,p,q}(\R) \Vert & \geq C_{2} \Big ( \int_{0}^{1} t^{-sq-d \frac{q}{v}} \Big ( \int_{B(0,\frac{N + 1}{4})}  \Big ( \int_{B(0,t)}\vert \Delta^{N}_{h}f(x) \vert^{v} dh \Big )^{\frac{p}{v}} dx \Big )^{\frac{q}{p}}  \frac{dt}{t} \Big )^{\frac{1}{q}}.
\end{align*}
Now we are exactly in the same situation as it is described in step 1 of the proof from proposition 5.2 in \cite{HoSi}. So we can proceed like there and get the desired result.

{\em Substep 2.2.} In this substep we will work with function spaces on smooth and bounded domains. They have been introduced in definition \ref{def_bmD}. As domain we choose the ball $ B( 0 , \frac{1}{8N} )  $. We want to prove that we have the continuous embedding 
\begin{equation}\label{prof_pv2}
\mathcal{N}^{s}_{u,p,q} \Big ( B( 0 , \frac{1}{8N} ) \Big ) \hookrightarrow L_{v} \Big ( B( 0 , \frac{1}{8N} ) \Big ) .
\end{equation}
To prove this we use the methods that are described in step 2 of the proof from proposition 5.2 in \cite{HoSi}. Only some minor modifications have to be made. Our main tools for the proof are formula \eqref{prof_pv1} and lemma \ref{l_bmD2}. With them it is easy to get the desired result. We omit the details. 

{\em Substep 2.3.} Next we want to use lemma \ref{l_bmD1}. From the substep before we know $ \mathcal{N}^{s}_{u,p,q}  ( B( 0 , \frac{1}{8N} )  ) \hookrightarrow L_{v}  ( B( 0 , \frac{1}{8N} ) ) $. Then because of $  p < v $ lemma \ref{l_bmD1} tells us that we have
\begin{align*}
s \geq   d  \frac{p}{u} \left ( \frac{1}{p} - \frac{1}{v}    \right ).
\end{align*}
But at the beginning of step 2 we said that we have $ s < d \frac{p}{u}  ( \frac{1}{p} - \frac{1}{v}   )  $. This is a contradiction. So our assumption on the continuity of the identity must have been wrong.

Let $(f_j)_j$ be a convergent sequence in ${\bf N}^{s, N, a}_{u, p, q, v}( B( 0 , \frac{1}{8N} ) )$
with limit $ f \in {\bf N}^{s, N, a}_{u, p, q, v}( B( 0 , \frac{1}{8N}))$. In addition we assume $\lim_{j \to \infty} f_j = g$ in 
$ \mathcal{N}^{s}_{u,p,q} ( B( 0 , \frac{1}{8N} ) )$. The first fact implies  convergence in $L_p (\R)$, see theorem \ref{thm_MR_va} and definition \ref{def_Dif_sp}. This yields convergence almost everywhere for an appropriate subsequence $(f_{j_\ell})_{\ell}$. 
The second assumption applied to this subsequence yields the existence of extensions $h_{j_\ell}$ of $f_{j_\ell} - g$ such that
\[
\lim_{\ell \to \infty} \|\,  h_{j_\ell}\,| \mathcal{N}^{s}_{u,p,q}(\R)\|=0\, .
\]
Without loss of generality we may assume $\supp h_{j_\ell} \subset [-1,1]^d$. Recall that we have assumed $ \mathcal{N}^{s}_{u,p,q}(\R)  =  {\bf N}^{s, N, a}_{u, p, q, v}(\R)  $. Then $ \mathcal{N}^{s}_{u,p,q}(\R)  $ can not contain singular distributions and $ \mathcal{N}^{s}_{u,p,q}(\R)  \subset L_1^{loc} (\R)$ follows. But then we also find $ \mathcal{N}^{s}_{u,p,q} ( B( 0 , \frac{1}{8N} ) ) \hookrightarrow L_{1}( B( 0 , \frac{1}{8N} ) )  $. Now because of $ \lim_{j \to \infty} f_j = g$ in 
$ \mathcal{N}^{s}_{u,p,q} ( B( 0 , \frac{1}{8N} ) )$ we obtain
\[
\lim_{\ell \to \infty}\,  \| \, f_{j_\ell} - g \, |L_1 (B(0,\frac{1}{8N}))\| =0.
\]
By switching to a further subsequence if necessary we conclude $f=g$ almost everywhere. So we have proved that the identity 
$Id: {\bf N}^{s, N, a}_{u, p, q, v}( B( 0 , \frac{1}{8N} ) )  \to \mathcal{N}^{s}_{u,p,q}( B( 0 , \frac{1}{8N} ) )$ is a closed linear operator. The Closed Graph Theorem, which remains to hold for quasi-Banach spaces, yields 
that $Id$ must be continuous. But this contradicts our previous conclusion.
Therefore also our assumption concerning the equality of the sets must be wrong.
This proves 
$  \mathcal{N}^{s}_{u,p,q}(\R) \not =  {\bf N}^{s, N, a}_{u, p, q, v}(\R) $ as claimed.
\end{proof} 

\begin{rem}\label{rem_v_ein}
Let $ 0 < p \leq u < \infty $, $ s > 0 $, $ 0 < q \leq \infty  $, $ 1 \leq a \leq \infty  $ and $ N \in \mathbb{N} $ with $ N > s  $.  Let $ 0 < v_{1} < v_{2} < \infty  $. Then we have
\begin{align*}
{\bf N}^{s, N, a}_{u, p, q, \infty}(\R) \hookrightarrow {\bf N}^{s, N, a}_{u, p, q, v_{2}}(\R) \hookrightarrow {\bf N}^{s, N, a}_{u, p, q, v_{1}}(\R).
\end{align*}
So for $ v > \max(1 , p )   $ and $  d \max  ( 0 , \frac{1}{p} - 1     ) < s \leq d  (  \frac{1}{p} - \frac{1}{v}     ) $ because of theorem \ref{thm_MR_va} we have
\begin{align*}
{\bf N}^{s, N, a}_{u, p, q, v}(\R) \hookrightarrow {\bf N}^{s, N, a}_{u, p, q, 1}(\R) = \mathcal{N}^{s}_{u,p,q}(\R).
\end{align*}
\end{rem}

\subsection{The necessity of $ N > s $ }

In this subsection we will prove that it is not possible to describe the Besov-Morrey spaces $ \mathcal{N}^{s}_{u,p,q}(\R) $ with differences of order $ N \in \mathbb{N} $ if we have $ N \leq s $. The following proposition is the main result of this subchapter. 

\begin{prop}\label{nec_N>sMR1}
Let $ 0 < p \leq u < \infty $, $ s \geq 0 $ and $ 0 < q \leq \infty  $. Let $ 0 < v \leq \infty $, $ 1 \leq a \leq \infty  $ and $ N \in \mathbb{N} $ with $ N \leq s  $. Then we have $  \mathcal{N}^{s}_{u,p,q}(\R) \not =  {\bf N}^{s, N, a}_{u, p, q, v}(\R)   $ if we are in one of the following cases.

\begin{itemize}
\item[(i)]
We have $ N < s $ and $ 0 < q \leq \infty $.

\item[(ii)]
We have $ N = s $ and $ 0 < q < \infty $.

\item[(iii)]
We have $ N = s $ with $ q = \infty  $ and $ u = p  $. Moreover we have $ v \geq 1  $.

\end{itemize}

\end{prop}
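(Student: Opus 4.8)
The plan is to establish $\mathcal{N}^{s}_{u,p,q}(\R)\neq{\bf N}^{s,N,a}_{u,p,q,v}(\R)$ in each case by producing one function that lies in exactly one of the two sets. In cases (i) and (ii) I would exhibit an $f\in\mathcal{N}^{s}_{u,p,q}(\R)$ whose difference quasi-norm $\Vert f\vert\mathcal{N}^{s}_{u,p,q}(\R)\Vert^{(v,a)}$ is infinite; then $f\notin{\bf N}^{s,N,a}_{u,p,q,v}(\R)$ while $f\in\mathcal{N}^{s}_{u,p,q}(\R)$, which already forces the two sets to differ.

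For (i) and (ii) the right test function is a Schwartz function agreeing with the monomial $x_{1}^{N}$ near the origin: fix $f\in\mathcal{S}(\R)$ with $f(x)=x_{1}^{N}$ on $B(0,r_{0})$. By Lemma \ref{l_bp1} (iv) we get $f\in\mathcal{N}^{s}_{u,p,q}(\R)$ for free. The key algebraic fact is $\Delta^{N}_{h}(x_{1}^{N})=N!\,h_{1}^{N}$; hence by \eqref{diff_form}, whenever $x\in B(0,r_{0}/2)$ and $|h|<r_{0}/(2N)$, all translates $x+kh$ with $0\le k\le N$ stay in $B(0,r_{0})$ and $\Delta^{N}_{h}f(x)=N!\,h_{1}^{N}$ is independent of $x$. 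A scaling $h=t\eta$ gives $(\int_{B(0,t)}|\Delta^{N}_{h}f(x)|^{v}\,dh)^{1/v}\approx t^{N+d/v}$ on $B(0,r_{0}/2)$ for small $t$, and bounding the Morrey norm below by one fixed ball inside $B(0,r_{0}/2)$ yields $\Vert(\int_{B(0,t)}|\Delta^{N}_{h}f|^{v}dh)^{1/v}\vert\mathcal{M}^{u}_{p}(\mathbb{R}^d)\Vert\gtrsim t^{N+d/v}$. Substituting this into $\Vert f\vert\mathcal{N}^{s}_{u,p,q}(\R)\Vert^{(v,a)}$ leaves the factor $\int_{0}^{t_{0}}t^{(N-s)q}\,\frac{dt}{t}$ for $q<\infty$, and $\sup_{0<t<t_{0}}t^{N-s}$ for $q=\infty$. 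The former diverges exactly for $N\le s$, the latter for $N<s$, which settles (i) for all $q$ and (ii) for $q<\infty$ (the case $v=\infty$ is the same with $d/v=0$).

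Case (iii), namely $N=s$, $q=\infty$, $u=p$ and $v\ge1$, is the \emph{main obstacle}: there the monomial only gives $\sup_{t}t^{N-s}=1<\infty$, so a strictly finer example is needed. Since $u=p$ forces $\mathcal{M}^{p}_{p}(\mathbb{R}^d)=L_{p}(\R)$ and $\mathcal{N}^{s}_{p,p,\infty}(\R)=B^{s}_{p,\infty}(\R)$ by Lemma \ref{l_bp1} (v), the problem reduces to the classical Besov space, and I would construct a Zygmund-type function in $B^{N}_{p,\infty}(\R)$ whose order-$N$ difference modulus fails the critical estimate. A convenient candidate is a localized lacunary series such as $f(x)=\rho(x)\sum_{k\ge1}2^{-kN}\cos(2^{k}x_{1})$ with $\rho\in C^{\infty}_{0}(\R)$. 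Using the Schwartz decay of $\widehat{\rho}$ one checks that the dyadic block at frequency $2^{k}$ has $L_{p}$-norm $\sim2^{-kN}$, so $\sup_{k}2^{kN}\Vert\mathcal{F}^{-1}[\varphi_{k}\mathcal{F}f]\vert L_{p}(\R)\Vert<\infty$ and $f\in B^{N}_{p,\infty}(\R)=\mathcal{N}^{s}_{u,p,q}(\R)$. On the other hand, for $|h|\sim2^{-m}$ in the $e_{1}$-direction the $m$ lowest frequencies each contribute an amount $\sim2^{-mN}$ to $\Delta^{N}_{h}f$ with coherent phases near $x_{1}=0$, so $|\Delta^{N}_{h}f(x)|\gtrsim m\,2^{-mN}$ on a set of comparable measure; after the $v$-average this gives $t^{-N-d/v}\Vert(\int_{B(0,t)}|\Delta^{N}_{h}f|^{v}dh)^{1/v}\vert L_{p}(\R)\Vert\gtrsim m\to\infty$ along $t=2^{-m}$, whence $\Vert f\vert\mathcal{N}^{s}_{u,p,q}(\R)\Vert^{(v,\infty)}=\infty$.

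The technical heart of (iii) is the non-cancellation lower bound $|\Delta^{N}_{h}f|\gtrsim m\,2^{-mN}$: one must dominate the aligned contributions of the low frequencies over the tail $k>m$ and over the variation of the cut-off $\rho$ (the multiplier step being controlled by Lemma \ref{l_bmD2}), and then verify that the favourable set of directions $h$ has measure comparable to $|B(0,2^{-m})|$ so that the $L_{v}$-average in $h$ survives. The hypotheses $u=p$ and $v\ge1$ are precisely what make this borderline case tractable, through the reduction to $L_{p}$ and a well-behaved $v$-average; relaxing them is what leaves the undecided region $C$ of Figure 1 open.
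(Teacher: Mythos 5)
Your treatment of cases (i) and (ii) is correct and is essentially the paper's argument: the paper uses a compactly supported function equal to $e^{x_1+\cdots+x_d}$ near the origin, so that $|\Delta^N_hf|\sim |h|^N$ for $h$ in the positive orthant, while you use $x_1^N$, so that $\Delta^N_hf=N!\,h_1^N$ exactly; both reduce the matter to the divergence of $\int_0 t^{(N-s)q}\,\frac{dt}{t}$, respectively of $\sup_{0<t<t_0}t^{N-s}$.

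In case (iii) you take a genuinely different route, and this is where there is a gap. The estimate you yourself call the technical heart, namely $|\Delta^N_hf(x)|\gtrsim m\,2^{-mN}$ ``on a set of comparable measure'', cannot hold on a set whose measure is bounded below independently of $m$. For $|h|\sim 2^{-m}$ the $k$-th low frequency contributes approximately $(i2^{k-m})^N2^{-kN}e^{i2^kx_1}=i^N2^{-mN}e^{i2^kx_1}$, and these $m$ terms are simultaneously aligned only where $|2^kx_1|\lesssim 1$ for every $k\le m$, i.e.\ on a strip of width $\sim 2^{-m}$. On that strip you do get $m\,2^{-mN}$, but then $\Vert \Delta^N_hf\,\vert L_p\Vert\gtrsim 2^{-m/p}\,m\,2^{-mN}$ and $t^{-N}\Vert \Delta^N_hf\,\vert L_p\Vert\gtrsim 2^{-m/p}\,m\to 0$, so the divergence is lost. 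On a set of fixed positive measure the correct size is governed by the $L_p$--$\ell_2$ equivalence for lacunary series (Zygmund), which yields $\Vert \Delta^N_hf\,\vert L_p\Vert\sim\big(\sum_{k\le m}2^{-2mN}\big)^{1/2}=\sqrt{m}\,2^{-mN}$. Since $\sqrt{m}\to\infty$ your conclusion survives with this weaker bound, but you must actually invoke that equivalence (valid for all $0<p<\infty$) and check its stability under multiplication by the cut-off $\rho$ and under the $v$-average in $h$; none of this is carried out. The paper avoids all phase and cancellation issues by using Oswald's construction: a sum $\sum_k a_k\phi(2^{n_k}x-x_k)$ of rescaled bumps with pairwise disjoint supports (also after the shifts by $kh$ occurring in $\Delta^N_h$), so that the $p$-th powers of the $L_p$-norms add exactly and the lower bound is a clean sum of $l-6$ equal terms, giving $\gtrsim(l-6)^{1/p}$; membership in $\mathcal{N}^{N}_{u,p,\infty}(\R)$ is then read off from the atomic decomposition (Lemma \ref{lem_atom}) rather than from dyadic blocks. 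The hypothesis $u=p$ enters there through the weights $2^{n_kd(p/u-1)}$, which is consistent with your reduction to $B^{N}_{p,\infty}(\R)$.
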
  

\begin{proof}

{\em Step 1.}
At first we have either $ N < s $ and $ 0 < q \leq \infty $ or $ N = s $ and $ 0 < q < \infty $. We will use some ideas from the proof of proposition 5.5 in \cite{HoSi}. We work with a function $ f \in C_{0}^{\infty}(\R) $ that has a support in $ B(0, 3N + 3) $. In $ B(0, 2N + 2 ) $ this function looks like
\begin{equation}\label{prof_N>s1}
f(x_{1}, x_{2}, \ldots , x_{d}) = e^{x_{1} + x_{2} + x_{3} + \ldots + x_{d}}.
\end{equation} 
Then because of lemma \ref{l_bp1} we have $ f \in \mathcal{N}^{s}_{u,p,q}(\R)  $. We want to prove that we have $ \Vert f \vert \mathcal{N}^{s}_{u,p,q}(\R) \Vert^{( v , a )} = \infty  $. Let $ 0 < \varepsilon < 1 $. We define 
\begin{align*}
H^{d}_{+} = \lbrace h = ( h_{1} , h_{2} , \ldots , h_{d} ) \in \R \; : \; h_{1} \geq 0 ,  h_{2} \geq 0 , \ldots , h_{d} \geq 0 \rbrace.
\end{align*}
Then we obtain
\begin{align*}
&\Vert f \vert \mathcal{N}^{s}_{u,p,q}(\R) \Vert^{( v , a )} \\
& \qquad \geq C_{1}  \Big ( \int_{ \varepsilon }^{1} t^{-sq-d \frac{q}{v}} \Big ( \int_{B(0,1)}  \Big ( \int_{( B(0,t)\setminus B(0, \frac{t}{2}) ) \cap H^{d}_{+}}\vert \Delta^{N}_{h}f(x) \vert^{v} dh \Big )^{\frac{p}{v}} dx  \Big )^{\frac{q}{p}}  \frac{dt}{t} \Big )^{\frac{1}{q}} .
\end{align*}
Now we are exactly in the same situation as it is described in the proof of proposition 5.5 in \cite{HoSi}. So we can proceed like there. At the end we obtain that $ \Vert f \vert \mathcal{N}^{s}_{u,p,q}(\R) \Vert^{( v , a )} $ tends to infinity if $ \varepsilon $ tends to zero. So step one of this proof is complete. 

{\em Step 2.} Now we will deal with the case $ s = N $ and $ q = \infty $ with $ p = u  $ and $ v \geq 1  $. Here we will use some ideas from Peter Oswald, see \cite{Os}. We fix $ r \in \mathbb{N} $ with $ r > 4 $ such that $ 2^{r + 1} \geq N + 4  $. Let $ \phi \in C_{0}^{\infty}(\R)  $ be a function with $ \supp \phi \subset B(0,1) \cap [0,1)^{d} $ such that $  \phi ( \cdot - 32^{r - 2} ( 1, 1, \ldots, 1)^{T}  )   $ fulfills moment conditions up to order $ L \in \mathbb{N}_{0} \cup \left\{ -1 \right\} $ with $ L \geq \max( - 1 , \sigma_{p} - N )   $. Moreover there is a set $ D \subset \supp \phi $ with $ \vert D \vert > \frac{\vert \supp \phi \vert}{2} $ on that for all $ x \in D  $ and $ \vert \gamma \vert \leq N  $ we have $ \vert D^{\gamma} \phi (x) \vert > C > 0   $. There is a set $ \tilde{D} \subset D  $ such that for all $ x \in \partial \tilde{D}  $ we have $ 2^{-10} > \dist( x , \partial D  ) > 2^{-20} $.  For $ k \in \mathbb{N}  $ we put $ n_{k} =  r ( k - 1 ) + 2  $ and $ x_{k} = 32^{r - 2} ( 1, 1, \ldots, 1)^{T} $. Moreover we put $  a_{k} = 2^{n_{k}( \frac{d}{u}- N)} $. We define the function
\begin{equation}\label{Os_func}
f(x) = \sum_{k = 1}^{\infty} a_{k} \phi ( 2^{n_{k}}x - x_{k}   ).
\end{equation}
Then for $ k \in \mathbb{N}  $ we have $  \supp  \phi ( 2^{n_{k}} \cdot - x_{k}   ) \subset B( 2^{- n_{k}} x_{k} ,   2^{- n_{k}}  )  \cap Q_{n_{k} , x_{k} }  $. Because of this we have $ \supp f \subset B( 0 , 4 \sqrt{d} \cdot 32^{r - 2}  +  4  )  $. For $ k, t \in \mathbb{N}  $ with $ k \not = t  $ we have
\begin{equation}\label{Os_sup1}
\supp \phi ( 2^{n_{k}} \cdot - x_{k}   ) \cap \supp \phi ( 2^{n_{t}} \cdot - x_{t}   ) = \emptyset .
\end{equation}
Moreover if we fix a large $ l \in \mathbb{N}  $ and $ h \in \R $ with $\vert h \vert \leq 2^{-n_{l + 1}} 2^{-rl}   $ for $ k, t \in \mathbb{N}  $ with $ 0 < k < t < l - 4  $ we have 
\begin{equation}\label{Os_sup2}
\supp \phi ( 2^{n_{k}} \cdot + N 2^{n_{k}} h  - x_{k}   ) \cap \supp \phi ( 2^{n_{t}} \cdot + N 2^{n_{t}} h - x_{t}   ) = \emptyset .
\end{equation}
Now we want to prove that we have $ \Vert f \vert \mathcal{N}^{N}_{u,p, \infty}(\R)  \Vert < \infty   $. For that purpose we want to use lemma \ref{lem_atom}. We already know that for $ k \in \mathbb{N} $ we have $ \supp  \phi ( 2^{n_{k}} \cdot - x_{k}   ) \subset  Q_{n_{k} , x_{k} }  $. For $ \vert \alpha \vert \leq  K \in \mathbb{N} $ with $ K > N + 1 $ because of $ \phi \in C_{0}^{\infty}(\R)  $ we have $ \Vert D^{\alpha}\phi ( 2^{n_{k}} \cdot - x_{k}   ) \vert L_{\infty}(\R) \Vert \leq C_{1} 2^{n_{k} \vert \alpha \vert } $. For $ \vert \beta \vert \leq L \in \mathbb{N}_{0} \cup \left\{ -1 \right\}   $ with $  L \geq \max( - 1 , \sigma_{p} - N )   $ we have 
\begin{align*}
\int_{\R} x^{\beta} \phi ( 2^{n_{k}} x - x_{k}   ) dx = 2^{- n_{k} \vert \beta \vert } 2^{-n_{k}d} \int_{\R} x^{\beta} \phi (  x - x_{k}   ) dx = 0  .
\end{align*}
So lemma \ref{lem_atom} can be applied and we obtain 
\begin{align*}
\Vert f \vert \mathcal{N}^{N}_{u,p, \infty}(\R)  \Vert & = \Big \Vert \sum_{k = 1}^{\infty} a_{k} \phi ( 2^{n_{k}}x - x_{k}   ) \Big \vert \mathcal{N}^{N}_{u,p, \infty}(\R) \Big  \Vert \\
& \leq C_{1} \sup_{k \in \mathbb{N} } 2^{n_{k}( N - \frac{d}{u}  )} \Big   \Vert  \vert a_{k} \vert \chi_{n_{k},x_{k}}^{(u)}  \Big \vert \mathcal{M}^{u}_{p}(\R) \Big  \Vert  \\
& = C_{1} \sup_{k \in \mathbb{N} }  \Big   \Vert  \chi_{n_{k},x_{k}}^{(u)}  \Big \vert \mathcal{M}^{u}_{p}(\R) \Big  \Vert .
\end{align*}
Now we use $ \Vert  \chi_{n_{k},x_{k}}^{(u)}  \vert \mathcal{M}^{u}_{p}(\R)   \Vert = 1  $, see the remark after definition 2.9 in \cite{HaMoSk}. We get
\begin{align*}
\Vert f \vert \mathcal{N}^{N}_{u,p, \infty}(\R)  \Vert \leq C_{1} < \infty .
\end{align*}
Next we will prove that we have $ \Vert f \vert \mathcal{N}^{N}_{u,p, \infty}(\R) \Vert^{( v , a )} = \infty $. To prove this at first we fix a large number $ l \in \mathbb{N} $ with $ l > 10  $. Then because of the disjoint supports of the involved functions we obtain
\begin{align*}
& \Big \Vert  \Big ( \int_{B(0,t)}\vert \Delta^{N}_{h}f(x) \vert^{v} dh \Big )^{\frac{1}{v}} \Big \vert  \mathcal{M}^{u}_{p}( \mathbb{R}^d) \Big \Vert^{p} \\
& \qquad \geq  \Big \Vert  \Big ( \int_{ \vert h \vert \leq \min( t , 2^{-n_{l + 1}} 2^{-rl} )} \Big ( \sum_{k = 1}^{ l - 6} a_{k} \vert \Delta^{N}_{h} ( \phi ( 2^{n_{k}} \cdot - x_{k}   ))(x) \vert \Big )^{v} dh \Big )^{\frac{1}{v}} \Big \vert  \mathcal{M}^{u}_{p}( \mathbb{R}^d) \Big \Vert^{p} .
\end{align*}
Now we use that for fixed $ h $ with $ \vert h \vert \leq \min( t , 2^{-n_{l + 1}} 2^{-rl} ) = t(r,l)  $ and $ k \in \mathbb{N} $ we have $ \supp  \Delta^{N}_{h} ( \phi ( 2^{n_{k}} \cdot - x_{k}   )) \subset   B( 0 ,  \sqrt{d} 32^{r - 2} ( 4 + N ) +  4  )    $. Because of this instead of the supremum of the Morrey quasi-norm we can choose the ball $ B( 0 ,  \sqrt{d} 32^{r - 2} ( 4 + N ) +  4  )    $. Then because of $ v \geq 1 $ we get
\begin{align*}
& \Big \Vert  \Big ( \int_{B(0,t)}\vert \Delta^{N}_{h}f(x) \vert^{v} dh \Big )^{\frac{1}{v}} \Big \vert  \mathcal{M}^{u}_{p}( \mathbb{R}^d) \Big \Vert^{p} \\
& \qquad  \geq C_{1}  \Big \Vert  \Big ( \int_{ \vert h \vert \leq t(r,l)} \Big ( \sum_{k = 1}^{ l - 6} a_{k} \vert \Delta^{N}_{h} ( \phi ( 2^{n_{k}} \cdot - x_{k}   ))(x) \vert \Big )^{v} dh \Big )^{\frac{1}{v}} \Big \vert  L_{p}( \mathbb{R}^d) \Big \Vert^{p} \\
& \qquad  \geq C_{2}  t(r,l)^{dp(\frac{1}{v}-1)} \Big \Vert   \int_{ \vert h \vert \leq t(r,l)}  \sum_{k = 1}^{ l - 6} a_{k} \vert \Delta^{N}_{h} ( \phi ( 2^{n_{k}} \cdot - x_{k}   ))(x) \vert  dh   \Big \vert  L_{p}( \mathbb{R}^d) \Big \Vert^{p} \\
& \qquad  = C_{2} t(r,l)^{dp(\frac{1}{v}-1)}  \sum_{k = 1}^{ l - 6} a_{k}^{p} \Big \Vert   \int_{ \vert h \vert \leq t(r,l)}  \vert \Delta^{N}_{2^{n_{k}} h} ( \phi (\cdot ))(2^{n_{k}}  x - x_{k} ) \vert dh  \Big \vert  L_{p}( \mathbb{R}^d) \Big \Vert^{p} \\
& \qquad  = C_{2} t(r,l)^{dp(\frac{1}{v}-1)}  \sum_{k = 1}^{ l - 6} a_{k}^{p} 2^{-n_{k} d}  \Big \Vert   \int_{ \vert h \vert \leq t(r,l)}   \vert \Delta^{N}_{2^{n_{k}} h} ( \phi (\cdot ))( x) \vert dh  \Big \vert  L_{p}( \mathbb{R}^d) \Big \Vert^{p}.
\end{align*}
Let $ \eta = \frac{h}{\vert h \vert} $ and $ \theta \in ( 0 , 1 )  $. Because of the properties of the sets $ D $ and $ \tilde{D}  $ we obtain 
\begin{align*}
& \Big \Vert  \Big ( \int_{B(0,t)}\vert \Delta^{N}_{h}f(x) \vert^{v} dh \Big )^{\frac{1}{v}} \Big \vert  \mathcal{M}^{u}_{p}( \mathbb{R}^d) \Big \Vert^{p} \\
& \;  \geq C_{2}  t(r,l)^{dp(\frac{1}{v}-1)} \sum_{k = 1}^{ l - 6} a_{k}^{p}  2^{-n_{k} d} 2^{n_{k}N p}  \Big \Vert  \int_{ \vert h \vert \leq \frac{ t(r,l)}{N}}  \Big \vert  \frac{\partial^{N}\phi}{\partial \eta^{N}} ( x + \theta N 2^{n_{k}} h ) \Big \vert  \vert h  \vert^{N} dh  \Big \vert  L_{p}( \tilde{D} ) \Big \Vert^{p} \\
& \;  \geq C_{3} t(r,l)^{dp(\frac{1}{v}-1)} \sum_{k = 1}^{ l - 6} a_{k}^{p}  2^{-n_{k} d} 2^{n_{k}N p}  \Big \Vert   \int_{ \vert h \vert \leq \frac{ t(r,l)}{N}}   \vert h  \vert^{N}  dh  \Big \vert  L_{p}( \tilde{D} ) \Big \Vert^{p} .
\end{align*}
When we use $  a_{k} = 2^{n_{k}( \frac{d}{u} - N)}  $ we get 
\begin{align*}
 \Big \Vert  \Big ( \int_{B(0,t)}\vert \Delta^{N}_{h}f(x) \vert^{v} dh \Big )^{\frac{1}{v}} \Big \vert  \mathcal{M}^{u}_{p}( \mathbb{R}^d) \Big \Vert^{p} &   \geq  C_{4} \sum_{k = 1}^{ l - 6} 2^{n_{k} p \frac{d}{u} } 2^{-n_{k}d}  t(r,l)^{N p  + d p + \frac{d p }{v} - d p}.
\end{align*}
Now because of $ r > 4  $ and $ l > 10 $ we have $  2^{-n_{l + 1}} 2^{-rl} < 1   $.  In the special case $ p = u $ this leads to
\begin{align*}
& \Big ( \Vert f \vert \mathcal{N}^{N}_{u,p, \infty}(\R) \Vert^{( v , a )} \Big )^{p} \\
& \qquad \geq C_{5} \sum_{k = 1}^{ l - 6}  2^{n_{k} p \frac{d}{u} } 2^{-n_{k}d}   \sup_{t \in [0,1]} t^{-Np - \frac{d p}{v}} \min( t ,  2^{-n_{l + 1}} 2^{-rl}  )^{N p  + d p + \frac{d p}{v} - d p}    \\
& \qquad \geq C_{5} \sum_{k = 1}^{ l - 6} 2^{n_{k} p \frac{d}{u} } 2^{-n_{k}d}      2^{( -n_{l + 1}- rl )(-N p - \frac{d p}{v})}    2^{(-n_{l + 1} - rl )(N p  + d p + \frac{d p}{v} - d p) }    \\
& \qquad = C_{5} \sum_{k = 1}^{ l - 6}  2^{n_{k} p \frac{d}{u} } 2^{-n_{k}d}     = C_{5} \sum_{k = 1}^{ l - 6} 1 = C_{5} ( l - 6 )  .
\end{align*}
So if $ l $ tends to infinity also $ \Vert f \vert \mathcal{N}^{N}_{u,p, \infty}(\R) \Vert^{( v , a )}  $ tends to infinity and the proof is complete. 
\end{proof}

\subsection{The proof of theorem \ref{MR2}}

Now we are able to prove theorem \ref{MR2}. For this purpose we have to sum up the results we obtained before.

{\bf \textit{Proof of theorem \ref{MR2}.} } To prove theorem \ref{MR2} we just have to combine the results from the propositions \ref{nec_0MR1}, \ref{nec_p1MR1}, \ref{nec_pvMR1} and \ref{nec_N>sMR1}. Then we obtain a result that is even stronger than theorem \ref{MR2}.

\section{Summary and outstanding issues}

In the theorems \ref{MR1} and \ref{thm_MR_va} we showed that under some sufficient conditions on the parameter $ s $ it is possible to describe the Besov-Morrey spaces in terms of differences. On the other hand in the propositions \ref{nec_0MR1}, \ref{nec_p1MR1}, \ref{nec_pvMR1} and \ref{nec_N>sMR1} we proved that in some cases such a characterization fails. When we compare both sides it turns out that there are still some open problems at the moment. So for $ s \in \mathbb{R}  $, $ 0 < p \leq u < \infty   $, $ 0 < q \leq \infty $, $ 0 < v \leq \infty $, $ 1 \leq a \leq \infty $ and $ N \in \mathbb{N} $ with $ N \geq s $ up to now we do not know whether we have $ \mathcal{N}^{s}_{u,p,q}(\R)  =  {\bf N}^{s, N, a}_{u, p, q, v}(\R)  $ if we are in one of the following situations. 

\begin{itemize}

\item[(i)]

We have $ s = 0 $ with $ 1 \leq a < \infty  $ and either $ p \geq 2  $ and $  q \leq 2 $ or $ 1 \leq p < 2  $ and $ q \leq p   $.

\item[(ii)]

We have $   d \frac{p}{u}  (   \frac{1}{p} - 1  ) < s \leq d  (   \frac{1}{p} - 1  ) $ with $ 0 < p < 1 $ and $ 0 < v < 1 $.

\item[(iii)]

We have $  d  \frac{p}{u}  (   \frac{1}{p} - \frac{1}{v}  ) \leq s \leq d    (   \frac{1}{p} - \frac{1}{v}  ) $ with $ v > \max(1,p) $.

\item[(iv)]

We have $ N = s  $ with $ q = \infty  $ and $ p < u  $.

\end{itemize}

\large{

{\bf Acknowledgements }

The author is funded by a Landesgraduiertenstipendium which is a scholarship from the Friedrich-Schiller university and the Free State of Thuringia. The author would like to thank his supervisor professor Winfried Sickel for his tips and hints. Moreover he would like to thank professor Dorothee D. Haroske for the nice diagram that can be found in the introduction of this paper. 

}

%&&&&&&&&&&&&&&&&&&&&&&&&&&&&&&&&&&&&&&&&&&&&&&&&
%&&&&&&&&&&&&&&&&&&&&&&&&&&&&&&&&&&&&&&&&&&&&&&&&&


\begin{thebibliography}{9999}




\begin{scriptsize}
%\begin{singlespace}

%&&&&&&&&&&&&&&&&&&&&&&&&&&&&&&&&&&&&&&&&&&&&&&&&&
%&&&&&&&&&&&&&&&&&&&&&&&&&&&&&&&&&&&&&&&&&&&&&&&&


%\bibitem{AH}
%D.R.~Adams and  L.I.~Hedberg, Function spaces and potential theory, Springer, Berlin
%1996.

%\bibitem{BL}
%{\sc J.~Bergh and J.~L\"ofstr\"om}, {\sl Interpolation spaces. An Introduction}. Springer, Berlin, 1976.

\bibitem{Be1959}
O.V. Besov, \textit{On a family of function spaces}. Embedding theorems and extensions, Dokl. Acad. Nauk SSSR \textbf{126} (1959), 1163 - 1165.

\bibitem{Be1961}
O.V. Besov, \textit{On a family of function spaces in connection with embeddings and extensions}, Trudy Mat. Inst. Steklov \textbf{60} (1961), 42 - 81.

%\bibitem{Be}
%O.~V.~Besov, On spaces of functions of smoothness zero, Mat. Sbornik  203(8)(2012), 1077-1090.

%\bibitem{BIN78}
%{\sc O.~V.~Besov, V.~P.~Il'in and S.~M.~Nikol'skij},
%Integral representations of functions and imbedding theorems.
%{V}ol. {I+II}. V.~H.~Winston \& Sons, Washington, D.C., 1978, 1979,
%Transalated from Russian,
%Scripta Series in Mathematics, edited by M.~H.~Taibleson.

%\bibitem{BIN96}
%{\sc O.~V.~Besov, V.~P.~Il'in and S.~M.~Nikol'skij},
%Integralnye predstavleniya funktsii i teoremy vlozheniya.
%Russian. Second ed., Fizmatlit ``Nauka'', Moscow, 1996.

%\bibitem{BurTar}
%V. I. Burenkov, T. V. Tararykova, Young’s inequality for convolutions in Morrey-type spaces, Eurasian Math. J., 2016, Volume 7, Number 2, 92-99.

\bibitem{ChiFra}
F. Chiarenza and M. Frasca, \textit{Morrey spaces and Hardy-Littlewood maximal function}, Rend. Math. \textbf{7} (1987), 273 - 279.

%\bibitem{CS}
%M. Christ and A. Seeger, 
%Necessary conditions 
%for vector-valued operator inequalities in harmonic analysis, Proceedings of the London Mathematical Society, 93, 2 (2006), 447-473.

%\bibitem{CSnew}
%M. Christ and A. Seeger, 
%Necessary conditions 
%for vector-valued operator inequalities in harmonic analysis, online version from https://www.math.wisc.edu/~seeger/preprints.html.
 
%\bibitem{DeV}
%R. A. DeVore and G. Lorentz, Constructive Approximation. Grundlehren der mathematischen Wissenschaften, 303. Springer - Verlag, Berlin, 1993.

%\bibitem{Di} S. Dispa, {\it Intrinsic characterizations of Besov
%spaces on Lipschitz domains}, Math. Nachr. 260 (2003), 21--33.

%\bibitem{EdTri}
%D. E. Edmunds and H. Triebel, Function Spaces, Entropy Numbers and Differential Operators, Cambridge University Press, Cambridge, 1996.

%\bibitem{FS} C. Fefferman and E.M. Stein, {\it Some maximal inequalities},  Amer. J. Math. 93 (1971),
%107--115.

%\bibitem{Fr} 
%Franke, J.,
%{\sl On the spaces $F^s_{p,q} $ of 
%Triebel-Lizorkin type: Pointwise multipliers and spaces on domains.} 
%Math. Nachr. {\bf 125} (1986), 29-68.  

%\bibitem{FJ} 
%Frazier, M., and Jawerth, B., {\sl A discrete 
%transform and decomposition of distribution spaces.}
% J. Funct. Anal. {\bf 93} (1990), 34-170.

\bibitem{HaMoSk}
D.D. Haroske, S.D. Moura and L. Skrzypczak, \textit{Smoothness Morrey Spaces of regular distributions, and some unboundedness property}, Nonlinear Anal. \textbf{139} (2016), 218 - 244. 

\bibitem{HaMoSkarXiv}
D.D. Haroske, S.D. Moura and L. Skrzypczak, \textit{Some embeddings of Morrey spaces with critical smoothness}, J Fourier Anal Appl \textbf{26}, 50 (2020).

\bibitem{HaSkCE}
D.D. Haroske and L. Skrzypczak, \textit{Continuous Embeddings of Besov-Morrey Function Spaces}, Acta Math. Sinica, Vol. \textbf{28}, No. 7 (2012), 1307 - 1328.

\bibitem{HaSkSM}
D.D. Haroske and L. Skrzypczak, \textit{Embeddings of Besov-Morrey spaces on bounded domains}, Studia Mathematica \textbf{218} (2), 2013.

\bibitem{HaSk}
D.D. Haroske and L. Skrzypczak, \textit{On Sobolev and Franke-Jawerth embeddings of smoothness Morrey spaces}, Rev. Mat. Complut., \textbf{27}(2) (2014), 541 - 573.

\bibitem{HN}
L. I. Hedberg and Y. V. Netrusov, \textit{An axiomatic approach to function
spaces, spectral synthesis, and Luzin approximation}, Mem. Amer.
Math. Soc.  \textbf{882} (2007), 97 pp.

\bibitem{Ho}
M.~Hovemann, \textit{Triebel-Lizorkin-Morrey spaces and Differences}, Math. Nachr., in press.

\bibitem{HoSi}
M. Hovemann and W. Sickel, \textit{Besov-Type Spaces and Differences}, Eurasian Math. J. \textbf{11}(1) (2020), 25-56.  

\bibitem{SiHo}
M. Hovemann and W. Sickel, \textit{Strichartz characterizations of Lizorkin-Triebel spaces}, preprint, Jena, 2019. 


%\bibitem {IzSawTan}
%T. Izumi, Y. Sawano, H. Tanaka, Littlewood-Paley theory for Morrey spaces and their preduals, Rev Mat Complut (2015), 411 - 447.

%\bibitem{Ja77} 
%Jawerth, B., {\sl Some observations on Besov and Triebel-Lizorkin spaces}.
%Math. Scand. {\bf 40} (1977), 94-104.

%\bibitem{Ka}
%G.A. Kalyabin, Theorems on extension, multipliers and diffeomorphisms for generalized Sobolev-Liouville classes in 
%domains with a Lipschitz boundary, Proc. Steklov Inst. Math. {\bf 172} (1987), 191 - 205.

\bibitem{KoYa}
H. Kozono and M. Yamazaki, \textit{Semilinear heat equations and the Navier-Stokes equation with distributions in new function spaces as initial data}, Comm. Partial Differential Equations \textbf{19} (1994), 959 - 1014.

%\bibitem{Le-Rieu}
%P. G. Lemarie-Rieusset, The Navier-Stokes Problem in the 21st Century, CRC Press, Taylor and Francis Group, Boca Raton, 2016.

\bibitem{Maz2003}
A.L. Mazzucato, \textit{Besov-Morrey spaces: function space theory and applications to non-linear PDE}, Trans. Amer. Math. Soc. \textbf{355} (2003), 1297 - 1364.

%\bibitem{maz}
%A.L.Mazzucato, Decomposition of Besov-Morrey spaces. Harmonic analysis at Mount Holyoke (South Hadley, MA, 2001), 279-294, Contemp. Math., 320, Amer. Math. Soc., Providence, RI(2003).

%\bibitem{MS1} 
%{\sc V.G.~Maz'ya and T.O.~Shaposhnikova}, {\em Theory of
%multipliers in spaces of differentiable functions}, Pitman, Boston, 1985.

%\bibitem{MS2} 
%{\sc V.G.~Maz'ya and T.O.~Shaposhnikova}, {\em Theory of Sobolev 
%multipliers with applications to differential and integral operators}, Springer, Berlin, 2009.

%\bibitem{Ne89}
%Yu.V.~Netrusov

%\bibitem{Ne1} 
%Yu.V.~Netrusov, Sets of singularities of functions in spaces of Besov and Lizorkin-Triebel
%type, Trudy Math. Inst. Steklov, {\bf 187} (1990), 185-203.

\bibitem{Ni} S.M. Nikol'skij, \textit{Approximation of Functions of Several Variables and Imbedding Theorems}, Springer Verlag, Berlin, 1975.

\bibitem{Ni2}
S.M. Nikol'skij, \textit{Inequalities for entire analytic functions of finite order and their application to the theory of differentiable functions of several variables}, Trudy Mat. Inst. Steklov \textbf{38} (1951), 244 - 278. 

\bibitem{Os}
P. Oswald. Manuscript. Personally communicated in 2019.


%\bibitem{Pe} J. Peetre, {On spaces of Triebel-Lizorkin type}, Ark. Mat. 13 (1975), 123--130.

%\bibitem{PS} 
%M.~Prats and E.~Saksman,  
%A $T(1)$ theorem for fractional Sobolev spaces on domains, J. Geom. Anal  27 (2017), 2490 - 2538.

\bibitem{Ros}
M. Rosenthal, \textit{Local means, wavelet bases, representations, and isomorphisms in Besov-Morrey and Triebel-Lizorkin-Morrey spaces}, Math. Nachr. \textbf{286} (2013), 59 - 87.

%\bibitem{Ru}
%W.~Rudin,  Functional Analysis, McGraw-Hill Book Company, New York, 1973.

\bibitem{RS}
T. Runst and W. Sickel, 
\textit{Sobolev spaces of fractional order, Nemytskij operators,
and nonlinear partial differential equations},
de Gruyter Series in Nonlinear Analysis and Applications 3,
Walter de Gruyter \& Co., Berlin, 1996.

\bibitem{Saw}
Y. Sawano, \textit{Besov-Morrey spaces and Triebel-Lizorkin-Morrey spaces on domains}, Math.
Nachr., \textbf{283} (2010), 1456 - 1487.

\bibitem{SawBook}
Y. Sawano, \textit{Theory of Besov Spaces}, Springer, Singapore, 2018.

\bibitem{SawTan}
Y. Sawano and H. Tanaka, \textit{Decompositions of Besov-Morrey spaces and Triebel-Lizorkin-Morrey spaces}, Math. Z. \textbf{257} (2007), 871 - 905.

%\bibitem{Sc}
%C.~Schneider, On dilation operators in Besov spaces, Rev. Mat. Complut. {\bf 22} (2009), no. 1, 111-128.

%\bibitem{SV}
%C.~Schneider and J.~Vybiral,
%On dilation operators in Triebel-Lizorkin spaces, Funct. Approx. Comment. Math.  41 (2009), part 2, 139-162. 

%\bibitem{See}
%A.~Seeger, A note on Triebel-Lizorkin spaces, In: Approximation and Function Spaces, Banach Center
%Publ. {\bf 22}, PWN-Polish Sci. Publ., Warszawa, 1989, 391 - 400.

\bibitem{Si}
W. Sickel, \textit{On Pointwise Multipliers for $ F^{s}_{p,q}(\mathbb{R}^{n}) $ in Case $ \sigma_{p,q} < s < n/p   $.}, Ann. Mat. Pura Appl. (4) \textbf{176} (1999), 209 - 250.


\bibitem{sitr} 
W. Sickel and H. Triebel, \textit{H\"older inequalities and sharp embeddings
in function spaces of $B^s_{p,q}$ and $F^s_{p,q}$ type}, 
Z. Anal. Anwendungen, \textbf{14} (1995), 105 - 140.

%\bibitem{StSh}
%E. M. Stein and R. Shakarchi, Real Analysis: Measure Theory, Integration, and Hilbert Spaces, Princeton Lectures in Analysis, Vol. III, Princeton Univ. Press, Princeton and Oxford, 2005. 

%\bibitem{StWe} E.M. Stein and G. Weiss, {\it Introduction to Fourier analysis on Euclidean
%spaces}, Princeton Univ. Press, Princeton, 1971.

%\bibitem{Str} 
%R.S.~Strichartz, Multipliers on fractional
%Sobolev spaces, J. Math. Mechanics  16 (1967), 1031-1060.

%\bibitem{StTo} J.-O. Str{\"o}mberg and A. Torchinsky, {\it Weighted Hardy
%spaces}, Lecture notes in mathematics 1381, Springer, Berlin, 1989.

\bibitem{TangXu}
L. Tang and J. Xu, \textit{Some properties of Morrey type Besov-Triebel spaces}, Math. Nachr. \textbf{278} (2005), 904 - 917. 

\bibitem{Tr83}
H. Triebel,  \textit{Theory of Function Spaces}, Birkh{\"a}user, Basel, 1983.

%\bibitem{Tr83r}
%H. Triebel, {\it Theory of Function Spaces}, (russian translation), Nauka, Moscow, 1985.

\bibitem{Tr92}
H. Triebel,  \textit{Theory of Function Spaces II}, Birkh{\"a}user, Basel,
1992.

\bibitem{Tr06} H. Triebel,  \textit{Theory of Function Spaces III}, Birkh\"auser, Basel, 2006.

%\bibitem{Tr13} H. Triebel, Local Function Spaces, Heat and Navier-Stokes Equations, in: EMS Tracts in Mathematics, vol. 20, EMS, Z\"urich, 2013.

%\bibitem{Vy}
%J.~Vybíral, Dilation operators and samping numbers, J. Funct. Spaces
%Appl. {\bf 6} (2008), 17-46.

%\bibitem{Ya} 
%M.~Yamazaki, {A quasi-homogeneous version of
%paradifferential operators, I: Boundedness on spaces of Besov type.}
%{\it J. Fac. Sci. Univ. Tokyo, Sect. IA Math.} {\bf 33} (1986), 131-174.
%{II: A Symbolic calculus.} {\it ibidem} {\bf 33} (1986), 311-345.

%\bibitem{yos} K. Yosida, {\itshape Functional Analysis}, Springer Verlag, Berlin, 1975

%\bibitem{yy1}
%D.~Yang, W.~Yuan, A new class of function spaces connecting
%Triebel-Lizorkin spaces and $Q$ spaces,
%J. Funct. Anal. 255 (2008), 2760-2809.

\bibitem{yyMF}
D. Yang and W. Yuan, \textit{Characterizations of Besov-type and Triebel-Lizorkin-type spaces via maximal functions and local means}, Nonlinear Anal. \textbf{73} (2010), 3805 - 3820.


%\bibitem{yy2}
%D.~Yang, W.~Yuan, New Besov-type spaces and Triebel-Lizorkin-type
%spaces including $Q$ spaces,
%Math. Z. 265 (2010), 451-480.

\bibitem{ysy} 
W.~Yuan, W.~Sickel and D.~Yang, \textit{Morrey and Campanato Meet Besov, Lizorkin and Triebel}, Lecture Notes
in Mathematics Vol. 2005, Springer, Berlin, 2010.

%\bibitem{YSY2}
%W. Yuan, W. Sickel and D. Yang, On the coincidence of certain approaches to smoothness spaces related to Morrey spaces, Math. Nachr. 286 (2013), 1571 - 1584.

\end{scriptsize}
%\end{singlespace}

\end{thebibliography}
\end{document}